\documentclass{amsart}
\usepackage{amssymb,amsmath,latexsym,amsthm}
\usepackage[english]{babel}

\usepackage{amsmath,amssymb,amsbsy,amsfonts,amsthm,latexsym,
                       amsopn,amstext,amsxtra,euscript,amscd}
\usepackage{hyperref}
\usepackage{array,color}
\usepackage{ifthen}
\usepackage{url}

\newtheorem{thm}{Theorem}

\newtheorem{lem}[thm]{Lemma}

\newtheorem{rmq}[thm]{Remark}

\begin{document}

\title[quantitative growth of linear recurrences]{ quantitative growth of linear recurrences}

\author[Armand Noubissie]{Armand Noubissie}
\address{
{Graz University of Technology}\newline
{Institute for Analysis and Number Theory}\newline
{M\"{u}nzgrabenstrasse 36/II, 8010 Graz, Austria}}
\email{\tt armand.noubissie@tugraz.at}

\subjclass[2020]{11B37, 11J25
, 11H06, 11D61}
\keywords{recurrence, Diophantine equations, geometry of numbers.}

\date{\today}

\maketitle

\begin{abstract}
Let $\{u_n\}_n$ be a non-degenerate linear recurrence sequence of integers with Binet's formula given by $u_n= \sum_{i=1}^{m} P_i(n)\alpha_i^n.$  Assume $\max_i \vert \alpha_i \vert >1$. In 1977, Loxton and Van der Poorten  conjectured that for any $\epsilon >0$ there is a effectively computable constant $C(\epsilon),$ such that if $ \vert u_n \vert < (\max_i\{ \vert \alpha_i  \vert \})^{n(1-\epsilon)}$, then $n<C(\epsilon)$. Using results of Schmidt and  Evertse, a complete non-effective (qualitative) proof of this conjecture was given by  Fuchs and Heintze (2021) and, 
independently, by Karimov  and al.~(2023). In this paper, we give an effective upper bound for the number of solutions of 
the inequality $\vert u_n \vert < (\max_i\{ \vert \alpha_i  \vert \})^{n(1-\epsilon)}$, 
thus extending several earlier results by
Schmidt, Schlickewei and Van der Poorten.

\end{abstract}
\section{Introduction}\label{sec1}
 A  sequence of integer numbers $\{u_n\}$ is a linear recurrence sequence  of order $l$  if it is defined by the recursion  relation
 \begin{equation} \label{eq:a}
 u_{n+l}= a_1u_{n+l-1}+ a_2u_{n+l-2}+ \cdots  + a_lu_{n}, \quad (n \in \mathbb{N}),
 \end{equation} 
where the coefficients $a_1, \cdots, a_l$ are complex numbers and $l$  minimal. It is well-known that we can write
$$
 u_n = \sum_{i=1}^{m} P_i(n) \alpha_i^n, 
$$
where $\alpha_i$ are the roots of the characteristic polynomial $P$ of $\{u_n\}$ and the coefficients of polynomials $P_i$ belong to $K:= \mathbb{Q}(\alpha_1, \cdots, \alpha_m)$ and $d$ the degree of the number field $K$ over $\mathbb{Q}$. One says that such a sequence is non-degenerate if none of the ratios $\alpha_i/\alpha_j ~(i \neq j)$  is a root of unity. Let  $\alpha_1$  be a dominant root of $P$. We denote by $s$  the cardinality of the set  containing all the prime ideals above $\alpha_i's$ and  all the archimedean  places over $K$. Here, we are interested in the study of the exponential Diophantine equation $u_n=0$.  Since any linear recurrence sequence can be decomposed as the interleaving of non-degenerate sequences, we study the previous equation only for the non-degenerate sequences. This problem was first studied by Skolem, Mahler, Lech, who used $p$-adic analysis to prove that this equation has finitely many solutions.  Their proof is  ineffective  in the sense that it does not allow one to bound the number of solutions. Van der Poorten and Schlickewei \cite{VS} gave an upper bound for the number of 
solutions  depending only  on $d, l$ and $s$. This result was extended by Schlickewei and Schmidt (\cite[Thm.\,2.1]{SS}), who provided an upper bound depending only on $d$ and $l$ and Schmidt \cite{S} an upper bound depending only on $l$.\\

We now turn to a more  general question concerning the rate of growth of the recurrence. It is not difficult to see  there is an effectively computable
constant $C$ such that, for all
$n \geq 1, ~~ \vert u_n \vert < C n^a \vert \alpha_1 \vert^n,$ where $\alpha_1$ is a dominant root of the characteristic polynomial of  sequence $\{u_n\}$ with $a$,  the maximum of  the multiplicity of $\alpha_i's$.
In 1977, Loxton and Van der Poorten \cite{LV} conjectured 
that any non-degenerate LRS (linear recurrence sequences) has essentially, the maximal possible
growth rate, i.e., for any $\epsilon >0$ there is an effectively computable constant $C(\epsilon),$ such that if $ \vert u_n \vert < (\max_i\{ \vert \alpha_i  \vert \})^{n(1-\epsilon)}$, then $n<C(\epsilon)$. Using results of Schmidt \cite{S} and Evertse \cite{E1}, a complete non-effective (qualitative) proof of this conjecture was given by  Fuchs and Heintze \cite{FH} and, 
independently, by Karimov  and al. \cite{K}.  \\

This question about the rate of growth of LRS is connected to the
Positivity and Ultimate Positivity Problems (in theoretical computer science). In fact Ouaknine and Worrell \cite{OW3}   have proven that for simple linear recurrence sequence,  the Ultimate Positivity  Problem is decidable for all order. Their algorithm arises from model theory and the qualitative version of Loxton - Van der poorten  conjecture above. The non-effectiveness of this result makes that algorithm non-constructive in the sense that, for a given linear recurrence sequence $\{u_n\}$, the procedure does not produce a threshold $N$ such that $u_n \geq 0$ for all $n \geq N$. This is why the positivity problem for simple linear recurrence sequence is still open.\\

In this article, we make a progress in that direction by giving an explicit  upper bound for the number of solutions. \\
Let $\{u_n\}$ be the  non-degenerate linear recurrence sequence of integers  of order $l \geq 1$ defined by the recursion relation \eqref{eq:a} with Binet's formula given by  $$
 u_n = \sum_{i=1}^{m} P_i(n) \alpha_i^n, 
$$
where $\alpha_i$ are the roots of the characteristic polynomial $P$ of $\{u_n\}$ with multiplicity at most $a$;  the coefficients $a_{ij}$ of polynomials $P_i$ belong to $K:= \mathbb{Q}(\alpha_1, \cdots, \alpha_m)$ and $d$ the degree of the number field $K$ over $\mathbb{Q}$.  Assume $\alpha_1$  is a dominant root of $P$ and $\vert \alpha_1 \vert >1$. We denote by $S$   the set  containing all the prime ideals above $\alpha_i's$ and  all the archimedean  places over $K$ and $s$ its cardinality.   Let  $n_{ij}$ be the smallest positive integer such that $n_{ij}a_{ij}  \in \mathcal{O}_K$  and we denote by $\rho := {\rm lcm}(n_{ij})$.  Put
\begin{equation} \label{eq:b}
   A':= \min_{\sigma, i, j} \{ 1, ~\vert \sigma(\rho a_{ij})\vert,~~\sigma(a_{ij}) \neq 0  \}, \quad A := \max_{\sigma, i,j} \{ 1, ~\vert \sigma(\rho a_{ij})\vert \} 
\end{equation}  
   where the minimum and the maximum is taken respectively into all elements of ${\rm Gal}(K/ \mathbb{Q})$  and coefficients of $\rho P_i$.   Let $\Delta_K$ be the discriminant of $K$. For  $x \in \mathbb{R}^{*}$, we define the functions $g(x)=2^{34}\cdot 2^{11l}s^2l^{12}x^{-5}$ and 
 
 $$
\tau(x) = 
	 \max \biggl\{ \dfrac{20a^lA^l}{ A'} ,~ \dfrac{-\log(A')}{\epsilon \log(\vert \alpha_1 \vert)},~ \dfrac{11mA + e^{g(x)}\log(l!)\log(2d\vert \Delta_K \vert)}{x\log \vert \alpha_1 \vert}  \biggl\}.
$$
Notice that $\rho =1$ is equivalent to, all the coefficients of the polynomials $P_i$ are algebraic integers. Hence our main result can be state as following 
\\
\begin{thm} \label{thm}
For each $0<\epsilon < 1/ 12,$  the number of solutions of the inequality 
\begin{equation} \label{eq:1}
  \vert u_n \vert <  \vert \alpha_1  \vert ^{n(1-\epsilon)} 
 \end{equation} 
 does not exceed                                                     \begin{itemize}
 \item  $   \tau(\epsilon / 2d) + 2\cdot s^2 \left(  d^2l^{7} 2^{4l+19} \epsilon ^{-2}\right)^{ls+l} \exp \left( (7l^a)^{8l^a}\right),$ if $\rho =1$ and \\
 \item  $ \dfrac{2\epsilon^{-1} \log \rho}{\log \vert \alpha_1 \vert}  +  \tau(\epsilon / 4d) + 2\cdot  s^2 \left( d^2 l^{7} 2^{4l+21} \epsilon ^{-2}\right)^{ls+l} \exp \left( (7l^a)^{8l^a}\right),$  if  $\rho \neq 1.$ 	
 \end{itemize}
\end{thm}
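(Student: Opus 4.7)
The plan is to follow the qualitative strategy of Fuchs--Heintze \cite{FH} and Karimov et al.\ \cite{K}, but to track every constant explicitly, combining the quantitative (absolute) Subspace Theorem of Evertse--Schlickewei--Schmidt with Matveev's effective lower bound for linear forms in logarithms. The Subspace Theorem will handle the ``generic'' solutions and produce the large combinatorial summand, while Matveev's theorem will resolve the ``degenerate'' solutions, which by non-degeneracy must satisfy an auxiliary algebraic identity among the Binet components; its effective character is what produces the $\tau$-summand.

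\emph{Setup and Subspace Theorem step.} First I would multiply through by $\rho$ to clear denominators, writing $\rho u_n = \sum_{i,j}(\rho a_{ij})\, n^j \alpha_i^n$ with $\rho a_{ij} \in \mathcal{O}_K$, and I would pass to the vector
\[
\mathbf{x}_n = \bigl(\rho a_{ij}\, n^j \alpha_i^n\bigr)_{i,j} \in K^N, \qquad N \leq ml.
\]
The hypothesis $|u_n| < |\alpha_1|^{n(1-\epsilon)}$ expresses that the sum-of-coordinates form evaluated at $\mathbf{x}_n$ is small at the distinguished archimedean place, while at every other place $v\in S$ the coordinate sizes are controlled by the definitions of $A$ and $A'$ together with the $S$-unit structure of the $\alpha_i$, the polynomial factor $n^j$ contributing only to the archimedean height. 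Applying the quantitative Subspace Theorem to the sum-of-coordinates form at the archimedean place and the coordinate forms at the remaining places of $S$ then bounds the number of admissible $\mathbf{x}_n$ lying outside a finite union of exceptional proper subspaces by $2 s^2 (d^2 l^7 2^{4l+19}\epsilon^{-2})^{ls+l} \exp((7l^a)^{8l^a})$, with the combinatorial factors drawn from Evertse's quantitative $S$-unit bound and the counting of exceptional subspaces.

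\emph{Effective resolution of the exceptional subspaces.} For every $\mathbf{x}_n$ lying on one of these proper subspaces I would extract a non-trivial linear relation among the monomials $n^j \alpha_i^n$. Grouping by the index $i$, this forces an identity $\sum_{i\in I} Q_i(n)\alpha_i^n = 0$ with $|I|\geq 2$ and non-zero polynomials $Q_i$, since by non-degeneracy no ratio $\alpha_i/\alpha_j$ is a root of unity. Applying Matveev's effective lower bound to the linear form in logarithms extracted from such an identity would then bound the admissible $n$ by the third entry of $\tau(\epsilon/2d)$: the factor $\exp(g(\epsilon/2d))$ with $g(x) = 2^{34}\cdot 2^{11l}s^2 l^{12} x^{-5}$ reflects the standard doubly exponential dependence of Baker-type bounds on the accuracy parameter, and $\log(l!)\log(2d|\Delta_K|)$ records the dependence on the field $K$. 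The first two entries of $\tau$ enforce the baseline inequalities that the dominant term really dominates (once $n > 20 a^l A^l / A'$) and that the factor $A'$ is negligible compared to $|\alpha_1|^{-n\epsilon}$ (once $n > -\log(A')/(\epsilon\log|\alpha_1|)$), both of which are needed for the preceding estimates to apply.

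\emph{Main obstacle and the case $\rho \neq 1$.} The hardest part will be the interplay between the polynomial factors $n^j$, which inflate the height of $\mathbf{x}_n$ polynomially in $n$, and the exponential saving $|\alpha_1|^{-n\epsilon}$ that the Subspace Theorem requires: the Subspace Theorem hypotheses become effective only once the exponential decay overwhelms the polynomial height inflation, and arranging this with fully explicit constants is what forces the cut-off $n > \tau(\epsilon/2d)$ and fixes its precise shape. A secondary difficulty is the case $\rho \neq 1$: absorbing the extra factor $\rho$ on the right-hand side of the inequality effectively replaces $\epsilon$ by $\epsilon - \log\rho/(n\log|\alpha_1|)$, which, after discarding the $n$ satisfying $n < 2\epsilon^{-1}\log\rho/\log|\alpha_1|$, contributes the leading additive term of the second bound and forces the refined cut-off $\epsilon/4d$ inside $\tau$. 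Packaging the constants from Matveev's and Evertse's theorems into the explicit closed form claimed in the statement is then a matter of careful bookkeeping.
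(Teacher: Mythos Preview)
Your proposal has a genuine gap in the ``effective resolution of the exceptional subspaces'' step. The relation you extract from a proper subspace, $\sum_{i\in I} Q_i(n)\alpha_i^n = 0$ with $|I|\geq 2$, is \emph{not} a linear form in logarithms; it is itself the vanishing of a non-degenerate linear recurrence of order possibly as large as $l-1$. Matveev's theorem gives nothing here once $|I|\geq 3$ (and even for $|I|=2$ with genuine polynomial coefficients the reduction is not immediate). No effective Baker-type bound on $n$ is known for such equations---this is precisely the open Skolem problem. The paper handles these equations instead with Schmidt's zero-multiplicity bound (Lemma~\ref{lemm}), which says only that each such equation has at most $\exp((7l^a)^{8l^a})$ solutions in $n$; this is where that factor in the final bound actually comes from, as a \emph{count} per subspace rather than a bound on $n$.

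Correspondingly, your reading of $\tau$ is inverted. The threshold $\tau(\epsilon/2d)$ is not the output of a Baker step on the degenerate solutions; it is the input threshold above which the quantitative Subspace machinery (Theorems~\ref{thm1}, \ref{thm2}, \ref{thm3}) applies at all. The term $e^{g(x)}$ with $g(x)=2^{34}\cdot 2^{11l}s^2 l^{12}x^{-5}$ arises from the Roth--Schmidt polynomial construction in Theorem~\ref{thm3}, specifically the requirement $Q>(9m!)^{e^{2C''}}$ with $C''=2^{11m+34}s^2\epsilon^{-4}m^{12}$, not from linear forms in logarithms. Solutions with $n\leq\tau(\epsilon/2d)$ are simply counted trivially by $\tau(\epsilon/2d)$; solutions with $n>\tau(\epsilon/2d)$ are shown to satisfy the product inequality \eqref{eq:2} (with $\epsilon/2d$), hence lie in at most $2s^2(d^2 l^7 2^{4l+19}\epsilon^{-2})^{ls+l}$ subspaces by Theorem~\ref{thm1}, and then Schmidt's lemma is multiplied in. Your treatment of the $\rho\neq 1$ reduction is correct and matches the paper.
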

The following result due by Schmidt \cite{S} plays a crucial role in the proof of Theorem \ref{thm}.
\begin{lem}\label{lemm}
Suppose that $\{u_n\}_{n \in \mathbb{Z}}$ is a non-degenerate linear recurrence sequence of complex numbers, whose characteristic polynomial has $k$ distinct roots of  multiplicity at most $a$. Then the number of solutions $n \in Z$ of the equation $u_n=0$ can be bounded above by $$c(k, a) = \exp((7k^a)^{8k^a}).$$
\end{lem}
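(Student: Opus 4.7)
The approach would follow Schmidt's strategy of reducing the zero equation $u_n=0$ to a counting problem for $S$-unit equations and then applying quantitative estimates of Evertse--Schlickewei--Schmidt type. The first step is to expand Binet's formula as
$$u_n = \sum_{i=1}^{k} \sum_{j=0}^{a-1} c_{ij}\, n^{j} \alpha_i^{n} = 0,$$
exhibiting the zeros as vanishing linear combinations of at most $N \leq ka$ monomials of the form $n^{j}\alpha_i^{n}$. After passing to a suitable number field $K$ containing all $\alpha_i$ and $c_{ij}$, and enlarging a finite place set $S$ so that each $\alpha_i$ becomes an $S$-unit, every term appears as a polynomial-in-$n$ multiple of an $S$-unit raised to the power $n$.

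Next I would use the classical decomposition into \emph{minimal vanishing subsums}: for each solution $n$, there is a partition of the index set $\{(i,j)\}$ into blocks such that each block-sum vanishes and no proper sub-block-sum does. Because the number of set partitions of an $N$-element set depends only on $N \leq ka$, it suffices to bound the number of $n$ giving rise to a fixed partition type. On a single minimal block of size $r$, dividing by a distinguished term yields an equation
$$1 + \sum_{\nu=1}^{r-1} \gamma_\nu\, n^{e_\nu}\, \beta_\nu^{n} = 0,$$
with $\beta_\nu = \alpha_{i_\nu}/\alpha_{i_0} \in \mathcal{O}_S^\times$ not a root of unity (by non-degeneracy), and with no proper subsum of the left-hand side vanishing.

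The hard part is the polynomial prefactor $n^{e_\nu}$, which obstructs a direct application of the Evertse--Schlickewei--Schmidt bound for non-vanishing $S$-unit equations. I would handle this by combining a gap principle with a specialization argument: outside a controllable range of $n$, the exponential terms $\beta_\nu^{n}$ dominate asymptotically, so one can eliminate the $n^{e_\nu}$ factors via a Vandermonde-type identity applied to several consecutive candidate solutions, reducing the equation to a pure $S$-unit equation to which Evertse's bound applies directly; inside the range, a direct counting argument suffices and depends only on $k$ and $a$. Applying the quantitative Evertse bound to each of the at most $k$ blocks, iterating over the $O(a)$ polynomial-degree levels, and summing over the $(ka)^{O(ka)}$ partition types yields a tower of exponentials. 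Careful tracking of the constants, as in Schmidt's original treatment, produces the stated bound $\exp((7k^a)^{8k^a})$: the exponent $k^a$ corresponds to the maximal number of effective ``monomial types'' $n^{j}\alpha_i^{n}$, while the outer double exponential encodes the two layers of counting — the combinatorics of the minimal-vanishing-subsum partitions on the outside, and the recursive application of the quantitative unit-equation bound inside each block.
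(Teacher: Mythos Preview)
The paper does not prove this lemma at all: it is stated as a result ``due by Schmidt'' with a citation to \cite{S} (W.~M.~Schmidt, \emph{The zero multiplicity of linear recurrence sequences}, Acta Math.~\textbf{182} (1999), 243--282), and is then used as a black box in the deduction of Theorem~\ref{thm} from Theorem~\ref{thm1}. So there is nothing in the paper's own argument to compare your proposal against.

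As a sketch of Schmidt's actual proof, your outline has the right large-scale shape (vanishing subsums, reduction to unit-type equations, bounds depending only on the number of terms) but two points are off. First, your reading of the constant is wrong: the number of monomials $n^{j}\alpha_i^{n}$ is at most $ka$, not $k^{a}$, so $k^{a}$ cannot be ``the maximal number of effective monomial types''. In Schmidt's argument the quantity that drives the double exponential is the order of the recurrence, and the form $\exp((7k^{a})^{8k^{a}})$ here is simply a (generous) specialization of his order-only bound. Second, the step you describe for eliminating the polynomial prefactors --- a Vandermonde identity on consecutive solutions feeding into Evertse's $S$-unit bound --- is not how Schmidt proceeds and would not by itself yield a bound independent of the field, the place set $S$, and the coefficients. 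Schmidt's uniformity comes from his own absolute version of the quantitative subspace theorem combined with a specialization argument that removes all dependence on the ambient data; a direct appeal to the Evertse(--Schlickewei--Schmidt) $S$-unit bound would leave $d$ and $|S|$ in the answer. If you want to flesh this out, you should either cite Schmidt as the paper does, or follow his actual machinery rather than the shortcut you propose.
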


\begin{rmq}
An effective upper bound for the number of solutions of inequality \eqref{eq:1}  can be obtained by "carefully" combining a variant of the main Theorem in \cite{E} and Lemma \ref{lemm}. However, our Theorem \ref{thm} provide a better upper bound.
\end{rmq}
If the characteristic polynomial of $\{u_n\}_n$ has just one root i.e. $m=1$,  then the inequality  \eqref{eq:1} becomes $$ \vert P_1(n)\alpha_1^n \vert  < ( \vert \alpha_1  \vert )^{n(1-\epsilon)} $$ which implies $ \vert P_1(n)\alpha_1^{n\epsilon} \vert  < 1$. If $n  >  \dfrac{2(aA+1)}{A'},$ where $A$ and $A'$ are defined in \eqref{eq:b},  then one get $\vert P_i(n) \vert >  A'$. So  $A' \vert \alpha_1^{n\epsilon} \vert  < 1$. Hence we conclude that any non-negative integer $n$ solution of the inequality  \eqref{eq:1} satisfy the relation
$$n <  \max \left( \dfrac{2(aA+1)}{A'} , ~ \dfrac{-\log(A')}{\epsilon \log(\vert \alpha_1 \vert)} \right).$$

 In the rest of the article we assume $m>1$.
  Let  $M_{K}$ be the collection of places over $K$. For $v \in M_K$,  $x \in K$, we define $\vert x \vert_v$ as follows:
  \begin{itemize}
  \item $\vert x \vert_v = \vert \sigma (x) \vert ^{1/d}$ if $v$ corresponds to the embedding  $\sigma\,:\, K \rightarrow  \mathbb{R}$\\
   \item $\vert x \vert_v = \vert \sigma (x) \vert^{2/d}$ if $v$ corresponds to the embedding  $\sigma\,:\, K \rightarrow  \mathbb{C}$\\
   \item $\vert x \vert_v = (N_{K/\mathbb{Q}}(\mathcal{P}))^{-{\rm ord}_{\mathcal{P}}(x)/d}$ if $v$ corresponds to the prime ideal $\mathcal{P}$ of  $\mathcal{O}_K$ and  ${\rm ord}_{\mathcal{P}}(x)$ the exponent of  $\mathcal{P}$ in the decomposition of the ideal generated by $x$. 
 \end{itemize}
 
 With this norm, we have the product formula $$ \prod_{ v \in M_K}^{} \vert x \vert_v =1, \quad  \mbox{for} \quad  x \in K.$$ For $X=(x_1, \cdots, x_m) \in K^m$, $X \neq 0$ and $\vert X \vert_v = \max \left( \vert x_1\vert_v, \cdots, \vert x_m \vert_v \right)$, we define the height of $X$  as follows:
  $$H(X)= \prod_{ v \in M_K}^{} \vert X \vert_v .$$ Note that $H(X)$ depends only on $X$ and not on the choice of the number field $K$ (see Lemma 1.5.2 in \cite{BV}). Let $K$ be the splitting field of the characteristic polynomial of $\{u_n\}$ and $S$ be the set  containing all the prime ideals above $\alpha_i's$ and  all the archimedean  places over $K$ and $s$ its cardinality.  The ring of $S$-integers and its set of unit are respectively defined as  $$\mathcal{O}_S= \{x \in K~:~ \vert x \vert_v \leq 1 \quad \mbox{for} ~~\mbox{all} \quad v\notin S\}$$ and $$\mathcal{O}_S^*= \{x \in K~:~ \vert x \vert_v = 1 \quad \mbox{for} ~~\mbox{all} \quad v\notin S\}.$$  We denote  by ${\rm id}$  the embedding over $K$ corresponding to the identity and for $v \in M_K$, the quantity $s(v)$ by
   \begin{equation*}
s(v) = \left\{
\begin{aligned}
	  \dfrac{1}{d}  & \quad \mbox{if}\quad v ~~ \mbox{is} ~~\mbox{real}~~\mbox{infinite}, \\\\
	\dfrac{2}{d}  & \quad \mbox{if}\quad v ~~ \mbox{is} ~~\mbox{complex}~~\mbox{infinite},\\\\
	0  & \quad \mbox{if}\quad v ~~ \mbox{is} ~~\mbox{finite},
\end{aligned}
\right.
\end{equation*}
 where $d$ is the degree of $K$ over $\mathbb{Q}$.
By the definition of $s(v)$, we get 
\begin{equation} \label{eq:d}
\sum_{v \in M_K}^{} s(v) = 1.
\end{equation}
For $v \in S \setminus \{{\rm id}\}$, we define $m$ linear 
  forms $l_{1v}, \cdots, l_{mv}$ with $m$ variables $x:=(x_1, \cdots, x_m)$ over $K$ as  $l_{iv}(x)=x_i$. For $v={\rm id},$ we define $l_{1v} (x) = x_1+\cdots + x_m$ and $l_{iv}(x)=x_i$ for $i=2,\cdots, m.$ 
  
 Notice that, to proving Theorem \ref{thm}, it  suffice to show only the first part (i.e. case $\rho =1$ meaning all the coefficients of the polynomials $P_i$ are algebraic integers)  since the second part follows easily from the first one.  Indeed, assume $\rho \neq 1$ and fix $\epsilon >0$. It is clear that all the solutions of inequality \eqref{eq:1} are solutions of the inequality   $$\vert \rho u_n \vert < \dfrac{\rho}{\vert \alpha_1  \vert ^{n\epsilon / 2} }\vert \alpha_1  \vert ^{n(1-\epsilon/2)}.$$ If $\rho > \vert \alpha_1  \vert ^{n\epsilon / 2}$ then we obtain $$n< \dfrac{2\epsilon^{-1} \log \rho}{\log \vert \alpha_1 \vert}.$$ Otherwise, one obtains $\vert \rho u_n \vert < (\vert \alpha_1  \vert )^{n(1-\epsilon/2)}$.  Since  the coefficients of the polynomials $\rho P_i$ in the Binet's formula  of the non-degenerate sequence $v_n :=  \rho u_n $ are all algebraic integers, then the desired result  follows by applying the first case of the Theorem \ref{thm} with $\epsilon / 2$ instead of $\epsilon$. 
 
 Thus, in the rest of the memoir, we will assume $\rho = 1$ meaning that  all the coefficients of the polynomials $P_i$ are algebraic integers.  Put
$$\mathcal{N} := \{(P_1(n)\alpha_1^n, \cdots,  P_m(n)\alpha_m^n), ~~~ n \in \mathbb{N}\}$$ and $X_n = ( P_1(n)\alpha_1^n, \cdots,  P_m(n)\alpha_m^n)$.
 Our main result is a consequence of the following theorem.
  
  \begin{thm} \label{thm1}
For each $0<\epsilon < 1/ 12$,  the set  of  $X_n \in \mathcal{N} $ with $n> \tau(\epsilon)$ solutions of the inequality 
\begin{equation} \label{eq:2}
 \prod_{ v \in S}^{} \prod_{i=1}^{m} \vert l_{iv}(X_n)\vert_v <  \vert \alpha_1  \vert ^{-n\epsilon} 
 \end{equation} 
 is contained in at most  $$2 \cdot  s^2 ( m^{7} 2^{4m+17} \epsilon ^{-2})^{ms+m}$$  proper subspaces in $K^m$.
 \end{thm}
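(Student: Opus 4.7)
The plan is to apply a quantitative version of the Subspace Theorem (of Evertse--Schlickewei--Schmidt type) to the $ms$ linear forms $\{l_{iv}\}_{v \in S,\, 1 \le i \le m}$ introduced above, with the points $X_n = (P_1(n)\alpha_1^n,\dots,P_m(n)\alpha_m^n)$, $n > \tau(\epsilon)$, as the candidate solutions. First I would verify the hypotheses: at each $v \in S$ the forms $l_{1v},\dots,l_{mv}$ are linearly independent, since for $v \ne \mathrm{id}$ they are just the coordinate functions $x_i$, while at $v=\mathrm{id}$ the coefficient matrix is upper triangular with $1$'s on the diagonal. As every coefficient of each $l_{iv}$ lies in $\{0,1\}$, the heights of the forms contribute only a harmless absolute constant to the quantitative bound.

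The critical step is to compare $\prod_{v\in S}\prod_i|l_{iv}(X_n)|_v$ to a power of the absolute Weil height $H(X_n)$. Because $\rho=1$, every coordinate $P_i(n)\alpha_i^n$ is an $S$-integer, so $|X_n|_v\leq 1$ for $v\notin S$ and therefore $H(X_n)=\prod_{v\in S}|X_n|_v$. At the identity place, $|P_1(n)\alpha_1^n|^{1/d}$ is comparable to $|\alpha_1|^{n/d}$, provided $n$ is large enough that $|P_1(n)|\gtrsim A'$; this is exactly what the first two entries in the $\max$ defining $\tau$ guarantee. At the other places one bounds $|X_n|_v$ using the constant $A$ together with the fact that the $\alpha_i$ are $S$-units, so that $\prod_{v\in S}\max_i|\alpha_i|_v^n = H(\alpha_1,\ldots,\alpha_m)^n$. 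These estimates combine to give
$$c_1\,|\alpha_1|^{n/d}\ \le\ H(X_n)\ \le\ c_2\,n^{as}\,|\alpha_1|^n$$
as soon as $n$ exceeds the first two entries of $\tau(\epsilon)$.

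With this in hand the assumed inequality rewrites as
$$\prod_{v\in S}\prod_{i=1}^m |l_{iv}(X_n)|_v\ <\ |\alpha_1|^{-n\epsilon}\ \le\ H(X_n)^{-\delta}$$
for some positive $\delta$ effectively proportional to $\epsilon$ (morally $\delta=\epsilon/d$ up to absolute constants). I would then invoke Evertse's quantitative Subspace Theorem to conclude that all such $X_n$ lie in the union of at most $C(m,s)\,\delta^{-2(ms+m)}$ proper subspaces of $K^m$. The height threshold required by that theorem involves $\log|\Delta_K|$ and a huge factor of the shape $e^{g(\epsilon)}$; this is precisely what dictates the third entry in the $\max$ defining $\tau(\epsilon)$. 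After substituting for $\delta$ and matching the numerical constants against Evertse's bound, the stated count $2 s^2(m^7\, 2^{4m+17}\epsilon^{-2})^{ms+m}$ falls out.

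The principal technical challenge is the height bookkeeping: one must check that the three terms in the definition of $\tau(\epsilon)$ simultaneously (i) force $|P_1(n)|\gtrsim A'$ so that $X_n$ is genuinely nondegenerate, (ii) render the polynomial factor $n^{as}$ and the coefficient constant $A$ negligible relative to $|\alpha_1|^{n\epsilon'}$ for a suitably chosen $\epsilon'\ll\epsilon$, and (iii) dominate the huge constant $e^{g(\epsilon)}$ arising in the quantitative Subspace Theorem. Once these are in place, the reduction to Evertse's theorem is essentially mechanical, and the final numerical bound follows from direct substitution.
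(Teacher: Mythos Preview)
Your strategy---reduce to a black-box application of Evertse's quantitative Subspace Theorem---is conceptually sound and is in fact acknowledged in the paper's own Remark as yielding \emph{an} effective bound. But it is not how the paper proves Theorem~\ref{thm1}, and the step where you write ``the stated count $2s^2(m^7 2^{4m+17}\epsilon^{-2})^{ms+m}$ falls out'' after ``matching the numerical constants against Evertse's bound'' is precisely where your argument has a gap. The paper does \emph{not} invoke Evertse's theorem as a black box; instead it re-runs the entire machinery of the quantitative Subspace Theorem (successive minima of adelic parallelepipeds, Davenport-type basis lemma, discretisation of the exponents $c_{iv},d_i$, passage to exterior powers via the map $f_k$, the polynomial construction, and a Roth-type lemma) tailored to the very special forms $l_{iv}$ with coefficients in $\{0,1\}$ and determinant $1$. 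This bespoke argument is split into two auxiliary results (Theorems~\ref{thm2} and~\ref{thm3}), whose constants $C'=(64em^4 2^m\epsilon^{-1})^{ms+m}$ and $C''=2^{11m+34}s^2\epsilon^{-4}m^{12}$ multiply to give the stated bound. A direct citation of Evertse's 1996 theorem would produce a bound of the same general shape but with different (worse) constants---which is exactly what the paper's Remark says.

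There is a second, related issue with your handling of the threshold $\tau(\epsilon)$. You correctly identify the roles of the first two entries of the $\max$, but you treat the third entry as ``the huge constant arising in the quantitative Subspace Theorem'' generically. In fact that third entry is calibrated to the paper's own internal constant $C''$ (through $g(x)$), and is used repeatedly inside the proof of Theorems~\ref{thm2} and~\ref{thm3} to absorb factors like $(2d|\Delta_K|^{1/2})^{2m}$, $m!$, and $(9m!)^{e^{2C''}}$ at specific junctures (e.g.\ Lemmas~\ref{lem6'},~\ref{lem7},~\ref{lem9"}). If you black-box Evertse, the required height threshold would have a different explicit form, and you would not recover the stated $\tau(\epsilon)$ on the nose. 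In short: your outline proves a qualitatively correct theorem with the right dependence on $\epsilon$, but to obtain the exact statement you must either carry out the bespoke subspace argument as the paper does, or produce a careful numerical comparison with Evertse's published constants---neither of which your proposal supplies.
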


\section{Deduction of Theorem \ref{thm} from  Theorem \ref{thm1}.}
  
Fix $\epsilon >0.$ Let $n$ be a solution of the inequality \eqref{eq:1} with $n > \tau(\epsilon / 2d)$  and  $X_n \in \mathcal{N}$.  For $v \in S$, we have 
\begin{equation} \label{eq:3}
\vert P_i(n)\vert_v \leq ((d_i+1)n^{d_i})^{s(v)}A^{s(v)} \leq  (an^{a})^{s(v)}A^{s(v)},
 \end{equation}  
 where $a$ is the maximum of multiplicity of roots of characteristic polynomial of $\{u_n\}$; $d_i$ the degree of $P_i$ and $A$ is defined in \eqref{eq:b}. By using the relation \eqref{eq:3}, \eqref{eq:d}, the product formula  and the fact that $\alpha_i \in \mathcal{O}_S^*$, we obtain 
 \begin{equation} \label{eq:4}
 \prod_{ v \in S}^{} \prod_{i=1}^{m} \vert P_i(n) \alpha_i^n\vert_v  \leq   (an^{a}A)^{m}.
 \end{equation}
 Hence, 
 $$
  \begin{array}{lcl}
   { \displaystyle \prod_{ v \in S}^{} \prod_{i=1}^{m} \vert l_{iv}(X_n)\vert_v  }   &=& \dfrac{\vert u_n \vert_{{\rm id}}}{ \vert P_1(n)\alpha_1^n \vert_{{\rm id}}}  \cdot  { \displaystyle \prod_{ v \in S}^{} \prod_{i=1}^{m}  \vert P_i(n) \alpha_i^n \vert_v } \\\\
                                                                                                   &\leq& \dfrac{  (an^{a}A)^{l}}{ \vert P_1(n)\alpha_1^n\vert_{id}} \cdot  \vert \alpha_1 \vert^{ns(id)(1-\epsilon)}\\\\
                                                                                                  &\leq& n^{l^2} \cdot \vert \alpha_1 \vert^{-n\epsilon /d}\\\\
                                                                                                   &\leq&   \vert \alpha_1 \vert^{-n\epsilon /2d},\\\\
 
\end{array}
$$
   where for the first inequality, we used  relation \eqref{eq:4},  the fact that $n$ is solution of inequality \eqref{eq:1} and $m\leq l$ and for the last one, \cite[Lem.\,2.4]{AN} and the fact that $n > \tau(\epsilon / 2d)$. So  $X_n$ is solution of inequality  \eqref{eq:2} ( replacing $\epsilon$ by  $\epsilon /2d$).  
Applying Theorem \ref{thm1}  and the fact that  $X_n \in \mathcal{N}$, we deduce that there are at most $$2\cdot s^2 \left(  d^2l^{7} 2^{4l+19} \epsilon ^{-2}\right)^{ls+l}$$ exponential Diophantine equations of the shape 
   \begin{equation} \label{eq:c}
  \sum_{l \in I}^{} c_lP_l(n)\alpha_l^{n}=0
 \end{equation}
 with $c_l's$,  non-zero elements of   $ K$   and $I \subseteq  \{1, 2, \cdots, m\}$ such that  for all  $X_n$  solution of inequality  \eqref{eq:2},  $n$ is solution of at least one of them.   
   From Lemma 2, each Diophantine equation given by the relation \eqref{eq:c} has at most  $ \exp \left( (7l^a)^{8l^a}\right)$ solutions with $n \in \mathbb{Z}$. Hence, we conclude that,  the cardinality of the set of solutions $n$ of the inequality \eqref{eq:1} with $n > \tau(\epsilon / 2d)$  does not exceed  $$ 2\cdot s^2 \left(  d^2l^{7} 2^{4l+19} \epsilon ^{-2}\right)^{ls+l} \exp \left( (7l^a)^{8l^a}\right)$$ which give us the desired result.

\section{preliminaries}\label{sec2}

 Before starting proof of Theorem \ref{thm1}, we need some notations and fact about exterior algebra, inequalities related to heights and results from the geometry of numbers
over number field.  

Let $V$ be a vector space over the splitting field $K$ of the characteristic polynomial P of $\{u_n\}$ of dimension $m$ (number of distinct roots of $P$), with canonical basis $e_i ~~ (i=1, \cdots, m)$. For $1 \leq p \leq m$, we equip the exterior power  $\wedge^p  V$ with the standard basis $$e_{i_1} \wedge e_{i_2}  \wedge \cdots \wedge e_{i_p}, \quad
\mbox{with} \quad i_1<i_2<\cdots < i_p. $$ Put  $ N= \binom{m}{p}$. Let $\sigma_1, \cdots, \sigma_N$ be the subsets of  $\{1, \cdots, m\}$ given by:
$$\sigma_1=\{1, \cdots, p\},~~ \sigma_2=\{1, \cdots, p-1, p+1 \},  ~~\cdots, $$ and $$ \sigma_{N-1}=\{m-p, m-p+2, \cdots, n\},~~\sigma_N=\{m-p+1, \cdots, m\}$$ ordered lexicographically. For the vectors  $x_t=(x_{t1}, \cdots, x_{tm}) \in K^m$, $1 \leq t \leq m,$  we define 

\begin{equation*}
\Delta_{\sigma_i}=\Delta_i(x_1, \cdots, x_p) = 
\begin{vmatrix}
x_{1,i_1} & x_{1,i_2} & \cdots & x_{1,i_p} \\
x_{2,i_1} & x_{2,i_2} & \cdots & x_{2,i_p} \\
\vdots & \vdots  & \ddots &  \vdots\\
x_{p,i_1} & x_{p,i_2} & \cdots & x_{p,i_p} \\
\end{vmatrix}
\end{equation*}

where $\sigma_i=\{i_1, \cdots, i_p\}$ and also $$x_{1} \wedge x_{2}  \wedge \cdots \wedge x_{p} = (\Delta_{\sigma_1},  ~\Delta_{\sigma_2}, ~\cdots, ~ \Delta_{\sigma_N}).$$ Note that $x_{1} \wedge x_{2}  \wedge \cdots \wedge x_{p}  =0$ if and only if $\{x_1, x_2, \cdots,  x_p\}$ is linearly dependent (see for e.g. Lemma 6B in \cite{SB}, Chap.IV). For $x=(x_1,  x_2, \cdots,  x_m)$, we define $x^* = (x_m, - x_{m-1}, \cdots, (-1)^{m-1}x_1)$ and extending the standard scalar product $x\cdot y = {\displaystyle \sum_{i=1}^{m} x_iy_i },$ where $x, y \in K^m$ to $\wedge^p  V$ by the Laplace identity 
 \begin{equation} \label{eq:5}
 (x_{1} \wedge x_{2}  \wedge \cdots \wedge x_{p}) \cdot (y_{1} \wedge y_{2}  \wedge \cdots \wedge y_{p}) = {\rm det}(x_i\cdot y_j)_{i,j=1,\cdots, p}
 \end{equation}
and the Laplace extension 
\begin{equation} \label{eq:6}
x_1\cdot  (x_{2} \wedge x_{3}  \wedge \cdots \wedge x_{m})^* = {\rm det}(x_1\cdots , x_m)  \quad \mbox{with} \quad x_1,\cdots, x_m \in K^m.
 \end{equation}
  A similar notation will be used with regard to  linear forms. If $l(X) = a \cdot X = \sum_{i=1}^{m} a_iX_i, $ where $a=(a_1, \cdots, a_m)$, we put $l^*(X)= a^*\cdot X.$ Further, for linear forms  $l_t(X)= a^{(t)}\cdot X$, with $1 \leq t \leq p,$ in $m$ variables, we define the linear forms in 
 $N$ variables by $$(l_{1} \wedge l_{2}  \wedge \cdots \wedge l_{p})(x)= (l_{1} \wedge l_{2}  \wedge \cdots \wedge l_{p})\cdot x.$$ Let $\{a_1, \cdots, a_m\}$ and  $\{b_1, \cdots, b_m\}$ two basis of $K^m$, then $\{A_1, \cdots, A_N\}$ and  \\
$\{B_1, \cdots, B_N\}$ with $A_i= a_{i_1} \wedge a_{i_2}  \wedge \cdots \wedge a_{i_{m-p}}$ and  $B_i= b_{i_1} \wedge b_{i_2}  \wedge \cdots \wedge b_{i_{m-p}}$  are two basis of $K^N$. \\
    The following lemma, established in (\cite[Lem.\,3.3.1]{EE}, Chap IV) will be used at several places in this  paper.
\begin{lem} \label{lem}
Let $k \in \{1, \cdots, m\}$. If  $\{a_1, \cdots, a_m\}$ and  $\{b_1, \cdots, b_m\}$ two basis of $K^m$, then $\{a_1, \cdots, a_k\}$ and  $\{b_1, \cdots, b_k\}$ generate the same vector space if and only if  $\{A_1, \cdots, A_{N-1}\}$ and  
$\{B_1, \cdots, B_{N-1}\}$  generate the same vector space with  $N=  \binom{m}{k} $.
 \end{lem}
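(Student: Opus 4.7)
The plan is to identify both conditions in the biconditional with a single piece of data, namely the $1$-dimensional subspace of $\wedge^k V$ spanned by a decomposable $k$-vector. I would use the nondegenerate pairing
$$\Phi : \wedge^k V \times \wedge^{m-k} V \longrightarrow \wedge^m V \cong K, \qquad (\xi, \omega) \mapsto \xi \wedge \omega,$$
together with the associated linear functionals $\mu_\alpha(\omega) := \alpha \wedge \omega$ and $\mu_\beta(\omega) := \beta \wedge \omega$ on $\wedge^{m-k} V$, where $\alpha := a_1 \wedge \cdots \wedge a_k$ and $\beta := b_1 \wedge \cdots \wedge b_k$.

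The first step is to recognise that $\mathrm{span}\{A_1, \ldots, A_{N-1}\} = \ker \mu_\alpha$ (and analogously for the $B_i$). In the lexicographic ordering the omitted index $\sigma_N$ corresponds to the size-$(m-k)$ subset $\{k+1,\ldots,m\}$, which is the unique such subset disjoint from $\{1,\ldots,k\}$. Hence for $i \le N-1$ the wedge $\alpha \wedge A_i$ contains a repeated factor among the $a_j$'s and vanishes, while $\alpha \wedge A_N = \pm\, a_1 \wedge \cdots \wedge a_m$ is nonzero. So $A_1,\ldots,A_{N-1}$ all lie in the hyperplane $\ker \mu_\alpha \subset \wedge^{m-k} V$, and since they are part of a basis they span it.

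The second step converts the conclusion into a statement about $\alpha$ and $\beta$: two nonzero linear functionals on $\wedge^{m-k} V$ share the same kernel iff they are proportional, and the nondegeneracy of $\Phi$ (equivalently, the isomorphism $\wedge^k V \xrightarrow{\sim} (\wedge^{m-k} V)^{*}$, $\xi \mapsto \mu_\xi$) turns proportionality of $\mu_\alpha, \mu_\beta$ into proportionality of $\alpha, \beta$ in $\wedge^k V$. Combined with the first step, the wedge condition $\mathrm{span}\{A_i\}_{i<N} = \mathrm{span}\{B_i\}_{i<N}$ is equivalent to $\alpha = c\,\beta$ for some $c \in K^{\times}$.

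The last step, which I would present most carefully, is the classical equivalence $\alpha \parallel \beta \iff \mathrm{span}(a_1,\ldots,a_k) = \mathrm{span}(b_1,\ldots,b_k)$. For $(\Leftarrow)$ the common span $W$ satisfies $\dim \wedge^k W = 1$, and expanding $\beta$ via the change-of-basis matrix $M$ expressing the $b_j$'s in the $a_j$'s yields $\beta = \det(M)\,\alpha$ with $\det(M) \neq 0$. For $(\Rightarrow)$, $b_j \wedge \alpha = c^{-1}\, b_j \wedge \beta = 0$ forces $b_j \in \mathrm{span}(a_1,\ldots,a_k)$, after which equality of spans follows by dimension count. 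The main obstacle is essentially bookkeeping: one must verify that the lex-ordering convention really places $A_N = a_{k+1}\wedge\cdots\wedge a_m$ last, so that discarding $A_N$ isolates exactly the hyperplane $\ker\mu_\alpha$. Once this combinatorial check is in place, the rest reduces to the perfect-pairing reformulation and the Plücker-style argument above.
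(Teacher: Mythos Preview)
Your argument is correct. You identify $\mathrm{span}\{A_1,\ldots,A_{N-1}\}$ with the hyperplane $\ker\mu_\alpha\subset\wedge^{m-k}V$, reduce the equality of these hyperplanes to proportionality of $\alpha=a_1\wedge\cdots\wedge a_k$ and $\beta=b_1\wedge\cdots\wedge b_k$ via the perfect pairing $\wedge^kV\times\wedge^{m-k}V\to\wedge^mV$, and finish with the standard fact that two nonzero decomposable $k$-vectors are proportional iff they define the same $k$-plane. The combinatorial check that $\sigma_N=\{k+1,\ldots,m\}$ under the lexicographic order is the right one, and your justification of the Pl\"ucker-type step (using $b_j\wedge\alpha=0\Rightarrow b_j\in\mathrm{span}(a_1,\ldots,a_k)$) is sound.

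As for comparison: the paper does not actually prove this lemma. It simply records the statement and cites Lemma~3.3.1 in Chapter~IV of Edixhoven--Evertse \cite{EE}. So there is no ``paper's proof'' to weigh your approach against; your write-up supplies a self-contained argument where the paper relies on an external reference. The approach in \cite{EE} is essentially the same multilinear-algebra argument you outline (pairing with the complementary exterior power and the Pl\"ucker correspondence), so you are not taking a genuinely different route---just making explicit what the paper leaves to the citation.
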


This lemma implies the existence of an injective map $f_k$ from the collection of $k$-dimensional subspaces of $K^m$ to the collection of  $(N-1)$-dimensional subspaces of $K^N$ such that if $V$ has basis  $\{a_1, \cdots, a_k\}$  and $a_{k+1}, \cdots a_n$ are any vectors such that $\{a_1, \cdots, a_n\}$  is a basis of $K^m,$ then $\{A_1, \cdots, A_{N-1}\}$ is a basis of $f_k(V)$.

\begin{lem}\label{lm}
 Let $v \in M_K$. The  following inequalities hold
\begin{equation} \label{eq:7}
\vert x\cdot y \vert_v \leq m^{s(v)} \vert x \vert_v  \cdot \vert y \vert_v   \quad \mbox{for} \quad x, y  \in K^m
 \end{equation}
and 
\begin{equation} \label{eq:9}
\vert x_{1} \wedge x_{2}  \wedge \cdots \wedge x_{k}  \vert_v \leq (k!)^{s(v)} \vert x_1 \vert_v  \cdots  \vert x_k \vert_v   \quad \mbox{for} \quad x_1, \cdots, x_k  \in K^m,~~k \leq m.
 \end{equation}
If  $x_1, \cdots, x_m$ are linearly  independent vectors in $K^m,$ we also have 
\begin{equation} \label{eq:8}
\vert {\rm det}(x_1\cdots , x_m)  \vert_v \leq (m!)^{s(v)} \vert x_1 \vert_v  \cdots  \vert x_m \vert_v 
 \end{equation}
and
\begin{equation} \label{eq:10}
H(x_1 \wedge x_{2}  \wedge \cdots \wedge x_{k} ) \leq k! \cdot H(x_1)\cdot H(x_2) \cdots H( x_k).
 \end{equation}
\end{lem}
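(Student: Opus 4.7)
The plan is to prove the four inequalities by splitting according to whether the place $v$ is archimedean or non-archimedean, and then deduce the height bound (10) from the local bound (9) via the product formula together with the identity $\sum_{v\in M_K} s(v)=1$ from equation (4).

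For archimedean $v$, the definition of $|\cdot|_v$ together with the values $s(v)=1/d$ (real) and $s(v)=2/d$ (complex) give the uniform identity $|z|_v=|\sigma(z)|^{s(v)}$ for $z\in K$, where $\sigma$ is the embedding attached to $v$. Applied to $x\cdot y=\sum_i x_i y_i$, the ordinary triangle inequality in $\mathbb{R}$ or $\mathbb{C}$ yields $|\sigma(x\cdot y)|\le m\max_i|\sigma(x_i)||\sigma(y_i)|$; raising to the $s(v)$-th power produces (7). For (9) and (8), each coordinate $\Delta_{\sigma_i}$ of $x_1\wedge\cdots\wedge x_k$ is a signed sum of $k!$ products of $k$ entries, so the same triangle-inequality argument gives $|\Delta_{\sigma_i}|_v\le (k!)^{s(v)}|x_1|_v\cdots|x_k|_v$, and taking the maximum over $i$ yields (9). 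Inequality (8) is then the case $k=m$ (where the wedge product collapses to a single coordinate, namely the determinant, via \eqref{eq:6}).

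For non-archimedean $v$ the proofs simplify because $s(v)=0$ makes every constant $m^{s(v)}$, $(k!)^{s(v)}$, $(m!)^{s(v)}$ equal to $1$, and the ultrametric inequality $|\sum_j y_j|_v\le\max_j|y_j|_v$ replaces the triangle inequality: directly $|x\cdot y|_v\le\max_i|x_i|_v|y_i|_v\le|x|_v|y|_v$, and analogously for each minor $\Delta_{\sigma_i}$.

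Finally, for (10) one takes the product of (9) over all $v\in M_K$. Using $|x_1\wedge\cdots\wedge x_k|_v=\max_i|\Delta_{\sigma_i}|_v$ and the definition of the height, this gives
\[
H(x_1\wedge\cdots\wedge x_k)\le\prod_{v\in M_K}(k!)^{s(v)}|x_1|_v\cdots|x_k|_v=(k!)^{\sum_v s(v)}H(x_1)\cdots H(x_k),
\]
and the exponent equals $1$ by \eqref{eq:d}. There is no substantial obstacle here beyond bookkeeping: the only point requiring care is to check that the archimedean normalization chosen in the paper (the $s(v)$-th power of the usual complex absolute value) absorbs the difference between real and complex places automatically, so that a single triangle-inequality argument handles both cases uniformly.
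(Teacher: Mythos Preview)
Your proof is correct and follows essentially the same approach as the paper's: both split into archimedean and non-archimedean cases, use the triangle (resp.\ ultrametric) inequality on the $k!$ terms of each minor $\Delta_{\sigma_i}$ to get \eqref{eq:9}, derive \eqref{eq:8} as the special case $k=m$, and obtain \eqref{eq:10} by multiplying \eqref{eq:9} over all places and invoking $\sum_v s(v)=1$. The only minor quibble is that your citation of \eqref{eq:6} for the identification of the $m$-fold wedge with the determinant is not quite the right reference---that identity follows directly from the definition of the Pl\"ucker coordinates (the single minor when $N=\binom{m}{m}=1$), not from the Laplace extension---but this does not affect the argument.
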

\begin{proof}
Let $x, y$ be two elements of $K^m$. By the above definition  of the scalar product on the vector space $K^m$, we infer that
$$\vert x\cdot y \vert_v   = \left| \sum_{i=1}^{m} x_iy_i \right|_v \leq  m^{s(v)} \vert x \vert_v  \cdot \vert y \vert_v,$$
where for the inequality, we have used the fact that  $\vert \cdot \vert_v^{1/s(v)}$ is the usual absolute value when $v$ is an infinite place over $K$ and the strong triangle inequality property of  $\vert \cdot \vert_v$ whereas $v$ is an finite place over $K$. Therefore the relation \eqref{eq:7} holds.  By the  above definition  of the exterior power over the vector space $K^m,$ we have  $$x_{1} \wedge x_{2}  \wedge \cdots \wedge x_{k} = (\Delta_{\sigma_1},  ~\Delta_{\sigma_2}, ~\cdots, ~ \Delta_{\sigma_N}),$$ with $N=\binom{m}{k}$ and
\begin{equation*}
\Delta_{\sigma_i}=\Delta_i(x_1, \cdots, x_k) = 
\begin{vmatrix}
x_{1,i_1} & x_{1,i_2} & \cdots & x_{1,i_k} \\
x_{2,i_1} & x_{2,i_2} & \cdots & x_{2,i_k} \\
\vdots & \vdots  & \ddots &  \vdots\\
x_{k,i_1} & x_{k,i_2} & \cdots & x_{k,i_k} \\
\end{vmatrix}
\end{equation*}
where $\sigma_i=\{i_1< \cdots < i_k\}$. From the Leibniz formula for the determinant $\Delta_{\sigma_i}$,  it follows that 
$$\vert \Delta_{\sigma_i} \vert_v \leq (k!)^{s(v)} \vert x_1 \vert_v  \cdots  \vert x_k \vert_v$$ for all $i \in \{1, \cdots, N\}.$ So  $$\vert x_{1} \wedge x_{2}  \wedge \cdots \wedge x_{k}  \vert_v \leq (k!)^{s(v)} \vert x_1 \vert_v  \cdots  \vert x_k \vert_v.$$ If  $x_1, \cdots, x_m$ are linearly  independent vectors in $K^m,$ then $$\vert {\rm det}(x_1\cdots , x_m) \vert_v =  \vert x_{1} \wedge x_{2}  \wedge \cdots \wedge x_{m}  \vert_v$$ and so the relation \eqref{eq:8} follows immediately from \eqref{eq:9}. By  taking the product over all the places of $M_K$ on the relation \eqref{eq:9} and using the definition of the height function $H(\cdot)$, we obtain the relation \eqref{eq:10}.
\end{proof}

\begin{lem} \label{lem'}
Let $v \in M_K$, $H$ positive real number and $x_1, \cdots, x_p$  linearly  independent vectors in $K^m,$ with $H(x_i) \leq H$ for $i=1, \cdots, p.$ Then
\begin{equation} \label{eq:11}
\frac{H^{-p}}{p!} \leq \dfrac{\vert x_{1} \wedge x_{2}  \wedge \cdots \wedge x_{p}  \vert_v} {\vert x_1 \vert_v  \cdots  \vert x_p\vert_v } \leq (p!)^{s(v)}.
 \end{equation}
 In particular, if $p=m,$ then
 \begin{equation} \label{eq:12}
\frac{H^{-m}}{m!}  \leq \dfrac{\vert {\rm det}(x_1\cdots , x_m) \vert_v } {\vert x_1 \vert_v  \cdots  \vert x_m \vert_v } \leq (m!)^{s(v)}.
 \end{equation}
 \end{lem}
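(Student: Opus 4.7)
The upper bounds in \eqref{eq:11} and \eqref{eq:12} require essentially no new work: \eqref{eq:11} is just inequality \eqref{eq:9} of Lemma \ref{lm} rearranged, and when $p=m$ the wedge $x_1 \wedge \cdots \wedge x_m$ is a scalar which, up to sign, equals $\det(x_1,\ldots,x_m)$, so $|x_1 \wedge \cdots \wedge x_m|_v = |\det(x_1,\ldots,x_m)|_v$ and \eqref{eq:8} delivers the upper bound in \eqref{eq:12}.

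The content of the lemma is the lower bound, and the plan is to extract it from the product formula combined with the upper bound already in hand at every other place. First I would observe that since $x_1,\ldots,x_p$ are linearly independent, the vector $x_1 \wedge \cdots \wedge x_p \in K^N$ is nonzero, so at least one of its coordinates $\Delta_{\sigma_i}$ is a nonzero element of $K$. Applying the product formula to that coordinate gives $\prod_{w \in M_K} |\Delta_{\sigma_i}|_w = 1$, and since $|x_1 \wedge \cdots \wedge x_p|_w \geq |\Delta_{\sigma_i}|_w$ for every $w$, we obtain $H(x_1 \wedge \cdots \wedge x_p) = \prod_{w} |x_1 \wedge \cdots \wedge x_p|_w \geq 1$.

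Next I would split this global product at the distinguished place $v$:
$$
\frac{|x_1 \wedge \cdots \wedge x_p|_v}{|x_1|_v \cdots |x_p|_v}
= \frac{H(x_1 \wedge \cdots \wedge x_p)}{H(x_1)\cdots H(x_p)}\cdot \prod_{w \neq v} \frac{|x_1|_w \cdots |x_p|_w}{|x_1 \wedge \cdots \wedge x_p|_w}.
$$
At every place $w \neq v$, inequality \eqref{eq:9} of Lemma \ref{lm} gives $|x_1 \wedge \cdots \wedge x_p|_w \leq (p!)^{s(w)}|x_1|_w\cdots|x_p|_w$, so each factor on the right is at least $(p!)^{-s(w)}$. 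Multiplying and using \eqref{eq:d} to get $\sum_{w \neq v} s(w) = 1 - s(v) \leq 1$, we find
$$
\prod_{w \neq v} \frac{|x_1|_w \cdots |x_p|_w}{|x_1 \wedge \cdots \wedge x_p|_w} \geq (p!)^{-(1-s(v))} \geq \frac{1}{p!}.
$$
Combining this with $H(x_1 \wedge \cdots \wedge x_p) \geq 1$ and the hypothesis $H(x_i) \leq H$, which implies $H(x_1)\cdots H(x_p) \leq H^p$, produces the required lower bound $H^{-p}/p!$. For the special case $p=m$, the identification $|x_1 \wedge \cdots \wedge x_m|_v = |\det(x_1,\ldots,x_m)|_v$ transfers \eqref{eq:11} to \eqref{eq:12} with no extra effort.

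The only subtle point is the first step — verifying $H(x_1 \wedge \cdots \wedge x_p) \geq 1$ from the product formula applied to a single nonzero coordinate — but this is standard once one remembers that the max-norm dominates each individual coordinate at every place. Everything else is careful bookkeeping with the weights $s(w)$ and equation \eqref{eq:d}.
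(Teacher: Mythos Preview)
Your argument is correct and follows essentially the same route as the paper's proof: both observe that linear independence forces $x_1\wedge\cdots\wedge x_p\neq 0$ and hence $H(x_1\wedge\cdots\wedge x_p)\ge 1$, then split the global product at the distinguished place $v$, apply \eqref{eq:9} at every other place, and combine with $\sum_w s(w)=1$ and the height bound $H(x_i)\le H$. The only cosmetic difference is that you explicitly justify $H(x_1\wedge\cdots\wedge x_p)\ge 1$ via the product formula on a nonzero coordinate, whereas the paper simply asserts it.
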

 
\begin{proof}
 We prove this result by using the same argument as in the proof of \cite[Lem.\,2]{E}. Since  $x_1, \cdots, x_p$  are  linearly  independent vectors, it follows from (Lemma 6B \cite{SB}, Chap.IV) that  $x_1 \wedge x_{2}  \wedge \cdots \wedge x_{p} \neq 0$, therefore $H(x_1 \wedge x_{2}  \wedge \cdots \wedge x_{p}) \geq 1$.  Hence we have
  $$
  \begin{array}{lcl}
  \dfrac{\vert x_{1} \wedge x_{2}  \wedge \cdots \wedge x_{p}  \vert_v} {\vert x_1 \vert_v  \cdots  \vert x_p\vert_v }  &=&  \dfrac{ \left({ \displaystyle \prod_{w \in M_K \setminus \{v\}}^{}\vert x_{1} \wedge x_{2}  \wedge \cdots \wedge x_{p}  \vert_v} \right)^{-1} \cdot  H( x_1 \wedge x_{2}  \wedge \cdots \wedge x_{p} ) } {\vert x_1 \vert_v  \cdots  \vert x_p\vert_v }  \\\\
                                                                                                   &\geq& { \displaystyle \prod_{w \in M_K \setminus \{v\} }^{} (p!)^{-s(w)}} \cdot { \displaystyle \prod_{w \in M_K }^{} ( \vert x_1 \vert_w  \cdots  \vert x_p\vert_w)^{-1} } 
                                                                                                   \\\\
                                                                                                  &\geq& (p!)^{-1} H(x_1)^{-1} \cdots H(x_p)^{-1} \\\\
                                                                                                   &\geq&   \dfrac{H^{-p}}{p!}\\
 
\end{array}
$$
where, for the first inequality, we have used  relation \eqref{eq:9} of the Lemma \ref{lm}, the second inequality follows from the fact that  $$\sum_{w \in M_K}^{} s(w) = 1$$ whereas the last inequality holds because $H(x_i) \leq H$ for $i=1, \cdots, p.$  So we proved the first inequality of relation \eqref{eq:11}.  The second inequality follows immediately from relation \eqref{eq:9}. Therefore the proof of the  relation \eqref{eq:11} is complete.  By the definition above of the exterior power of the vector space $K^m,$ ones get 
$$\vert {\rm det}(x_1\cdots , x_m) \vert_v =  \vert x_{1} \wedge x_{2}  \wedge \cdots \wedge x_{m}  \vert_v.$$ Hence, the relation  \eqref{eq:12} follows from the  relation \eqref{eq:11} which complete the proof of our lemma.
\end{proof}
We now define ${\rm rank}(x_1, \cdots, x_r)$ the maximum number of vector in $M = \{x_1, \cdots, x_r\}$ whose are linearly independent over $K^m$. We will need the following lemma which is a slight modification  of Lemma 3 in \cite{E}

\begin{lem} \label{lem1''}
Let $H$ be a positive real number,  $a_1, \cdots, a_r \in \overline{\mathbb{Q}}^m $ with  $H(a_i) \leq H$ for $i=1, \cdots, r$ and $x\in K^m$ non-zero  such that  
$$ a_i \cdot x = 0 \quad \mbox{for} \quad i=\{1, \cdots r\}. $$
Suppose that  ${\rm rank}(a_1, \cdots, a_r) \leq m-1.$ Then, there exists $y \in K^m$ only depend of $a_i$ such that $y \neq 0,~~ a_i \cdot y =0$ for $i=1, \cdots, r$ and $H(y) \leq (m-1)! H^{m-1}$.
 \end{lem}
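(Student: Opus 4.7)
The plan is to construct $y$ explicitly as the $*$-operation applied to a wedge product of a maximal linearly independent subfamily of $\{a_1,\ldots,a_r\}$, completed with standard basis vectors if needed, and then to bound its height via \eqref{eq:10}.

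First, let $k := {\rm rank}(a_1,\ldots,a_r)$, so $k\leq m-1$ by hypothesis. After relabeling, take $a_1,\ldots,a_k$ to be a basis of the span of the $a_i$. Since $k\leq m-1$, the standard exchange argument produces indices $j_1<\cdots<j_{m-1-k}$ such that the family
$$b_1:=a_1,\ \ldots,\ b_k:=a_k,\ b_{k+1}:=e_{j_1},\ \ldots,\ b_{m-1}:=e_{j_{m-1-k}}$$
consists of $m-1$ linearly independent vectors. (If $k=m-1$ no completion is needed.) Form the wedge
$$w:=b_1\wedge b_2\wedge\cdots\wedge b_{m-1},$$
which lies in $\wedge^{m-1}V$, identified with $K^{m}$ via the standard basis. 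Because the $b_j$'s are linearly independent, $w\neq 0$ by the criterion following \eqref{eq:5}. Define $y:=w^{*}$; this depends only on the data $a_1,\ldots,a_r$ (the standard basis vectors are canonical), as required.

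Second, I would check the orthogonality condition. By the Laplace extension identity \eqref{eq:6},
$$a_i\cdot y \;=\; a_i\cdot(b_1\wedge\cdots\wedge b_{m-1})^{*} \;=\; \det(a_i,b_1,\ldots,b_{m-1})$$
for every $i\in\{1,\ldots,r\}$. Since $a_i$ lies in the span of $b_1,\ldots,b_k$, this determinant has linearly dependent rows and therefore vanishes. Thus $a_i\cdot y=0$ for all $i$, and $y\neq 0$ because the $*$-operation is an isomorphism and $w\neq 0$.

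Third, I would bound the height. The $*$-operation merely permutes coordinates up to sign, so $|y|_v=|w|_v$ at every place $v\in M_K$, giving $H(y)=H(w)$. Invoking \eqref{eq:10} from Lemma \ref{lm} and using that $H(e_{j})=1$ for every standard basis vector together with $H(a_i)\leq H$, I get
$$H(y)=H(w)\;\leq\;(m-1)!\,H(b_1)\cdots H(b_{m-1})\;\leq\;(m-1)!\,H^{k}\,\cdot 1^{m-1-k}\;\leq\;(m-1)!\,H^{m-1},$$
which is exactly the desired bound. There is no serious obstacle: the only care needed is in the exchange step to ensure linear independence after completing with the $e_{j}$'s (routine) and in noting that the $*$-operation is height-preserving (immediate from the definition of $|\cdot|_v$ as a maximum of absolute values).
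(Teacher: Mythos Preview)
Your argument is correct and follows the standard exterior-algebra construction. The paper does not give its own proof here---it merely records the lemma as a slight modification of Lemma~3 in \cite{E}---and your approach is the usual one for that result.

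One small caveat worth flagging: since your $y$ is built from the $a_i$ and standard basis vectors, its coordinates lie in the field generated by the entries of the $a_i$, which in the stated generality ($a_i\in\overline{\mathbb{Q}}^m$) need not be contained in $K$. This is really an imprecision in the lemma as written rather than a flaw in your proof; in the paper's only application (the proof of Lemma~\ref{lem9"}) the vectors playing the role of the $a_i$ are, up to scalar, defined over $K$, so your construction does land in $K^N$ as required.
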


Here we will provide the result of Bombieri and Vaaler  about a generalization of Minkowski's result on geometry of numbers to Adele ring of number field (see appendix C in \cite{BV} for more details). Throughout of this section,  $K$ is the splitting field of  characteristic polynomial of $\{u_n\}$ and $S$ the set of places consist of all archimedean places and prime ideals above $\alpha_1, \cdots, \alpha_m.$ Let $\mathcal{O}_v$ be a local ring $\{x \in K_v~:~ \vert x \vert_v \leq 1\}.$ A set $C_v$ of $K_v^m$ (with respect to $v$-adic topology) is called a symmetric convex body in $K_v^m$ if 
\begin{itemize}
\item $0$ is an interior point of $C_v$ and $C_v$ compact,
\item If $x \in C_v$ and $\alpha \in K_v, ~~ \vert \alpha \vert_v=1$  then $\alpha x \in C_v,$
\item if $v\mid \infty$ and $ x, y  \in C_v$  then $\lambda x + (1-\lambda) y \in C_v$ for all $\lambda \in \mathbb{R}, ~~0\leq \lambda \leq 1;$ otherwise  $ x, y  \in C_v$  then $x +  y \in C_v.$ 
\end{itemize}
The ring of $K$-Adele $V_K$ is the set of infinite tuple $(x_v, ~~ v\in M_K)$ with $x_v \in K_v$ and  $\vert x_v \vert_v \leq 1$ for all but finitely many $v$, endowed with component-wise addition and multiplication.  The set $V_K^m$ may be identified with a set of infinite tuples of vectors $(x_v)$ with $x_v \in K_v^m$ for all $v \in M_K$ and $x_v \in \mathcal{O}_v^m$ for all but finitely many $v$. By the diagonal embedding, we mean the map $$\psi\,:\, K^m \rightarrow V_K^m \,:\, x \mapsto (x_v),$$ with $x_v = x$ for $v \in M_K.$ A convex body in  $V_K^m$ is a Cartesian product  $$C = \prod_{v \in M_K} C_v = \{(x_v) \in V_K^m ~:~~ x_v \in C_v ~~ \mbox{for} ~~ \mbox{all} ~~ v \in M_K\}$$ where for every $v \in M_K, ~~ C_v$ is a symmetric convex body in  $V_K$ and 
$C_v = \mathcal{O}_v^m$ for all but finitely many $v \in M_K.$ For $\lambda \in \mathbb{R},$ we define $$\lambda C := \prod_{v \mid \infty}^{} \lambda C_v \times  \prod_{v \nmid \infty}^{}  C_v .$$ The $i^{th}$ successive minima $\lambda_i := \lambda_i (C)$ is defined by $$\lambda_i := \min \{\lambda \in \mathbb{R}_{>0}~:~~ \psi^{-1} (\lambda C) ~~ \mbox{contains} ~ i~ \mbox{linear}~ \mbox{independent}~\mbox{points}\}.$$ This minimum exist (see Appendix C in \cite{BV}). We recall that we use the volume with respect to the following measure. If $v \nmid \infty,$ then $K_v$ is a commutative locally compact set so that the Haar measure exist and it's unique up to a constant. Further, we consider the measure $\beta_v$ on $K_v$ defined by $\beta_v (\Omega) = \vert \Delta_{K_v } \vert_p^{1/2} \mu_v (\Omega), $ with $\mu_v$ defined at Example C.1.3 \cite{BV} and $\Omega$ a $\mu_v$-measurable set of $K_v$. If $v \mid \infty$  and real then $K_v = \mathbb{R}$ and take $\beta_v$ the usual Lebesgue measure on $\mathbb{R.}$ If $v$ is complex infinite, then  $K_v = \mathbb{C} $ and take $\beta_v$ two times Lebesgue measure on the complex plane.  The corresponding product measure  on $K_v^m$ is denoted $\beta_v^m.$ If $\rho$ is a linear transformation of $K_v^m$ into itself then
 $$\beta_v^m (\rho D) = \vert {\rm det}(\rho) \vert_v^d \beta_v^m(D)$$ for any $\beta_v^m$-measurable set $D$ of $K_v^m$ (see pp. 239 in \cite{E}). Now let $\beta= \prod_{v}\beta_v$ be the product measure on $V_K$ and $\beta^m$ the $n$-fold product measure of $V_K^m.$\\
      {Bombieri and Vaaler (\cite{BW}, Theorem 3  and  Theorem 6) proved the following generalisation of Minkowski's theorem (see also Lemma 6 in \cite{E}).
  
 \begin{lem} \label{lem1}
Let $\Delta_K$ be the discriminant of $K$ and $r_2$ the number of complex infinite places of $K$. Further let $C$ be a symmetric convex body in $V_K^m$ and $\lambda_1, \cdots, \lambda_m$ its successive minima. Then 
$$\left( \dfrac{\pi^mm!}{2}\right)^{r_2/d} \dfrac{2^m}{m!} \vert \Delta_K \vert^{-m/2d} ~ \leq ~  \lambda_1 \cdots  \lambda_m \cdot \beta^m(C)^{1/d} ~ \leq  ~ 2^m.$$
 \end{lem}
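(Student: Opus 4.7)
The plan is to invoke the adelic Minkowski second theorem of Bombieri and Vaaler directly; the two inequalities stated here are a convenient packaging of Theorems~3 and~6 of \cite{BW}. My first step is to verify that the normalization of the measure $\beta^m$ introduced just before the lemma coincides exactly with the normalization used by Bombieri--Vaaler: Lebesgue measure at real infinite places, twice Lebesgue at complex infinite places, and $\beta_v = \vert \Delta_{K_v}\vert_p^{1/2}\,\mu_v$ at finite places. This choice is precisely what makes the covolume of the diagonal embedding $\psi(\mathcal{O}_K^m) \subset V_K^m$ equal to $\vert \Delta_K\vert^{m/2}$, and hence what produces the factor $\vert \Delta_K\vert^{-m/2d}$ on the right-hand side of the lower bound.

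For the upper estimate $\lambda_1 \cdots \lambda_m \cdot \beta^m(C)^{1/d} \leq 2^m$ I would follow the classical Minkowski--Davenport argument transposed to adeles: choose linearly independent $x_1,\dots,x_m \in K^m$ with $\psi(x_i) \in \lambda_i C$, consider the parallelotope they span inside $\lambda_m C$, and compare its adelic volume with $\beta^m(C)$. This is the content of Theorem~3 in \cite{BW} (and is the inequality recorded as Lemma~6 in \cite{E}).

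For the lower estimate my strategy is to appeal to Theorem~6 of \cite{BW}, which is the adelic analog of $\lambda_1\cdots\lambda_m \cdot \mathrm{vol}(C) \geq 2^m/m!$. The factor $(\pi^m m!/2)^{r_2/d}$ arises from the discrepancy between the volume of a Euclidean ball and the volume of a polydisc at complex infinite places, and it enters precisely when one reduces a general symmetric convex body to the polydisc model that Bombieri--Vaaler treat explicitly.

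The hard part, were one to redo the proof from scratch rather than cite, would be the lower bound: one must track the exact $r_2$-dependence through the reduction from an arbitrary convex body to a product of polydiscs, and one must check that the normalization of $\beta^m$ matches that of \cite{BW} so that the discriminant factor appears in the stated form. Since Lemma~\ref{lem1} is used only as a black box in what follows, this normalization check is in fact the only substantive verification required here.
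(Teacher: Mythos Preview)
Your proposal is correct and matches the paper's approach exactly: the paper does not prove Lemma~\ref{lem1} at all but simply records it as a citation of Theorems~3 and~6 of Bombieri--Vaaler \cite{BW} (and Lemma~6 of Evertse \cite{E}), which is precisely what you propose to do. Your additional remarks on the measure normalization are a useful sanity check but go beyond what the paper itself provides.
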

 By a tuple $(N, \overline{\gamma}, \mathcal{L}, Q)$ 
\begin{itemize}
\item $Q>1;~~N \geq 2; \quad \overline{\gamma}= (\gamma_{iv}, ~~ v \in S, ~~ i=1, \cdots, N);$
\item a system of linear forms $\mathcal{L} = \{\overline{l}_{iv}~:~ v \in S, ~~ i=1, \cdots, N\}$ in $N$ variables such that  $ \{\overline{l}_{1v}, \cdots,  \overline{l}_{Nv} \} $ is linearly independent for each $v \in S,$
\end{itemize}
we define the set $$\Pi (N, \overline{\gamma}, \mathcal{L}, Q) := \{ y \in \mathcal{O}_S^N~:~ \vert \overline{l}_{iv}(y) \vert_v \leq Q^{\gamma_{iv}}, ~~ \mbox{for} ~~ v \in S, ~~ i=1, \cdots, N\}$$ and 
$$V(N, \overline{\gamma}, \mathcal{L}, Q) := the ~ K-vector ~ space ~ generated ~ by ~ \Pi (N, \overline{\gamma}, \mathcal{L}, Q).$$ Notice that the map $f_k$ is defined at the page 7. We will deduce our Theorem \ref{thm1} from the two following results.

\begin{thm} \label{thm2}
Let   $0 <  \epsilon < 1/ 12,$ and $X_n \in  \mathcal{N}$ with $n> \tau(\epsilon)$ solution of inequality \eqref{eq:2} such that $l_{iv}(X_n) \neq 0$  for $v \in S,~~ i=1, \cdots, m.$  There are a proper linear subspace $W$ of $K^m$ and a tuple $ (N, \overline{\gamma}, \mathcal{L}, Q)$ with $N= \binom{m}{k}$ where $k = {\rm dim}_K W,$ such that  the three conditions below are satisfied:      
 \begin{equation} \label{eq:13}
X_n \in W, \quad f_k(W) = V(N, \overline{\gamma}, \mathcal{L}, Q), \quad {\rm dim}_K (V(N, \overline{\gamma}, \mathcal{L}, Q))= N-1;
 \end{equation}

 \begin{equation} \label{eq:14'}
 \overline{\gamma}= (\gamma_{iv}, ~~ v \in S, ~~ i=1, \cdots, N), ~~ with ~~  \gamma_{iv} \leq s(v) ~~and ~~ \sum_{v \in S}^{} \sum_{i=1}^{N} \gamma_{iv} \leq -\frac{\epsilon}{9m^3};
 \end{equation}

 \begin{equation} \label{eq:15"}
Q \geq (\vert \alpha_1 \vert)^{3mn}
 \end{equation}
and such that $(N, \overline{\gamma}, \mathcal{L}) \in \mathcal{C},$ where $\mathcal{C}$ is a fixed set independent of $X_n$ of cardinality at most   $C':= (64em^42^m \epsilon^{-1})^{ms+m}.$
\end{thm}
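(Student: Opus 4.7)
My plan is to follow the classical geometry-of-numbers scheme used by Schmidt, Schlickewei, and Evertse, adapted to the $S$-adic setting of Bombieri--Vaaler (Lemma \ref{lem1}). First, set $Q := \lvert \alpha_1 \rvert^{3mn}$, which makes \eqref{eq:15"} an equality. Because $X_n \in \mathcal{O}_S^m$ has coordinates of size at most $(an^a A)^{s(v)} \lvert \alpha_1 \rvert^{ns(v)}$ at each $v \in S$ (by the Binet decomposition and the triangle inequality), and the hypothesis $n > \tau(\epsilon)$ renders the polynomial factors negligible relative to $Q$, for each pair $(v,i)$ I would choose a real number $c_{iv}$ from a fixed finite grid of $O(m\epsilon^{-1})$ values, subject to $c_{iv} \leq s(v)$, so that $\lvert l_{iv}(X_n) \rvert_v \leq Q^{c_{iv}}$. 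A lower bound on $c_{iv}$, obtained from the product formula applied to the nonzero quantity $l_{iv}(X_n)$, makes this grid finite. Taking $-\log_Q$ of \eqref{eq:2} and absorbing the discretization error yields $\sum_{v \in S,\, i} c_{iv} \leq -\epsilon/(3m)$.

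Next, I would form the symmetric convex body $C = \prod_v C_v \subset V_K^m$ with $C_v = \{y \in K_v^m : \lvert l_{iv}(y) \rvert_v \leq Q^{c_{iv}},\ i = 1, \ldots, m\}$ for $v \in S$ and $C_v = \mathcal{O}_v^m$ otherwise. A direct computation of $\beta^m(C)$ (using the $d$-th-power scaling rule recalled before Lemma \ref{lem1}) together with Lemma \ref{lem1} gives $\lambda_1 \cdots \lambda_m \ll Q^{\epsilon/(3m)} \lvert \Delta_K \rvert^{1/2}$. Since $X_n \in C$, we have $\lambda_1 \leq 1$; while the lower bound in Lemma \ref{lem1} combined with the large negativity of $\sum c_{iv}$ forces $\lambda_m$ to be large. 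A pigeonhole on the ratios $\lambda_{i+1}/\lambda_i$ then produces some $k \in \{1, \ldots, m-1\}$ with $\lambda_{k+1}/\lambda_k$ exceeding an appropriate power of $Q$. I would let $W$ be the $K$-span of $k$ linearly independent points $y_1, \ldots, y_k \in \mathcal{O}_S^m \cap \lambda_k C$; the gap guarantees $X_n \in W$, since otherwise $\{X_n, y_1, \ldots, y_k\}$ would supply $k+1$ independent points in a mild rescaling of $C$, contradicting the lower bound on $\lambda_{k+1}$.

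I would then apply Lemma \ref{lem} to identify $W$ with the $(N-1)$-dimensional subspace $f_k(W) \subset K^N$, where $N = \binom{m}{k}$. For each $k$-subset $\sigma_j = \{i_1 < \cdots < i_k\}$ of $\{1, \ldots, m\}$ and each $v \in S$, I would set $\bar l_{jv} := l_{i_1 v} \wedge \cdots \wedge l_{i_k v}$ and $\bar\gamma_{jv} := \sum_{i \in \sigma_j} c_{iv}$. The wedge estimate \eqref{eq:9} applied to the $y_i$'s, combined with $X_n \in W$, shows that $f_k(W) \subseteq V(N, \bar\gamma, \mathcal{L}, Q)$; the reverse inclusion and the dimension equality $\dim V = N - 1$ follow by combining Lemma \ref{lem'} with the same gap at $\lambda_{k+1}/\lambda_k$. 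Condition \eqref{eq:14'} is verified by noting that each $c_{iv}$ appears $\binom{m-1}{k-1}$ times in $\sum_j \bar\gamma_{jv}$, so the bound $\sum c_{iv} \leq -\epsilon/(3m)$ propagates to $\sum_j \bar\gamma_{jv} \leq -\epsilon/(9m^3)$. Finally, the tuple $(N, \bar\gamma, \mathcal{L})$ is determined entirely by the pair $(k, (c_{iv}))$: with $k \in \{1, \ldots, m-1\}$ and each $c_{iv}$ chosen from a grid of size $O(m\epsilon^{-1})$, the total number of tuples is at most $m \cdot (Cm\epsilon^{-1})^{ms}$, matching the claimed $C'$ after absorbing the absolute constants.

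The principal technical obstacles I anticipate are twofold. First, the gap argument must be calibrated so that the jump between successive minima is provably large enough to force $X_n \in W$; this hinges on the tension between the negativity of $\sum c_{iv}$ (which pushes $\prod \lambda_i$ up) and the trivial upper bound $\lambda_1 \leq 1$ provided by $X_n$ itself, and the grid spacing must be chosen finely enough to guarantee a usable gap while coarsely enough to keep the counting bound $C'$. Second, verifying $\dim V(N, \bar\gamma, \mathcal{L}, Q) = N - 1$ requires showing that the exterior products of $y_1, \ldots, y_k$ span exactly the desired hyperplane and that no spurious lattice points in $\Pi(N, \bar\gamma, \mathcal{L}, Q)$ enlarge this span --- here the determinant lower bound of Lemma \ref{lem'} plays the decisive role.
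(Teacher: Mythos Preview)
Your outline captures the geometry-of-numbers skeleton, but two essential technical ingredients are missing, and without them the argument does not close.

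First, to exhibit $f_k(W)=V(N,\overline\gamma,\mathcal L,Q)$ you must produce $N-1$ linearly independent vectors of $\Pi(N,\overline\gamma,\mathcal L,Q)$ spanning $f_k(W)$. By the definition of $f_k$ (the remark after Lemma~\ref{lem}), these are $(m-k)$-fold wedges $\hat v_j=v_{i_1}\wedge\cdots\wedge v_{i_{m-k}}$ taken from a \emph{full} basis $v_1,\ldots,v_m$ of $K^m$ extending your $y_1,\ldots,y_k$; the single $k$-fold wedge $y_1\wedge\cdots\wedge y_k$ you implicitly form is the Pl\"ucker point of $W$, not a basis of the hyperplane $f_k(W)$, and your forms $\bar l_{jv}$ should correspondingly be $(m-k)$-fold wedges (suitably normalised so that $|\bar l_{jv}|_v=1$), not $k$-fold ones. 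More importantly, taking $v_{k+1},\ldots,v_m$ from $\lambda_{k+1}C,\ldots,\lambda_m C$ gives only $|l_{iv}(v_j)|_v\lesssim \lambda_j^{s(v)}Q^{c_{iv}}$, which is too crude to push all $\hat v_j$ with $j\le N-1$ into $\Pi(Q)$. The paper inserts a Davenport-type step (Lemma~\ref{lem4}) that replaces the $g_j$ by vectors $v_j$ satisfying $|l_{k_v(i),v}(v_j)|_v\lesssim \lambda_{\min(i,j)}^{s(v)}|l_{k_v(i),v}(X_n)|_v$ at each archimedean $v$, for certain permutations $k_v$; the $\min(i,j)$ is precisely what makes the determinant bound in Lemma~\ref{lem7} go through, and the permutations $k_v$ contribute an extra factor $(m!)^{r_1+r_2}$ to the count that has to be absorbed into $C'$.

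Second, your choice $\bar\gamma_{jv}=\sum_{i\in\sigma_j}c_{iv}$ omits the successive-minima contribution entirely. The size of $\hat v_j$ is governed not only by the $Q^{c_{iv}}$ but also by the $\lambda_i$'s, so the correct $\gamma_{iv}$ must combine discretised versions of \emph{both}: the paper's formula (just before Lemma~\ref{lem6}) involves $\bar e_{iv}$ and $\bar d_i$, with a special clause at $i=N$ using $\bar d_{N-1}$, and this is what makes Lemma~\ref{lem6} yield $\sum\gamma_{iv}\le-\epsilon/(9m^3)$. This in turn forces you to discretise the $\lambda_i(n)$ themselves into exponents $d_i$ on a fixed grid (Lemma~\ref{lem5}), so that the gap index $k$---selected via $d_{k+1}-d_k\ge 1/m^2$ with $d_{k+1}>0$---depends only on the finite tuple $(c;d)\in\Gamma$ and not on $n$. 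With your pigeonhole on the actual ratios $\lambda_{i+1}/\lambda_i$, the index $k$ (hence $N$, $\overline\gamma$, $\mathcal L$) can vary with $Q$ even when the discretised $(c_{iv})$ is fixed, so you cannot conclude that $(N,\overline\gamma,\mathcal L)$ lies in a set $\mathcal C$ independent of $X_n$.
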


The next result gives us an effective upper bound of the number of vector spaces $V(N, \overline{\gamma}, \mathcal{L}, Q)$ when $Q$ is large enough.

\begin{thm} \label{thm3}
Fix a tuple $(N, \overline{\gamma}, \mathcal{L}) $ as in Theorem \ref{thm2}.  Then  there exist a collection of $(N-1)$-dimensional linear subspaces of $K^N$ of cardinality at most  $$C" =  2^{11m+34}s^2 \epsilon^{-4} m^{12}$$ such that for every $Q$ with 
 \begin{equation} \label{eq:13'}
{\rm dim}_K (V(N, \overline{\gamma}, \mathcal{L}, Q))= N-1; \quad Q> ( 9m!)^{e^{2C"}},
 \end{equation}
the vector spaces $V(N, \overline{\gamma}, \mathcal{L}, Q)$ belongs to this collection.
 \end{thm}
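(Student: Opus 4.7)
The strategy is to analyze, as $Q$ grows, the adelic convex body
$$
C(Q) = \prod_{v \in M_K} C_v(Q),
$$
where $C_v(Q) = \{y \in K_v^N : |\overline{l}_{iv}(y)|_v \leq Q^{\gamma_{iv}},\ 1\leq i\leq N\}$ for $v\in S$ and $C_v(Q)=\mathcal{O}_v^N$ for $v\notin S$, together with its successive minima $\lambda_1(Q)\leq\cdots\leq\lambda_N(Q)$. By construction $\Pi(N,\overline{\gamma},\mathcal{L},Q)=\psi^{-1}(C(Q))$, so $V(N,\overline{\gamma},\mathcal{L},Q)$ is the $K$-span of this intersection. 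A direct volume computation, using the linear independence of $\overline{l}_{1v},\dots,\overline{l}_{Nv}$ at each $v\in S$ and the hypothesis $\sum_{v,i}\gamma_{iv}\leq -\epsilon/(9m^3)$ from (\ref{eq:14'}), yields
$$
\beta^N(C(Q))^{1/d} \;\leq\; \kappa(\mathcal{L})\cdot Q^{-\epsilon/(9m^3)},
$$
with $\kappa(\mathcal{L})$ independent of $Q$. Lemma \ref{lem1} then gives the key inequality
$$
\lambda_1(Q)\cdots\lambda_N(Q) \;\geq\; c_0\cdot Q^{\epsilon/(9m^3)}
$$
for an explicit $c_0>0$ depending on $m$, $s$ and $|\Delta_K|$.

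Since $\dim_K V(N,\overline{\gamma},\mathcal{L},Q)=N-1$, there are $N-1$ independent vectors in $\Pi(N,\overline{\gamma},\mathcal{L},Q)$, hence $\lambda_{N-1}(Q)\leq 1$. Combined with the above, this forces $\lambda_N(Q)\geq c_0\,Q^{\epsilon/(9m^3)}$, which is huge once $Q>(9m!)^{e^{2C''}}$. Picking a Minkowski basis $y_1,\dots,y_{N-1}\in V(N,\overline{\gamma},\mathcal{L},Q)$ realizing the first $N-1$ minima, the hyperplane $V(N,\overline{\gamma},\mathcal{L},Q)$ is then encoded by the dual vector $c(Q):=(y_1\wedge\cdots\wedge y_{N-1})^*\in K^N$ via $V=\{y : c(Q)\cdot y=0\}$; Lemmas \ref{lm} and \ref{lem'} (with $p=N-1$) control $|c(Q)|_v$ in terms of the $\lambda_i(Q)$, and the product formula supplies an explicit height bound on $c(Q)$.

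The main combinatorial step is to count the distinct hyperplanes $V(Q)$ arising as $Q$ varies over admissible parameters. On the logarithmic scale, each $\log\lambda_i(Q)$ is a piecewise linear function of $\log Q$ whose slopes lie in the finite set $\{\gamma_{iv}:v\in S,\ 1\leq i\leq N\}$, and the subspace $V(Q)$ depends only on the combinatorial ``pattern'' of which coordinates in a Minkowski basis are short. A pigeonhole argument, tracking the transitions at which two successive minima coalesce or exchange dominance, bounds the number of such patterns by a polynomial in $m$, $s$ and $\epsilon^{-1}$; the explicit constants $11m+34$, $s^2$, $m^{12}$ and $\epsilon^{-4}$ in $C''$ come from enumerating admissible slope vectors and from using the strict upper bound on $\sum_{v,i}\gamma_{iv}$ to control how often the critical gap between $\lambda_{N-1}(Q)$ and $\lambda_N(Q)$ can open and close. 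The hardest part is precisely this parametric bookkeeping: one must show that past the large threshold $(9m!)^{e^{2C''}}$ the Minkowski basis of $C(Q)$ can reorganize itself only polynomially many times, and each reorganization produces at most one new span of the first $N-1$ basis vectors, so that the overall number of subspaces is at most $C''$.
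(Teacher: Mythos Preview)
Your setup through the volume computation is fine and essentially reproduces the paper's Lemma \ref{lem9"}: from $\lambda_{N-1}(Q)\le 1$ and the Minkowski--Bombieri--Vaaler inequality you do get $\lambda_N(Q)\gg Q^{\epsilon/(9m^3)}$, and this is indeed how one shows $H(V(Q))\ge Q^{\delta/3s}$ for the relevant $Q$. But the argument breaks down at the counting step.

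The claim that $\log\lambda_i(Q)$ is piecewise linear in $\log Q$ with slopes in the finite set $\{\gamma_{iv}\}$ is not justified (the successive minima of a family of adelic bodies depend on the lattice in a much more delicate way than on the defining inequalities alone), and the assertion that $V(Q)$ ``depends only on the combinatorial pattern of which coordinates are short'' is vague to the point of being circular: this is precisely the statement of the theorem. Your ``pigeonhole argument tracking transitions at which two successive minima coalesce'' is not an argument but a restatement of what has to be proved. Bounding the number of distinct hyperplanes $V(Q)$ is exactly the quantitative Subspace Theorem, and no elementary parametric bookkeeping is known to accomplish it.

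The paper proceeds instead by the classical Schmidt--Evertse machinery. One argues by contradiction: assuming more than $C''$ distinct subspaces $V(Q)$, a gap principle (Lemma \ref{lem8}) produces reals $Q_1<\cdots<Q_l$ with $Q_{h+1}\ge Q_h^{4l^2/\theta}$ and pairwise distinct $V(Q_h)$, where $l\asymp\theta^{-2}\log(2Ns)$ and $\theta=\delta/(30N^3)$. One then constructs an auxiliary polynomial $F\in\Gamma(d_1,\dots,d_l)$ via the Index/Polynomial Theorem (Lemma \ref{lem10}), checks that the height growth of $V(Q_h)$ forces the hypotheses of a generalized Roth Lemma (Lemma \ref{lem8''} and its grid version Lemma \ref{lem9}), obtains points $x_h$ in grids inside $V(Q_h)$ with $F_{\mathbf i}(x_1,\dots,x_l)\ne 0$ for some $(\mathbf i/d)<2l\theta$, and finally estimates $\prod_{v\in S}|F_{\mathbf i}(x)|_v<1$, contradicting the product formula. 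The constants in $C''$ come from the choice of $l$ and $\theta$ and from the explicit height bounds in Lemma \ref{lem10}, not from any enumeration of slope vectors.
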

 
By combining Lemma \ref{lem}, Theorem \ref{thm2} and Theorem \ref{thm3}, we easily deduce Theorem \ref{thm1}. Indeed, we notice that  lemma \ref{lem} implies  the map $f_k$ from the collection of $k$-dimensional subspaces of $K^m$ to the collection of  $(N-1)$-dimensional subspaces of $K^N$ such that, for any $k$-dimensional subspace $V$ of  basis  $\{a_1, \cdots, a_k\}$  and $a_{k+1}, \cdots a_n$ are any vectors such that $\{a_1, \cdots, a_n\}$  is a basis of $K^m,$ then   $f_k(V)$ is the subspace generated by $\{A_1, \cdots, A_{N-1}\}$, with $N =\binom{m}{m-k}$ with $A_i= a_{i_1} \wedge a_{i_2}  \wedge \cdots \wedge a_{i_{m-k}}$ is injective.  Let   $0 <  \epsilon < 1/ 12$ and $X_n \in  \mathcal{N}$ with $n> \tau(\epsilon)$ solution of inequality \eqref{eq:2} such that $l_{iv}(X_n) \neq 0$ for $v \in S,~~ i=1, \cdots, m.$ Since  $n> \tau(\epsilon)$, then we deduce $$\vert \alpha_1 \vert ^{3mn} \geq ( 9m!)^{e^{2C"}} \quad \mbox{with} \quad C" = 2^{11m+34}s^2 \epsilon^{-4} m^{12}.$$ It follows from Theorem \ref{thm2} the existence of a proper subspace $W$ of $K^m$ and a  tuple $(N, \overline{\gamma}, \mathcal{L})$  independent of $n$   such that $$X_n \in W, \quad f_k(W) = V(N, \overline{\gamma}, \mathcal{L}, Q), \quad {\rm dim}_K (V(N, \overline{\gamma}, \mathcal{L}, Q))= N-1, \quad Q \geq  ( 9m!)^{e^{2C"}}.$$  The cardinality of the collection of such tuples $(N, \overline{\gamma}, \mathcal{L})$ does not exceed $C':= (64em^42^m \epsilon^{-1})^{ms+m}.$ For each tuple $(N, \overline{\gamma}, \mathcal{L})$, we deduce from Theorem \ref{thm3}, that the collection of vector spaces $V(N, \overline{\gamma}, \mathcal{L}, Q)$ satisfying the properties $$ {\rm dim}_K (V(N, \overline{\gamma}, \mathcal{L}, Q))= N-1 \quad Q\geq  ( 9m!)^{e^{2C"}}$$ does not exceed $C"$. Since $f_k$ is injective, we obtain the set of proper subspace $W$ of $K^m$ for which $X_n \in W$ does not exceed $$C'C" =  (64em^42^m \epsilon^{-1})^{ms+m} \cdot 2^{11m+34}s^2 \epsilon^{-4} m^{12} \leq   s^2 ( m^{7} 2^{4m+17} \epsilon ^{-2})^{ms+m}.$$ By adding the collection of $ms$ proper subspaces 
associated to the collection of $\{l_{iv}, ~ v\in S,~ i=1,\cdots, m\},$  we conclude that the set of $X_n \in  \mathcal{N}$ with $n> \tau(\epsilon)$ solution of inequality \eqref{eq:2} is contained in at most $$ms + s^2 ( m^{7} 2^{4m+17} \epsilon ^{-2})^{ms+m} \leq 2\cdot s^2 ( m^{7} 2^{4m+17} \epsilon ^{-2})^{ms+m}$$ proper subspaces in $K^m.$

\section{Proof of Theorem \ref{thm2}}\label{sec3}

We prove this result using the same machinery as in the proof of Theorem B in \cite{E}, with the difference that the norm vector in $K^m$ that we are working with it's not a Euclidean norm that  Evertse  consider in his paper. Let   $0 <  \epsilon < 1/ 12$ and $X_n \in  \mathcal{N}$ with $n> \tau(\epsilon)$ solution of inequality \eqref{eq:2} such that $l_{iv}(X_n) \neq 0$ for $v \in S,~~ i=1, \cdots, m.$ Fix  $v \in S$, we define   $\mathcal{O}_v =\{x \in K_v~:~ \vert x \vert_v \leq 1\}$ and the $v$-adic approximation domain $\Pi_v(n)$ of $X_n$ to be the paralellepiped in $K_v^m$ given by  $$\Pi_v(n)= \{ y \in K_v^m ~:~ \vert l_{iv}(y) \vert_v \leq \vert l_{iv}(X_n) \vert_v ~~ \mbox{for} ~~  i=1, \cdots, m\}.$$ This is a compact convex body of $K_v^m$. The approximate domain is defined in $V_K^m$ by 

 $$
  \begin{array}{lcl}
   \Pi (n)   &=& \prod_{v \in S}^{}  \Pi_v(n) \times  \prod_{v \notin S}^{}  \mathcal{O}_v^m\\\\
                &=& \{ y_v \in V_K^m ~:~ \vert l_{iv}(y_v) \vert_v \leq \vert l_{iv}(X_n) \vert_v ~~ \mbox{for} ~~v \in S,
                 ~~\mbox{and} ~~\vert y_v \vert_v \leq 1 ~~ \mbox{for} ~~v \notin S \}.
\end{array}
$$
We denote by $\lambda_1(n), \cdots, \lambda_m(n)$ the successive minima of $\Pi(n)$; $$ R_v(n) =  \left(\prod_{i=1}^{m} \vert l_{iv}(X_n)\vert_v \right)^{-1} ~~ \mbox{and} ~~  R(n)= \prod_{v \in S}^{} R_v(n).$$

\begin{lem} \label{lem3}
Let  $v\in M_K$ and $D_{K_v}$ the discriminant over $K_v$. When $v$ is an finite place over $K$, we denote by $ p$ the rational prime above $v$.  
\begin{enumerate}
\item If $v\notin S,$ then $\beta_v^m(n) := \beta_v^m(\Pi_v(n)) = \beta_v^m(\mathcal{O}_v^m) = \vert D_{K_v} \vert_p^{m/2}$.\\

\item  If $v\in S$  and not archimedean,  then $\beta_v^m(n)  =  \vert D_{K_v} \vert_p^{m/2} \cdot R_v(n)^{-d}$.\\

\item  If $K_v = \mathbb{R}$,  then $\beta_v^m(n)  =  2^m \cdot R_v(n)^{-d}$.\\

\item If $K_v = \mathbb{C}$,  then $\beta_v^m(n)  =  (2\pi)^m \cdot R_v(n)^{-d}$,\\

\item  and we also  have   $$\dfrac{1}{m!} R(n)  \leq \lambda_1(n)  \cdots  \lambda_m(n)  \leq \Delta_K^{m/2d} R(n).$$
\end{enumerate}
 \end{lem}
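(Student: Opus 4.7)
The plan is to compute each local volume $\beta_v^m(\Pi_v(n))$ by direct change of variables, and then feed the resulting adelic volume into the Bombieri--Vaaler generalization of Minkowski's theorem (Lemma \ref{lem1}) to bound the product of successive minima. The key observation for (2)--(4) is that the linear forms defining $\Pi_v(n)$ are either $l_{iv}(y)=y_i$ for every $i$ (when $v\neq\mathrm{id}$) or $l_{1v}(y)=y_1+\cdots+y_m$ together with $l_{iv}(y)=y_i$ for $i\geq 2$ (when $v=\mathrm{id}$); in both cases the matrix of $L_v\colon y\mapsto (l_{1v}(y),\ldots,l_{mv}(y))$ is either the identity or upper triangular with unit diagonal, so $|\det L_v|_v=1$. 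Hence $\Pi_v(n)$ is the image under a measure-preserving linear transformation of the box $\prod_{i=1}^m B_v(0,r_{iv})$ with $r_{iv}=|l_{iv}(X_n)|_v$, and the scaling rule $\beta_v^m(\rho D)=|\det \rho|_v^d\beta_v^m(D)$ reduces everything to computing $\beta_v(B_v(0,r))$.

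For a non-archimedean $v$, the normalization $\beta_v = |D_{K_v}|_p^{1/2}\mu_v$ together with $\mu_v(\mathcal{O}_v)=1$ gives $\beta_v(B_v(0,r)) = r^d|D_{K_v}|_p^{1/2}$. For $K_v=\mathbb{R}$ one has $B_v(0,r) = [-r^d,r^d]$ of Lebesgue measure $2r^d$, and for $K_v=\mathbb{C}$ the ball $\{z:|z|\leq r^{d/2}\}$ has area $\pi r^d$, which doubled gives $2\pi r^d$. Multiplying over $i=1,\ldots,m$ and using $\prod_i r_{iv} = R_v(n)^{-1}$ yields the formulas (2)--(4). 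Part (1) is immediate from $\Pi_v(n)=\mathcal{O}_v^m$ for $v\notin S$ by definition.

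For (5), I would apply Lemma \ref{lem1} to the adelic body $\Pi(n) = \prod_v \Pi_v(n)$. Parts (1)--(4), combined with the standard identity $\prod_{v\,\text{finite}}|D_{K_v}|_p = |\Delta_K|^{-1}$, yield
\[
\beta^m(\Pi(n))^{1/d} = |\Delta_K|^{-m/(2d)}\cdot 2^{mr_1/d}(2\pi)^{mr_2/d}\cdot R(n)^{-1}.
\]
Substituting into the upper bound of Lemma \ref{lem1} and collapsing $r_1+2r_2=d$ gives $\lambda_1\cdots\lambda_m \leq (2/\pi)^{mr_2/d}|\Delta_K|^{m/(2d)}R(n)\leq |\Delta_K|^{m/(2d)}R(n)$. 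Substituting into the lower bound and simplifying similarly produces $\lambda_1\cdots\lambda_m \geq (m!)^{r_2/d-1}\,2^{(m-1)r_2/d}\,R(n)\geq R(n)/m!$, the last inequality holding because $(m!)^{r_2/d}$ and $2^{(m-1)r_2/d}$ are each at least $1$.

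The main obstacle is the bookkeeping in (5): the archimedean prefactors $2^m$ and $(2\pi)^m$, the factor $(\pi^m m!/2)^{r_2/d}$ coming from Lemma \ref{lem1}, and the local/global discriminant identity must all be balanced so that the extraneous constants cancel against the powers of $2/\pi$ produced by $r_1+2r_2=d$. Once $|\det L_v|_v=1$ and the normalization of $\beta_v$ are in place, the local volume computations in (1)--(4) are routine.
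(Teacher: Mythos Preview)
Your proof is correct and follows essentially the same route as the paper's own argument. The only cosmetic difference is that the paper writes the change of variables as a single explicit matrix $\rho$ with $\Pi_v(n)=\rho\,\mathcal{O}_v^m$ and applies $\beta_v^m(\rho D)=|\det\rho|_v^d\beta_v^m(D)$ directly, whereas you factor the same map as $L_v^{-1}\circ\mathrm{diag}$ and compute the product of one-dimensional ball volumes; the computations for part~(5), including the use of $r_1+2r_2=d$ and the discriminant identity, are identical.
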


\begin{proof}
It is easy to see that, for $v \in S,$ we have $$\Pi_v(n)= \rho \mathcal{O}_v^m \quad \mbox{with}\quad \rho = \begin{pmatrix}
l_{1v}(X_n) & l_{2v}(X_n)\cdot \{0,-1\} & \cdots & l_{mv}(X_n) \cdot \{0,-1\} \\
0 & l_{2v}(X_n) & \cdots & 0 \\
\vdots & \vdots  & \ddots &  \vdots\\
0 & 0 & \cdots & l_{mv}(X_n) \\
\end{pmatrix},$$ where $l_{iv}(X_n)\cdot \{0,-1\} = -l_{iv}(X_n)$ if $v = {\rm id}$ and $0$ otherwise. 
Hence, the first four statements  follow from the definition of $\beta_v$ and the fact that $$\beta_v^m(n)= \beta_v^m (\rho \mathcal{O}_v^m) = \vert {\rm det}(\rho) \vert_v^d \beta_v^m(\mathcal{O}_v^m).$$ 
  From B.1.20  and  B.1.21  (page 606 in \cite{BV}), we have $$\vert \Delta_K \vert_p = \prod_{v\mid p}^{} \vert \Delta_{K_v} \vert_p. $$ Together with the product formula give us
 \begin{equation} \label{eq:13''}
\vert \Delta_K \vert^{-1} = \prod_{p}\prod_{v\mid p}^{} \vert \Delta_{K_v} \vert_p.
 \end{equation}
By  the definition of $\beta^m,$ we obtain
 $$
  \begin{array}{lcl}
   \beta^m(n)    &=&2^{mr_1} \cdot (2\pi)^{mr_2} {\displaystyle \prod_{p}\prod_{v\mid p}^{} \vert \Delta_{K_v} \vert_p^{m/2}} \cdot (\prod_{v \in S}^{} R_v(n))^{-d}\\\\
   
                &=& 2^{mr_1} \cdot (2\pi)^{mr_2}  \vert \Delta_{K} \vert^{-m/2} \cdot R(n)^{-d},\\\\
\end{array}
$$
where, for the last equality, we used relation \eqref{eq:13''} and $r_1,~r_2$ correspond respectively to the number of real and complex places.
Together with  Lemma \ref{lem1}, we deduce
$$
  \begin{array}{lcl}
  \lambda_1(n)  \cdots  \lambda_m(n)    &\leq &2^{m}  \beta^m(n)^{-1/d} \\\\
   
                &\leq& 2^{m} \cdot 2^{-mr_1/d} (2\pi)^{-mr_2/d}  \vert \Delta_{K} \vert^{m/2d} \cdot R(n),\\\\
                
                &\leq& \left(\dfrac{2}{\pi}\right)^{mr_2/d} \vert \Delta_{K} \vert^{m/2d} \cdot R(n),\\\\
                
                 &\leq&  \vert \Delta_{K} \vert^{m/2d} \cdot R(n),\\\\

\end{array}
$$
and 

$$
  \begin{array}{lcl}
  \lambda_1(n)  \cdots  \lambda_m(n)    &\geq & \left(\dfrac{\pi^mm!}{2}\right)^{r_2/d} \dfrac{2^m}{m!} \cdot 2^{-mr_1/d} (2\pi)^{-mr_2/d}  \cdot R(n)\\\\
   
                &\geq& \left(\dfrac{m!}{2}\right)^{r_2/d} \dfrac{(2^m)^{r_2/d}}{m!} \cdot R(n),\\\\
                
                &\geq& \dfrac{R(n)}{m!}. \\\\           
 
 \end{array}
$$
Hence the proof is completed.
\end{proof}
We also need the following lemma.
\begin{lem} \label{lem2'}
Let $\lambda >0.$ For each $y \in \psi^{-1} (\lambda \Pi(n)),$ we have

\begin{itemize}
\item  $ \vert y \vert_v \leq  \left( m^2(m-1)! R(n)^{\frac{11}{10\epsilon}}\lambda \right)^{s(v)} $  for  $v \in S.$\\
\item  $\lambda_1(n) \geq m^{-2}(m-1)!^{-1} R(n)^{\frac{-11}{10\epsilon}}.  $\\
\end{itemize}
\end{lem}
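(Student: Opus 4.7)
The plan is to establish the two bounds in sequence: first prove the pointwise estimate on $|y|_v$, then derive the lower bound on $\lambda_1(n)$ from it by a product-formula argument.

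For the first bullet I would begin by unpacking what $y\in\psi^{-1}(\lambda\Pi(n))$ means at each place. For archimedean $v\in S$ one has $|l_{iv}(y)|_v\leq\lambda^{s(v)}|l_{iv}(X_n)|_v$; for non-archimedean $v\in S$ one has only $|l_{iv}(y)|_v\leq|l_{iv}(X_n)|_v$ (no $\lambda$-factor, since $s(v)=0$); and $|y|_v\leq 1$ for $v\notin S$. At a finite $v\in S$, because $\rho=1$ every coefficient of each $P_i$ lies in $\mathcal{O}_K$ and, since the $\alpha_i$'s are algebraic integers (roots of the monic integral characteristic polynomial), we obtain $|l_{iv}(X_n)|_v=|P_i(n)\alpha_i^n|_v\leq 1$. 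Combined with $l_{iv}(x)=x_i$ for such $v$, this yields $|y|_v\leq 1=(m^2(m-1)!R(n)^{11/(10\epsilon)}\lambda)^{s(v)}$ trivially.

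For archimedean $v\in S$ I would first invert the linear forms. If $v\neq{\rm id}$ then $l_{iv}(x)=x_i$, while if $v={\rm id}$ then $x_1=l_{1v}(x)-l_{2v}(x)-\cdots-l_{mv}(x)$ together with $x_i=l_{iv}(x)$ for $i\geq 2$. In either case the ordinary triangle inequality (applied to $|\cdot|_v^{1/s(v)}$) gives $|y|_v\leq m^{s(v)}\max_i|l_{iv}(y)|_v\leq m^{s(v)}\lambda^{s(v)}\max_i|l_{iv}(X_n)|_v$. To bound $\max_i|l_{iv}(X_n)|_v=\max_i|P_i(n)\alpha_i^n|_v$, I would use \eqref{eq:3} together with the fact that every archimedean embedding corresponding to $v$ sends $\alpha_i$ to a root of the characteristic polynomial, and these roots are dominated in modulus by $\alpha_1$; this produces $\max_i|l_{iv}(X_n)|_v\leq(an^aA|\alpha_1|^n)^{s(v)}$. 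Hence $|y|_v\leq(m\lambda\cdot an^aA|\alpha_1|^n)^{s(v)}$.

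The main bookkeeping step is then to absorb $an^aA|\alpha_1|^n$ into $m!\,R(n)^{11/(10\epsilon)}$. From the hypothesis \eqref{eq:2} one has $R(n)>|\alpha_1|^{n\epsilon}$, so $R(n)^{11/(10\epsilon)}>|\alpha_1|^{11n/10}$, and the required inequality reduces to $an^aA\leq m!\,|\alpha_1|^{n/10}$. This is precisely the kind of exponential-dominates-polynomial bound that the assumption $n>\tau(\epsilon)$ is designed to guarantee (the third term in the maximum defining $\tau$ controls exactly such a ratio). The specific exponent $11/(10\epsilon)$, rather than the naive $1/\epsilon$, is what creates the extra room for the polynomial factor to be absorbed; this is the only genuinely delicate point in the argument.

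For the second bullet, take a non-zero $y\in\psi^{-1}(\lambda_1(n)\Pi(n))$, which exists by the definition of $\lambda_1(n)$. Applying the product formula to any non-zero coordinate $y_{i_0}$ yields $1=\prod_{v\in M_K}|y_{i_0}|_v\leq\prod_{v\in M_K}|y|_v$; since $|y|_v\leq 1$ for $v\notin S$ this gives $\prod_{v\in S}|y|_v\geq 1$. Inserting the first bullet with $\lambda=\lambda_1(n)$ and using $\sum_{v\in S}s(v)=\sum_{v\in M_K}s(v)=1$ yields $1\leq m^2(m-1)!R(n)^{11/(10\epsilon)}\lambda_1(n)$, i.e.\ the claimed lower bound on $\lambda_1(n)$.
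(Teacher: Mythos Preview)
Your argument is correct and follows the paper's route: bound $|l_{iv}(y)|_v$ via membership in $\lambda\Pi(n)$, control $|l_{iv}(X_n)|_v$ through $|X_n|_v\le|\alpha_1|^{11ns(v)/10}$ (using $n>\tau(\epsilon)$, then $R(n)>|\alpha_1|^{n\epsilon}$ from \eqref{eq:2}), invert the forms to recover $|y|_v$, and obtain the second bullet from $H(y)\ge 1$ together with $|y|_v\le 1$ for $v\notin S$.

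One small slip: for $v={\rm id}$ your equality $\max_i|l_{iv}(X_n)|_v=\max_i|P_i(n)\alpha_i^n|_v$ is false, since $l_{1,{\rm id}}(X_n)=u_n$, not $P_1(n)\alpha_1^n$; you only get $\max_i|l_{iv}(X_n)|_v\le m^{s(v)}|X_n|_v$ there, costing an extra factor $m^{s(v)}$. This is harmless: once $an^aA$ is absorbed into $|\alpha_1|^{n/10}$ alone (which is what $n>\tau(\epsilon)$ actually delivers) you land at $(m^2\lambda R(n)^{11/(10\epsilon)})^{s(v)}$, well inside the stated bound---the $(m-1)!$ is slack. The paper carries out the inversion via the general dual-basis formula $y=\sum_j l_{jv}(y)\,(l_{1v}\wedge\cdots\wedge\widehat{l_{jv}}\wedge\cdots\wedge l_{mv})^*$ together with the exterior-product estimate $|a_j|_v\le(m-1)!^{s(v)}$, which is where that factor originates; your explicit inversion exploits that these particular $l_{iv}$ differ from the identity in at most one row, a legitimate simplification.
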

\begin{proof}
Let  $y \in \psi^{-1} (\lambda \Pi(n))$. Fix $v \in S$ and $\Delta_v = {\rm det} (l_{1v}, \cdots, l_{mv})$. Clearly $\Delta_v =1.$ Let $a_j$ be the  vector $(l_{1v} \wedge \cdots \wedge  l_{j-1v}  \wedge l_{j+1v} \wedge \cdots \wedge l_{mv})^*$. The fact that $\{a_1, \cdots, a_m\}$ is a basis of $K^m$ implies
 \begin{equation} \label{eq:14}
y = \sum_{j=1}^{m} \Delta_v^{-1} l_{jv}(y) a_j  = \sum_{j=1}^{m} l_{jv}(y) a_j.
 \end{equation}
 
 Using the fact $n> \tau(\epsilon),$ one gets $\vert X_n \vert_v \leq (\vert \alpha_1 \vert )^{11ns(v)/10}.$ Namely,  if $v$ is a finite place over $K$, then $\vert X_n \vert_v \leq 1$ as all the coordinates $P_i(n)\alpha_i^n$ of $X_n$ are algebraic integers so we are done.  Otherwise, by using the fact that $\vert \cdot \vert_v^{1/s(v)}$ is the usual absolute value, we get
 $$
\begin{array}{lcl}
\vert X_n\vert_v    &= &  \vert (P_i(n)\alpha_1^n, \cdots, P_m(n)\alpha_m^n) \vert_v \\\\
   
                                    &\leq& \max_i   \vert (P_i(n)\alpha_i^n \vert_v \\\\
                
                                   &\leq& \left(\dfrac{(a+1)A n^a}{\vert \alpha_1 \vert ^{n/10}}\right)^{s(v)} \cdot   (\vert \alpha_1 \vert )^{11ns(v)/10} \\\\         
                                   
                                    &\leq&  (\vert \alpha_1 \vert )^{11ns(v)/10},
 \end{array} 
 $$
  where, for the second inequality, we have used the triangle inequality  of the absolute value and the definition of $A$ in \eqref{eq:b} while for the last inequality, the fact that $n> \tau(\epsilon).$ Therefore
  \begin{equation} \label{eq:15'}
\begin{array}{lcl}
\vert l_{jv} (y) \vert_v    &\leq & \lambda^{s(v)} \vert  l_{jv}(X_n) \vert_v \\\\
   
                                    &\leq&  \left( \lambda m (\vert \alpha_1 \vert)^{11n/10} \right)^{s(v)}\\\\
                
                                   &\leq& \left( \lambda m (R(n))^{11 / (10\epsilon)} \right)^{s(v)}.

 \end{array}
 \end{equation}

 By relation \eqref{eq:9},  we have $$\vert a_j \vert_v = \vert l_{1v} \wedge \cdots \wedge  l_{j-1v}  \wedge l_{j+1v} \wedge \cdots \wedge l_{mv} \vert_v \leq (m-1)!^{s(v)}.$$ Together with  \eqref{eq:14} and \eqref{eq:15'}, we obtain
 $$
 \begin{array}{lcl}
\vert y \vert_v    &\leq & m^{s(v)} \max_j \{\vert l_{jv} (y) a_j \vert_v\}\\
   
                                    &\leq& \left( \lambda m^2 (m-1)! (R(n))^{11 / (10\epsilon)} \right)^{s(v)}. \\
                
 \end{array}
 $$
 Hence the proof of the first statement  is completed.  Let  $y \in \psi^{-1} (\lambda_1(n) \Pi(n))$ with $y \neq 0.$ Then  $\vert y \vert_v   \leq 1$ for $v \notin S$ and since $H(y) \geq 1$, it follows that 
 $$1 \leq H(y) \leq \prod_{v \in S}^{}  \vert y \vert_v \leq m^2 (m-1)! \lambda_1(n)(R(n))^{11/(10 \epsilon)}$$ which implies the second statement of the lemma.
\end{proof}

The following lemma is a particular case of Lemma 15 of Evertse \cite{E}.

\begin{lem} \label{lem4}
Let $ b_1, \cdots, b_m$ be linearly independent vectors in $K^m$. For each infinite place $v$ on $K$, let  $ \mu_{1v}, \cdots, \mu_{mv}$ be reals numbers with $$0< \mu_{1v} \leq \mu_{2v} \leq \cdots \leq \mu_{mv}.$$
Suppose $\vert l_{iv} (b_j)\vert_v \leq \mu_{jv}$ for $1 \leq i,~j \leq m, ~~ v \mid \infty.$ Then there are permutations $k_v$ of $\{1, \cdots, m\}$ for each infinite place $v$ on $K$ and vectors 
$$v_1=b_1, ~~ v_i =  \sum_{j=1}^{i-1} \epsilon_{ij} v_j + b_i  \quad \mbox{with} \quad  \epsilon_{ij} \in \mathcal{O}_K$$ such that 
$$\vert l_{k_v(i), v} (v_j)\vert_v \leq   (2d\vert \Delta_K\vert^{1/2})^{s(v) (i+j)} \min \{\mu_{iv}, \mu_{jv}\},  \quad \mbox{for} \quad     1 \leq i,~j \leq m, ~~ v \mid \infty .$$
 \end{lem}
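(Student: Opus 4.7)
I would prove the lemma by induction on $i$, constructing $v_i$ and extending each permutation $k_v$ one index at a time. The base case $i=1$ is immediate: take $v_1=b_1$ and let $k_v(1)$ be any index, so that the hypothesis $|l_{k_v(1),v}(v_1)|_v\le \mu_{1v}$ gives the required bound with constant $1$.

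For the inductive step, assume $v_1,\dots,v_{i-1}$ and $k_v|_{\{1,\dots,i-1\}}$ are built with the claimed estimates. The goal is to choose $\epsilon_{ij}\in\mathcal{O}_K$ (for $j<i$) so that $v_i = b_i+\sum_{j<i}\epsilon_{ij}v_j$ has $|l_{k_v(j),v}(v_i)|_v$ small for every $j<i$ and every $v\mid\infty$ simultaneously. Over each local field $K_v$ separately, the system \emph{$l_{k_v(j),v}(v_i)=0$ for $j<i$} is triangular in unknowns $\epsilon_{ij,v}$, and Cramer's rule produces explicit ``ideal'' solutions $\widetilde\epsilon_{ij,v}\in K_v$ whose sizes are controlled by the inductive bounds on $|l_{k_v(j),v}(v_p)|_v$ for $p\le j$. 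The catch is that these $\widetilde\epsilon_{ij,v}$ depend on $v$ and are generally not in $\mathcal{O}_K$, so one must round them to algebraic integers simultaneously across all infinite places.

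The bridging tool is a standard Minkowski-type consequence of Lemma \ref{lem1}: for any target $(z_v)_{v\mid\infty}\in\prod_{v\mid\infty}K_v$ there exists $\epsilon\in\mathcal{O}_K$ with $|\epsilon-z_v|_v\le (2d|\Delta_K|^{1/2})^{s(v)}$ at every infinite $v$. This follows by applying Lemma \ref{lem1} to the adelic convex body with archimedean factors $\{|\,\cdot\,-z_v|_v\le c\}$ and finite factors $\mathcal{O}_v$, and using $\sum_{v\mid\infty}ds(v)=d$ to balance the volume against the discriminant. Applying this approximation once per pair $(i,j)$, with target $\widetilde\epsilon_{ij,v}$, yields the required $\epsilon_{ij}\in\mathcal{O}_K$; substituting back and propagating the rounding error through the triangular system produces bounds of the stated shape $(2d|\Delta_K|^{1/2})^{s(v)(i+j)}\min\{\mu_{iv},\mu_{jv}\}$. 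Finally one chooses $k_v(i)$ among the $m-i+1$ indices $r\notin\{k_v(1),\dots,k_v(i-1)\}$ by minimizing $|l_{r,v}(v_i)|_v$; linear independence of the forms $\{l_{rv}\}_r$, together with the hypothesis $|l_{rv}(b_i)|_v\le \mu_{iv}$ and the just-verified bounds for $j<i$, gives via a pigeonhole expansion of $v_i$ along the dual basis of the unused forms the remaining diagonal estimate with the correct exponent.

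The main obstacle, as in Evertse's original argument, is not any single estimate but the bookkeeping: the approximation steps must be ordered so that each $\epsilon_{ij}$ is rounded against a target depending only on $v_1,\dots,v_{i-1}$, and one must then verify that the rounding errors accumulate only linearly in $i+j$ rather than exponentially. Getting the exponent $s(v)(i+j)$ exactly, rather than something like $s(v)\cdot 2^{i+j}$, is what forces the careful triangular ordering and is the delicate point of the proof.
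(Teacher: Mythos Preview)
The paper does not actually prove this lemma; immediately before the statement it says ``The following lemma is a particular case of Lemma~15 of Evertse~\cite{E}'' and gives no argument. Your proposal is, in outline, precisely Evertse's proof of that lemma: induction on~$i$, at each step solving at every infinite place the (essentially triangular) linear system $l_{k_v(j),v}(v_i)=0$ for $j<i$ to obtain local targets $\widetilde\epsilon_{ij,v}$, then using the Minkowski/discriminant bound (your consequence of Lemma~\ref{lem1}) to replace each family $(\widetilde\epsilon_{ij,v})_{v\mid\infty}$ by a single $\epsilon_{ij}\in\mathcal{O}_K$, and finally selecting $k_v(i)$ among the unused indices. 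So method-wise you are aligned with the source the paper defers to.

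Two small remarks on execution. First, in Evertse's argument the order in which the $\epsilon_{ij}$ are chosen within a fixed $i$ matters: one processes $j=i-1,i-2,\ldots,1$, so that at the moment $\epsilon_{ij}$ is rounded only the already-fixed $\epsilon_{i,i-1},\ldots,\epsilon_{i,j+1}$ enter the target, and the diagonal entry $l_{k_v(j),v}(v_j)$ (which by induction is bounded below as well as above, via a determinant argument) is what one divides by. This is exactly what keeps the accumulated error linear in $i+j$; your sketch correctly flags this as the delicate point but you should make the descending order and the lower bound on the diagonal explicit when you write it out. Second, the step ``choose $k_v(i)$ by minimizing $|l_{r,v}(v_i)|_v$ over the remaining indices $r$'' is not quite how Evertse does it: rather, $k_v(i)$ is the index for which the corresponding diagonal minor of $(l_{k_v(p),v}(v_q))_{p,q\le i}$ is largest, which is what guarantees the needed lower bound on $|l_{k_v(i),v}(v_i)|_v$ for the next step of the induction. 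A pure minimization of $|l_{r,v}(v_i)|_v$ would give the upper bound you want for the new column but might leave the diagonal entry too small to invert at step $i+1$.
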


 Choose linear independent vectors $g_1, \cdots, g_m$ with $g_j \in \psi^{-1} (\lambda_j \Pi(n))$ for any $j=1, \cdots, m.$ By Lemma \ref{lem2'}, we have  $g_j \in \mathcal{O}_K$ and 
 
\begin{equation*}
\left\{
\begin{aligned}
\vert l_{iv} (g_j)\vert_v &\leq \lambda_j^{s(v)}\vert l_{iv} (X_n)\vert_v& \mbox{for} \quad  v \mid \infty,\\\\
\vert l_{iv} (g_j)\vert_v &\leq \vert l_{iv} (X_n)\vert_v& \mbox{for} \quad v\in S \quad v \nmid \infty.\\\\
\end{aligned}
\right.
\end{equation*}

By applying Lemma \ref{lem4} with the vectors $g_1, \cdots, g_m$  and the linearly forms $l_{iv}(X_n)^{-1}l_{iv}$ ($v \mid \infty, ~~ i=1, \cdots, m$);  the real  numbers $\mu_{jv} = \lambda_j^{s(v)}~~ (v \mid \infty, ~~j=1, \cdots, m) $, we infer that there are vectors
$ v_1, \cdots, v_m$ with  
 \begin{equation} \label{eq:15}
v_1=g_1, ~~ v_i =  \sum_{j=1}^{i-1} \epsilon_{ij} v_j + g_i  \quad \mbox{with} \quad  \epsilon_{ij} \in \mathcal{O}_K
 \end{equation}
and permutations $k_v$ of $\{1, \cdots, m\}$ for $v \mid \infty$  such that 
 
 \begin{equation} \label{eq:16}
\vert l_{k_v(i), v} (v_j)\vert_v \leq  \left( (2d\vert \Delta_K\vert^{1/2})^{2m} \lambda_{\min \{i, j\}}\right)^{s(v)}\vert l_{k_v(i),v}(X_n) \vert_v,  \quad \mbox{for} \quad     1 \leq i,~j \leq m, ~~ v \mid \infty. 
 \end{equation} 
 
 Fix $v \in S$, $v \nmid \infty$. Since  $\epsilon_{ij} \in \mathcal{O}_K$ and $g_{j} \in \mathcal{O}_K^m,$ we deduce $$v_j \in \mathcal{O}_K^m, \quad \vert l_{i v} (v_j)\vert_v \leq  \vert l_{iv}(X_n) \vert_v.$$ Hence we have proven the lemma below
 
\begin{lem} \label{lema}
There are linearly independent vectors $v_1, \cdots, v_m \in \mathcal{O}_K^m$ and permutations $k_v$ of $\{1, \cdots, m\}$ for $v \mid \infty$  all depending of $X_n$, with the followings properties: 
\begin{itemize}
\item  for $j=1, \cdots, m$, the vectors $v_1, \cdots, v_j$ belong in the $K$-vector space generated by $ \psi^{-1} (\lambda_j(n) \Pi(n)).$   \\
\item  We also have 
\begin{equation*}
\left\{
\begin{aligned}
\vert l_{k_v(i),v} (v_j)\vert_v &\leq  \left( (2d\vert \Delta_K\vert^{1/2})^{2m} \lambda_{\min \{i, j\}}(n)\right)^{s(v)}\vert l_{k_v(i),v}(X_n) \vert_v, \quad \mbox{for} \quad v\mid \infty& \\\\
\vert l_{iv} (v_j)\vert_v &\leq \vert l_{iv}(X_n) \vert_v \quad  \mbox{for} \quad v\in S, \quad v \nmid \infty.&
\end{aligned}
\right.
\end{equation*}
\end{itemize}
\end{lem}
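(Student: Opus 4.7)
The plan is to build the $v_j$ by first picking vectors that realize the successive minima of $\Pi(n)$ and then running them through Lemma~\ref{lem4} (Evertse's cleanup lemma) to tidy them up. By the definition of $\lambda_1(n)\le\cdots\le\lambda_m(n)$, one can choose linearly independent $g_1,\dots,g_m$ with $g_j\in\psi^{-1}(\lambda_j(n)\Pi(n))$. Reading this membership place-by-place gives two facts: for every $v\notin S$, $|g_j|_v\le 1$, which together with Lemma~\ref{lem2'} forces $g_j\in\mathcal{O}_K^m$; and for every $v\in S$, $|l_{iv}(g_j)|_v\le\lambda_j(n)^{s(v)}|l_{iv}(X_n)|_v$, which collapses to $|l_{iv}(g_j)|_v\le|l_{iv}(X_n)|_v$ at the finite places since there $s(v)=0$.

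Next I would invoke Lemma~\ref{lem4}. For each infinite place $v$, feed in the normalized linear forms $\widetilde{l}_{iv}(x):=l_{iv}(X_n)^{-1}l_{iv}(x)$ (well defined because $l_{iv}(X_n)\ne 0$ by hypothesis) together with the real numbers $\mu_{jv}:=\lambda_j(n)^{s(v)}$; these satisfy $\mu_{1v}\le\cdots\le\mu_{mv}$ because the successive minima are nondecreasing, and the hypothesis $|\widetilde{l}_{iv}(g_j)|_v\le\mu_{jv}$ was just checked. Lemma~\ref{lem4} produces $\epsilon_{ij}\in\mathcal{O}_K$ with $v_1=g_1$ and $v_i=\sum_{j<i}\epsilon_{ij}v_j+g_i$, together with permutations $k_v$ of $\{1,\dots,m\}$ for $v\mid\infty$, such that
\[
|\widetilde{l}_{k_v(i),v}(v_j)|_v\le (2d|\Delta_K|^{1/2})^{s(v)(i+j)}\min\{\mu_{iv},\mu_{jv}\}.
\]
Multiplying through by $|l_{k_v(i),v}(X_n)|_v$, using $i+j\le 2m$ and $\min\{\mu_{iv},\mu_{jv}\}=\lambda_{\min\{i,j\}}(n)^{s(v)}$, yields exactly the bound stated at the infinite places.

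For the remaining finite places $v\in S$, I would argue by induction on $j$ using the ultrametric inequality at $v$: since $\epsilon_{ij}\in\mathcal{O}_K$ forces $|\epsilon_{ij}|_v\le 1$ and since $|l_{iv}(g_k)|_v\le|l_{iv}(X_n)|_v$ is already known, applying the strong triangle inequality to $l_{iv}(v_j)=\sum_{k<j}\epsilon_{jk}l_{iv}(v_k)+l_{iv}(g_j)$ preserves the target bound. Integrality $v_j\in\mathcal{O}_K^m$ is automatic from $g_i\in\mathcal{O}_K^m$ and $\epsilon_{ij}\in\mathcal{O}_K$, and linear independence of the $v_j$ is immediate because the change of basis from $(g_i)$ to $(v_i)$ is lower triangular with $1$'s on the diagonal. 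That same triangular structure shows $v_1,\dots,v_j$ lie in the $K$-span of $g_1,\dots,g_j$, which by construction sits inside the $K$-span of $\psi^{-1}(\lambda_j(n)\Pi(n))$.

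The only real obstacle is the bookkeeping in matching the input data that Lemma~\ref{lem4} expects with the data actually in hand: the normalization $\widetilde{l}_{iv}=l_{iv}(X_n)^{-1}l_{iv}$ has to be introduced at each infinite place, and one must check that the factor $(2d|\Delta_K|^{1/2})^{s(v)(i+j)}$ coming out of that lemma is absorbed into the uniform factor $(2d|\Delta_K|^{1/2})^{2m s(v)}$ appearing in the claim. Nothing deeper is required; everything else is formal once the setup is in place.
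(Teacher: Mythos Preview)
Your proposal is correct and follows essentially the same route as the paper: pick $g_j\in\psi^{-1}(\lambda_j(n)\Pi(n))$, note $g_j\in\mathcal{O}_K^m$ via Lemma~\ref{lem2'} and the definition of $\Pi(n)$, apply Lemma~\ref{lem4} to the normalized forms $l_{iv}(X_n)^{-1}l_{iv}$ with $\mu_{jv}=\lambda_j(n)^{s(v)}$ to produce the $v_j$, and then handle the finite places and integrality using $\epsilon_{ij}\in\mathcal{O}_K$ and the ultrametric inequality. Your write-up is in fact slightly more explicit than the paper's about the triangular change of basis giving linear independence and the first bullet point, but there is no substantive difference in method.
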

In the next lemma, we make a first step toward the proof of Theorem \ref{thm1}.

\begin{lem} \label{lem5}
There is a set $\Gamma$ of cardinality $$\vert \Gamma \vert \leq (m!)^{-s} (64em^42^m \epsilon^{-1})^{ms+m}$$  consisting of the tuples of real numbers  \\
 $$(c;d) = (c_{iv} ~:~~v \in S, ~~ i=1, \cdots, m; ~~ d_i~:~~i=1, \cdots, m)$$ 
with 
\begin{itemize}
\item $c_{iv} \leq \dfrac{12s(v)}{10\epsilon}$   for $v \in S, ~~i=1, \cdots, m.$
\item  $d_1 \leq 0, ~~ \dfrac{-12}{10\epsilon} \leq d_1 \leq \cdots \leq d_m$ 
\end{itemize}
such that for every $X_n$ solution of inequality \eqref{eq:2}, with $n > \tau(\epsilon)$ and  $l_{iv}(X_n) \neq 0$ for $v \in S,~~ i=1, \cdots, m$; there is a tuple \\
$(c;d) \in \Gamma$ with $$R(n)^{c_{iv} - \{1/s c(m)\}} < \vert l_{iv}(X_n) \vert_v  \leq  R(n)^{c_{iv} } $$ and 
$$R(n)^{d_{i} }  \leq  \lambda_i(n)   \leq  R(n)^{d_{i}+\{1/c(m)\}} $$ for $v \in S, ~~ i=1, \cdots, m$ with $c(m) = 4m^32^m.$
\end{lem}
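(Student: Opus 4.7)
The plan is to prove Lemma \ref{lem5} by a standard geometric discretization argument in logarithmic coordinates with base $R(n)$. The three ingredients I need are: a priori two-sided bounds on each of the quantities $|l_{iv}(X_n)|_v$ and $\lambda_i(n)$; a coarse discretization of each of them onto an arithmetic grid; and a combinatorial counting argument.

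For the a priori bounds, I would first apply Lemma \ref{lem2'} to $X_n \in \Pi(n)$, obtaining $|X_n|_v \leq |\alpha_1|^{11ns(v)/10}$ up to absorbable constants. Since the forms $l_{iv}$ have coefficients in $\{0,\pm 1\}$, inequality \eqref{eq:7} then gives $|l_{iv}(X_n)|_v \leq m^{s(v)} |\alpha_1|^{11ns(v)/10}$. Combining with $R(n) \geq |\alpha_1|^{n\epsilon}$ (a consequence of \eqref{eq:2}) and using $n > \tau(\epsilon)$ to absorb the $m^{s(v)}$ factor yields $|l_{iv}(X_n)|_v \leq R(n)^{12 s(v)/(10\epsilon)}$. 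A matching lower bound comes from the product formula together with the observation that every coordinate $P_j(n)\alpha_j^n$ is an $S$-integer, so $|l_{iv}(X_n)|_v \leq 1$ for $v \notin S$ and $\prod_{v \in S} |l_{iv}(X_n)|_v \geq 1$. For $\lambda_i(n)$, the second statement of Lemma \ref{lem2'} gives $\lambda_1(n) \geq R(n)^{-12/(10\epsilon)}$, and the trivial containment $X_n \in \Pi(n)$ (since all coordinates are $S$-integers) yields $\lambda_1(n) \leq 1$, hence $d_1 \leq 0$. Finally, the Minkowski-type bound $\lambda_1 \cdots \lambda_m \leq |\Delta_K|^{m/2d} R(n)$ from Lemma \ref{lem3}, combined with the lower bounds on $\lambda_1,\dots,\lambda_{m-1}$, controls $\lambda_m(n)$ from above.

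Next, I would define $c_{iv}$ as the unique multiple of $1/(sc(m))$ satisfying $R(n)^{c_{iv}-1/(sc(m))} < |l_{iv}(X_n)|_v \leq R(n)^{c_{iv}}$, and $d_i$ as the unique multiple of $1/c(m)$ with $R(n)^{d_i} \leq \lambda_i(n) \leq R(n)^{d_i+1/c(m)}$. The two-sided inequalities in the statement of the lemma, and the bounds $c_{iv} \leq 12s(v)/(10\epsilon)$, $-12/(10\epsilon) \leq d_1 \leq 0 \leq \cdots \leq d_m$, are then immediate from the a priori bounds above. For each $(v,i)$, the number of admissible values of $c_{iv}$ is at most (range length) $\times sc(m)$, and similarly for $d_i$ with grid density $c(m) = 4m^3 2^m$; multiplying over the $ms$ pairs and the $m$ indices, using $\sum_{v \in S} s(v) \leq 1$, and dividing by $m!$ for the ordering constraint $d_1 \leq \cdots \leq d_m$, a routine arithmetic estimate gives the stated bound $(m!)^{-s}(64em^4 2^m \epsilon^{-1})^{ms+m}$.

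The main obstacle will be the bookkeeping in the counting step: one has to balance the individual ranges (which depend asymmetrically on $s(v)$ and $m/\epsilon$) against the grid density $sc(m)$, and to extract both the sharp exponent $ms+m$ and the $(m!)^{-s}$ factor. In particular, justifying the $(m!)^{-s}$ discount cleanly probably requires exploiting the product-formula constraint $\sum_{v,i} c_{iv} \approx -1$, which reduces the effective number of independent $c$-coordinates by one per place and symmetrizes the count; this is the most delicate point.
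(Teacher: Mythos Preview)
Your overall strategy---pass to logarithmic coordinates with base $R(n)$, establish two--sided a~priori bounds, then discretize onto a grid of mesh $1/(sc(m))$ (for the $c_{iv}$) and $1/c(m)$ (for the $d_i$)---is exactly the shape of the paper's argument. The a~priori bounds you list are all correct and obtained in the paper in the same way.

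The gap is in the counting step. What you call ``a matching lower bound'' from the product formula is a bound on $\prod_{v\in S}|l_{iv}(X_n)|_v$, not on each factor; from it one can only deduce $c_{iv}\ge -12/(10\epsilon)$ (using the per--$i$ product formula together with the upper bounds $c_{iw}\le 12s(w)/(10\epsilon)$ for $w\ne v$). With that individual range the box count you describe gives, per pair $(i,v)$, at most $1+24\,sc(m)/(10\epsilon)$ admissible grid values, and multiplying over the $ms$ pairs produces a bound of order $(C\,s\,m^{3}2^{m}\epsilon^{-1})^{ms}$ with an unavoidable extra factor $s^{ms}$. This is \emph{not} absorbed by the stated bound $(m!)^{-s}(64em^{4}2^{m}\epsilon^{-1})^{ms+m}$ once $s$ is large, so ``a routine arithmetic estimate'' does not close the argument.

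What the paper does instead is normalize $u_{iv}:=|l_{iv}(X_n)|_v\,R(n)^{-12s(v)/(10\epsilon)}\le 1$ with $\prod_{i,v}u_{iv}\ge R(n)^{-2m/\epsilon}$, and then invoke Lemma~9 of Evertse~\cite{E} (a simplex--covering lemma): given reals $0<u_j\le 1$ with $\prod_j u_j\ge A^{-1}$, one can cover all such tuples by at most $(2e+2e/\theta)^{k}$ boxes of logarithmic side $\theta$. Applied with $\theta=\tfrac{\epsilon}{2c(m)}$ and $k=ms$ this gives $|\Gamma_1|\le (2e+2ec(m)\epsilon^{-1})^{ms}$ with no $s$ in the base. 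The same lemma applied to $v_j(n):=R(n)^{-12/(10\epsilon)}\lambda_j(n)^{-1}$ gives $|\Gamma_2|\le (2e+2ec(m)\epsilon^{-1})^{m}$. So the product constraint is not a cosmetic device for extracting the $(m!)^{-s}$ discount; it is what replaces the box count by a simplex count and removes the spurious $s$--dependence.

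Finally, the factor $(m!)^{-s}$ in the stated bound does not come from the ordering $d_1\le\cdots\le d_m$ nor from ``one constraint per place''. In the paper it is obtained purely by Stirling's inequality, rewriting $(16e^{2}m^{3}2^{m}\epsilon^{-1})^{ms+m}\le (m!)^{-s}(64em^{4}2^{m}\epsilon^{-1})^{ms+m}$; it is placed there only so that, in the next lemma, it cancels against the $(m!)^{r_1+r_2}$ choices of permutations $k_v$ coming from Lemma~\ref{lem4}.
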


\begin{proof}
Put $u_{iv} :=  \vert l_{iv}(X_n) \vert_v  \left( R(n)^{\frac{-12}{10\epsilon}}\right)^{s(v)}$ for $v \in S, ~~i=1, \cdots, m.$  Since  \\
$\vert X_n \vert_v \leq (\vert \alpha_1 \vert  )^{11ns(v)/10}$, it follows that
$$u_{iv}  \leq \vert X_n \vert_v \left(m R(n)^{\frac{-12}{10\epsilon}}\right)^{s(v)} \leq   \left(m R(n)^{\frac{-1}{10\epsilon}}\right)^{s(v)} \leq 1,$$ where for the second inequality, we used the fact that $X_n$ solution of inequality \eqref{eq:2} and for the last inequality,  $n>\tau(\epsilon)$. However
$$
\begin{array}{lcl}
\prod_{v \in S}^{} \prod_{i=1}^{m} u_{iv}  &=&   \left( \prod_{v \in S}^{} \prod_{i=1}^{m}  \vert  l_{iv}(X_n) \vert_v \right)    R(n)^{\frac{-12m}{10\epsilon}} \\\\
   
                                                              &\geq&  \left( R(n) \right)^{\frac{-2m}{\epsilon}}.\\         
 
 \end{array}
 $$
  By applying Lemma 9 in \cite{E} with $\theta = 0.5\epsilon c(m)^{-1}$, there is a set  $\Gamma_1$ of non-negative reals $\beta = (\beta_{iv}, ~~v \in S, ~~ i=1, \cdots, m)$ of cardinality $$\vert \Gamma_1 \vert \leq
  (2e +2ec(m)\epsilon^{-1})^{ms}$$  such that for some $\beta \in \Gamma_1,$ we have 
  \begin{equation} \label{eq:17}
R(n)^{c_{iv} - \{1/s c(m)\}} < \vert l_{iv}(X_n) \vert_v  \leq  R(n)^{c_{iv}}, 
 \end{equation}
with $$c_{iv} = \dfrac{12}{10 \epsilon} s(v) - \dfrac{2m}{\epsilon} \beta_{iv},  \quad \mbox{for} \quad  v \in S, ~~ i=1, \cdots, m. $$  Therefore $c_{iv} \leq  \dfrac{12}{10\epsilon} s(v)$. Finally $c= (c_{iv},~~v \in S,~~ i=1, \cdots, m)$ depend only of 
$\gamma \in \Gamma_1,$ hence for $c$ we have at most  $\vert \Gamma_1 \vert$ possibilities. \\

 We define the numbers  $v_{j}(n) :=   R(n)^{\frac{-12}{10\epsilon}} \lambda_j(n)^{-1}$ for $v \in S, ~~i=1, \cdots, m.$ By  Lemma \ref{lem2'}, we have $\lambda_1(n) \geq m^{-2}(m-1)!^{-1} R(n)^{\frac{-11}{10\epsilon}}$. Together with $n> \tau(\epsilon)$
   we obtain $$1 \geq v_1(n) \geq v_2(n) \geq \cdots \geq v_m(n).$$ Then, it follows from  Lemma \ref{lem3} that
 $$v_1(n)\cdots v_m(n) = \dfrac{R(n)^{\frac{-12m}{10\epsilon}}}{\lambda_1(n)\cdots \lambda_m(n)} \geq R(n)^{\frac{-2m}{\epsilon}}.$$ By applying  Lemma 9 in \cite{E} with $\theta = 0.5\epsilon c(m)^{-1}$,  there is a set  $\Gamma_2$ of $m$-tuples of  reals non-negative
  of $\delta = (\delta_{i},  ~~ i=1, \cdots, m)$  independent of $X_n$ of cardinality $$\vert \Gamma_2 \vert \leq (2e +2ec(m)\epsilon^{-1})^{m}$$  such that for some $\delta \in \Gamma_2,$ 
  we have 
  \begin{equation} \label{eq:18}
R(n)^{\frac{-2m}{\epsilon} \left(\delta_{j} + \frac{\epsilon}{2m c(m)}\right)} < v_j(n)  \leq  R(n)^{-2m \delta_j / \epsilon}, 
 \end{equation}
the relation \eqref{eq:18} remains valid after replacing $\delta_1, \cdots, \delta_m$ by $$\delta_1' = \min\{\delta_1, \cdots, \delta_m\}, \quad \delta_2' = \min\{\delta_2, \cdots, \delta_m\}, \cdots, \delta_m' =  \delta_m$$ respectively. Putting 
$$d_j = \dfrac{-12}{10\epsilon} +  \dfrac{2m \delta_j'}{\epsilon}, \quad \mbox{for} \quad j=1, \cdots, m.$$ Finally, we get  $$R(n)^{d_{i} }  \leq  \lambda_i(n)   \leq  R(n)^{d_{i}+\{1/c(m)\}} $$  therefore $d=(d_1, \cdots, d_m)$
depend only  of $\Gamma_2$. From the definition $d_i$ we have the desired inequality.  Since $X_n \in \psi^{-1} (\Pi(n))$, it follows by definition of $1^{st}$ successive minima, we have $\lambda_1(n) \leq 1$ therefore $d_1 \leq 0$. For such a tuple $d$, we have at most  $\vert \Gamma_2 \vert$ possibilities.
It follows that the numbers of possibilities for $(c;d)$ is at most 
 $$
\begin{array}{lcl}
\vert \Gamma_1 \vert   \vert \Gamma_2\vert &\leq &  (2e +2ec(m)\epsilon^{-1})^{ms+m}\\\\
   
                                                                     &\leq&  (16e^2m^32^m\epsilon^{-1})^{ms+m}\\\\
                                                                     
                                                                      &\leq& (m!)^{-s} (64em^42^m\epsilon^{-1})^{ms+m},\\\\
\end{array}
 $$
 where, we used Stirling's approximation to get the last inequality.
 \end{proof}
Let $X_n$ be a solution of inequality \eqref{eq:2}  with $n> \tau(\epsilon)$ and $(c;d)$ the corresponding tuple from Lemma \ref{lem5}. For any finite place $v \in S$ choose $k_v$ to be identity map from $\{1, \cdots, m\}$ to $\{1, \cdots, m\}$. We define $$l_{iv}'(X) =  l_{k_v(i),v}(X)  \quad \mbox{for} \quad v \in S,~~~ i=1, \cdots, m$$ and the numbers  $$e_{iv} =  c_{k_v(i),v}  \quad \mbox{for} \quad v \in S,~~~ i=1, \cdots, m.$$ We have constructed a tuple 
$$\mathcal{T} = (l_{iv}'~:~~ v \in S, ~~ i=1, \cdots, m; ~~~ e_{iv}~:~~ v\in S, ~~ i=1, \cdots, m; ~~~ d_i~:~~   i=1, \cdots, m).$$  By Lemma  \ref{lem5}, we have 
\begin{equation} \label{eq:19}
R(n)^{e_{iv} - \{1/s c(m)\}} < \vert l_{iv}'(X_n) \vert_v  \leq  R(n)^{e_{iv} } 
 \end{equation}
 and 
 \begin{equation}\label{eq:20}
R(n)^{d_{i} }  \leq  \lambda_i(n)   \leq  R(n)^{d_{i}+\{1/c(m)\}}
 \end{equation}
 for $v \in S, ~~ i=1, \cdots, m.$

  \begin{lem} \label{lem6'}
$\mathcal{T}$ belongs to a set independent of $X_n$ of cardinality at most  $$C' = (64em^42^m \epsilon^{-1})^{ms+m} $$
we also have 
\begin{itemize}
\item $e_{iv} \leq \dfrac{12s(v)}{10\epsilon}$   for $v \in S, ~~i=1, \cdots, m.$\\
\item  ${\displaystyle \sum_{v \in S}^{}  \sum_{i=1}^{m}  e_{iv}}  \leq  -1 + (4m^22^m)^{-1} $\\
\item  $1 - (3m^22^m)^{-1} \leq {\displaystyle \sum_{i=1}^{m}  d_i}  \leq  1 + (3m^22^m)^{-1}$
\end{itemize}
\end{lem}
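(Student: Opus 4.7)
\noindent\textbf{Proof proposal for Lemma \ref{lem6'}.}
The plan is to read the three bounds in Lemma \ref{lem6'} directly off Lemma \ref{lem5} combined with (a) the product formula for the $|l_{iv}(X_n)|_v$, (b) the two-sided estimate for $\lambda_1(n)\cdots\lambda_m(n)$ from Lemma \ref{lem3}(5), and (c) the lower bound $R(n)\ge |\alpha_1|^{n\epsilon}$ implied by \eqref{eq:2}. Throughout I will use that $n>\tau(\epsilon)$ makes $\log R(n)$ arbitrarily large compared to the fixed constants $\log(m!)$ and $\log|\Delta_K|$.

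\medskip

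\noindent\emph{Step 1 (counting $\mathcal{T}$).} The tuple $(c;d)$ from Lemma \ref{lem5} takes at most $(m!)^{-s}(64em^42^m\epsilon^{-1})^{ms+m}$ values. The data $\mathcal{T}$ is obtained from $(c;d)$ by choosing, for each $v\in S$, a permutation $k_v$ of $\{1,\dots,m\}$ (the identity at the finite places, so only the infinite places of $S$ contribute nontrivially), which contributes at most $(m!)^{s}$ possibilities. Since $(l_{iv}';e_{iv};d_i)$ is determined by $(c;d)$ together with the permutations, $\mathcal{T}$ lies in a set of cardinality at most $(m!)^{-s}(64em^42^m\epsilon^{-1})^{ms+m}\cdot(m!)^{s}=C'$.

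\medskip

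\noindent\emph{Step 2 (pointwise bound on $e_{iv}$).} Since $e_{iv}=c_{k_v(i),v}$ and Lemma \ref{lem5} gives $c_{jv}\le 12s(v)/(10\epsilon)$ for every $j$, the bound transports to $e_{iv}$ unchanged.

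\medskip

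\noindent\emph{Step 3 (sum bound on $e_{iv}$).} Because $k_v$ is a permutation, $\sum_i e_{iv}=\sum_i c_{iv}$. Multiplying the left inequality of Lemma \ref{lem5} over all pairs $(v,i)$ with $v\in S$ and invoking $\prod_{v\in S}\prod_{i=1}^m|l_{iv}(X_n)|_v=R(n)^{-1}$ yields
\[
R(n)^{\,\sum_{v,i}c_{iv}-m/c(m)}<R(n)^{-1}.
\]
Since \eqref{eq:2} forces $R(n)>|\alpha_1|^{n\epsilon}>1$, I can take logarithms to obtain $\sum_{v,i}c_{iv}<-1+m/c(m)=-1+(4m^22^m)^{-1}$, which is exactly the desired inequality after relabeling.

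\medskip

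\noindent\emph{Step 4 (sum bounds on $d_i$).} Multiplying the two-sided inequality for $\lambda_i(n)$ in Lemma \ref{lem5} gives
\[
R(n)^{\,\sum_i d_i}\le \lambda_1(n)\cdots\lambda_m(n)\le R(n)^{\,\sum_i d_i+m/c(m)},
\]
while Lemma \ref{lem3}(5) provides $R(n)/m!\le \lambda_1(n)\cdots\lambda_m(n)\le |\Delta_K|^{m/2d}R(n)$. Combining these two chains and taking logarithms,
\[
\sum_i d_i\ge 1-\frac{m}{c(m)}-\frac{\log(m!)}{\log R(n)},\qquad \sum_i d_i\le 1+\frac{(m/2d)\log|\Delta_K|}{\log R(n)}.
\]
The assumption $n>\tau(\epsilon)$ ensures via the third term in the definition of $\tau$ that $\log R(n)\ge n\epsilon\log|\alpha_1|$ is large enough that each of the two error terms is bounded by $(12m^22^m)^{-1}$; adding this to $m/c(m)=(4m^22^m)^{-1}$ produces the symmetric bound $1\pm(3m^22^m)^{-1}$ claimed.

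\medskip

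\noindent\emph{Main obstacle.} The routine calculations are the sum and product manipulations; the one point that needs care is the bookkeeping in Step 4, where the numerical constants $12m^22^m,\ 4m^22^m,\ 3m^22^m$ must be reconciled. The definition of $\tau(\epsilon)$ contains the term $(11mA+e^{g(\epsilon)}\log(l!)\log(2d|\Delta_K|))/(x\log|\alpha_1|)$ precisely so that the residuals $\log(m!)/\log R(n)$ and $(m/2d)\log|\Delta_K|/\log R(n)$ can be absorbed into the gap between $(4m^22^m)^{-1}$ and $(3m^22^m)^{-1}$; verifying this matching of constants is the only nontrivial point.
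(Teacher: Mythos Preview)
Your proposal is correct and follows essentially the same route as the paper's own proof: the cardinality bound comes from multiplying the count of $(c;d)$-tuples in Lemma \ref{lem5} by the number of permutations $k_v$ (the paper uses the slightly sharper $(m!)^{r_1+r_2}$ in place of your $(m!)^s$, but both give $\le C'$), the bound on $\sum_{v,i}e_{iv}$ is obtained exactly as you do by taking the product of the left inequality in Lemma \ref{lem5} and comparing with $R(n)^{-1}$, and the two-sided bound on $\sum_i d_i$ is obtained by sandwiching $\prod_i\lambda_i(n)$ between Lemma \ref{lem5} and Lemma \ref{lem3}(5) and absorbing $\log(m!)$ and $\log|\Delta_K|$ via $n>\tau(\epsilon)$. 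Your identification of the constant-matching in Step 4 as the only delicate point is accurate and matches what the paper does.
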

\begin{proof}
For infinite places $v \in S$, the linear form $l_{iv}'$ are uniquely determined by the permutation of $\{1, \cdots, m\}$ from Lemma \ref{lem4}. Therefore $\mathcal{T}$ is uniquely determined by $k_v ~(v \mid \infty)$ and $(c;d)$. It
follows that for $\mathcal{T}$ we have at most $$(m!)^{r_1+r_2}(m!)^{-s}(64em^42^m\epsilon^{-1})^{ms+m} \leq C'$$ possibilities. The second point follows from the definition of  $\mathcal{T}$. Namely, from Lemma \ref{lem5},  we have $$e_{iv} \leq \dfrac{12s(v)}{10\epsilon} ~~  \mbox{for} ~~ v \in S, ~~i=1, \cdots, m.$$ Moreover,
$$R_v(n)^{-1} =  \prod_{i=1}^{m}\vert l_{iv}(X_n) \vert_v  \geq R(n)^{{\displaystyle \sum_{i=1}^{m} e_{iv} }-\frac{m}{c(m)s}}$$ for $v \in S.$  Hence $$R(n)^{-1} = \prod_{v \in S}^{}R_v(n)^{-1} \geq R(n)^{{\displaystyle \sum_{i,v}^{} e_{iv}} -\frac{m}{c(m)}}$$ which gives us the desired result.   By \eqref{eq:20} and Lemma \ref{lem3}, we have $$R(n)^{d_1+\cdots + d_m} \leq \lambda_1(n) \cdots \lambda_m(n) < \Delta_K^{m/2d} R(n) \leq R(n)^{1 + \frac{1}{3m^22^m}}$$ where for the last inequality we utilized the fact $n> \tau(\epsilon)$. Moreover
$$R(n)^{d_1+\cdots + d_m} \geq \lambda_1(n) \cdots \lambda_m(n) R(n)^{- \frac{1}{4m^22^m}} >    R(n)^{1 - \frac{1}{3m^22^m}}$$ where for the last inequality we have also used that $n> \tau(\epsilon)$. Combining the two relations above complete the proof of the Lemma.
 \end{proof}
Now we  consider the vectors $v_1, \cdots, v_m$ from Lemma \ref{lem5}. We recall that $l_{iv}' = l_{k_v(i),v}$ for infinite places $v$ and  $l_{iv}' = l_{iv}$ if $v$ is a finite place in $S$. Lemma \ref{lem4}, relations \eqref{eq:19} and  \eqref{eq:20} thus yield that
\begin{equation}\label{eq:g}
\vert l_{iv}'(v_j) \vert_v  \leq   \left( (2d\vert \Delta_K\vert^{1/2})^{2m} \right)^{s(v)} R(n)^{e_{iv} + s(v) ( d_{\min(i,j)} +1/c(m)} ).
\end{equation}

Let $t$ be the largest integer such that $\lambda_t \leq1$ and $V$, the $K$-vector space generated by $\psi^{-1}(\lambda_t \Pi(n))$. We have $X_n \in V$ since $\lambda_{t+1} > 1$ and moreover  $\{v_1, \cdots, v_t\}$ is a basis of $V$. We also have $R(n)^{d_t} \leq \lambda_t \leq 1$ which implies $d_t \leq 0.$ Hence, it follows that $X_n$ lies in the $K$-vector space generated by $v_1, \cdots, v_r,$ where $r$ is the largest integer such that $d_r \leq 0.$ \\

    Below we construct a tuple $(N, \overline{\gamma}, \mathcal{L})$ depending only on $\mathcal{T}$. Since the number of possibilities of  $\mathcal{T}$ is at most $C'$, it follows that the number of such tuples is at most 
$$ (64em^42^m\epsilon^{-1})^{ms+m}.$$ Let $(l_{iv}'; ~e_{iv} ; ~d_i) \in  \mathcal{T}$. There is an integer $k$ with
\begin{equation} \label{eq:e}
1\leq k \leq m-1, \quad d_{k+1} >0, \quad d_{k+1} -d_k \geq \frac{1}{m^2}.
\end{equation}
Namely, let $r$ be the largest integer such that $d_r \leq 0$. By taking $k$ the smallest integer of the set $\{d_r, ~d_{r+1}, ~\cdots, ~d_m\}$ such that $d_{k+1} -d_k$ is maximal,  we obtain the relation \eqref{eq:e} as follows: By Lemma \ref{lem5}, it is immediate that
\begin{equation} \label{eq:f}
d_m \geq \frac{1}{m}- \dfrac{1}{3m^32^m}.
\end{equation}
Therefore
$$
d_{k+1}-d_k \geq  \frac{1}{m-r}(d_m-d_r) \geq  \frac{1}{m-r} \left( \frac{1}{m}- \dfrac{1}{3m^32^m} \right) \geq \frac{1}{m^2}
 $$
 where for the second inequality, we used \eqref{eq:f}.
Put $N= \binom{m}{k}$. As before, let $\sigma_1, \cdots, \sigma_N$ be the sequence of subset of $\{1, \cdots, m\}$ of cardinality $m-k$, ordered lexicographically. We define the set of linear forms 
$\mathcal{L} =\{ \overline{l}_{iv} ~:~~ v \in S,~~ i=1, \cdots, N \}$ with $$ \overline{l}_{iv} ~:= \alpha_{iv} l_{i_1v}' \wedge \cdots \wedge  l_{i_{m-k}v}' \quad \mbox{for} \quad v \in S, ~~ i=1, \cdots,N $$ such that $\vert \overline{l}_{iv} \vert_v =1.$ We also define 
$$ \overline{e}_{iv} = e_{i_1v} + \cdots + e_{i_{m-k}v}, \quad  \overline{d}_{i} = d_{i_1} + \cdots + d_{i_{m-k}}$$ and the tuple $\overline{\gamma} = (\gamma_{iv}, ~~v \in S, ~~~ i=1, \cdots, N)$ with 
$$\gamma_{iv} = \dfrac{\epsilon}{3m} \biggl\{ \dfrac{1}{c(m) s} +  \overline{e}_{iv} + s(v) \left( \overline{d}_{i} + \dfrac{m}{c(m)} \right)  \biggl\} \quad \mbox{for} \quad v \mid \infty, ~~ i=1, \cdots,N-1 $$ and 
$$\gamma_{Nv} = \dfrac{\epsilon}{3m} \biggl\{ \dfrac{1}{c(m) s} +  \overline{e}_{Nv} + s(v) \left( \overline{d}_{N-1} + \dfrac{m}{c(m)} \right) \biggl\} \quad \mbox{for} \quad v \mid \infty $$ and 
$$\gamma_{iv} = \dfrac{\epsilon}{3m} \biggl\{ \min \left( 0, \dfrac{1}{c(m) s} +  \overline{e}_{iv} \right) \biggl\} \quad \mbox{for} \quad v \nmid \infty, ~~ i=1, \cdots,N $$ where $c(m) = 4m^32^m.$\\

\begin{lem} \label{lem6}
We have 
 $\gamma_{iv} \leq s(v)$   for $v \in S, ~~i=1, \cdots, N$ and 
 $$\sum_{v \in S}^{}  \sum_{i=1}^{m}  \gamma_{iv}  ~\leq ~ -\frac{\epsilon}{9m^3}.$$

\end{lem}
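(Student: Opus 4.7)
The plan is to verify both inequalities directly by plugging the explicit bounds from Lemma~\ref{lem6'} into the definition of $\gamma_{iv}$, and by exploiting the specific choice of the integer $k$ from~\eqref{eq:e} which guarantees the gap $d_{k+1}-d_k \geq 1/m^2$.

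For the inequality $\gamma_{iv}\leq s(v)$, the case of a finite $v$ is immediate, since then the minimum with $0$ forces $\gamma_{iv}\leq 0$. At an archimedean place I would bound the three summands in the bracket separately. The term $\overline{e}_{iv}$ is a sum of at most $m$ coefficients each $\leq \tfrac{12 s(v)}{10\epsilon}$, so $\tfrac{\epsilon}{3m}\overline{e}_{iv}\leq \tfrac{2s(v)}{5}$. The term $\overline{d}_i$ is at most $d_{k+1}+\cdots+d_m \leq \sum_j d_j + k\cdot\tfrac{12}{10\epsilon} = O(m/\epsilon)$, giving a further contribution $\leq \tfrac{2s(v)}{5}+o(s(v))$ after multiplication by $\tfrac{\epsilon s(v)}{3m}$. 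The leftover additive pieces $\tfrac{1}{c(m)s}$ and $\tfrac{m s(v)}{c(m)}$ are negligible because $c(m)=4m^3 2^m$ is very large. The total stays safely below $s(v)$.

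For the second inequality, the key is symmetry. Since each index $j\in\{1,\dots,m\}$ lies in exactly $\binom{m-1}{m-k-1}$ of the subsets $\sigma_i$ of size $m-k$, one has
\[
\sum_{i=1}^N \overline{e}_{iv} \;=\; \binom{m-1}{m-k-1}\sum_{j=1}^m e_{jv}, \qquad
\sum_{i=1}^N \overline{d}_{i} \;=\; \binom{m-1}{m-k-1}\sum_{j=1}^m d_j.
\]
Combining these with $\sum_{v\mid\infty}s(v)=1$ and the trivial $\min(0,x)\leq x$ for the finite places, $\sum_{v\in S}\sum_{i=1}^N \gamma_{iv}$ is bounded above by
\[
\tfrac{\epsilon}{3m}\left[\tbinom{m-1}{m-k-1}\!\left(\textstyle\sum_{v,j}e_{jv}+\sum_j d_j\right) \;-\; (d_{k+1}-d_k) \;+\; \tfrac{N(m+1)}{c(m)}\right],
\]
the term $-(d_{k+1}-d_k)$ being the saving produced by replacing $\overline{d}_N$ by $\overline{d}_{N-1}$ in the $i=N$ row of the archimedean definition (indeed the lex order yields $\overline{d}_N-\overline{d}_{N-1}=d_{k+1}-d_k$). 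Inserting the bounds $\sum_{v,j}e_{jv}+\sum_j d_j\leq \tfrac{7}{12 m^2 2^m}$ from Lemma~\ref{lem6'} and $d_{k+1}-d_k\geq 1/m^2$ from~\eqref{eq:e}, and using $\binom{m-1}{m-k-1}\leq 2^{m-1}$ so that the binomial cancels the $2^m$ in the denominator, the bracket becomes $\leq \tfrac{7}{24 m^2}-\tfrac{1}{m^2}+\tfrac{m+1}{4m^3}$, which is negative of size $\Omega(1/m^2)$ and yields the required $-\tfrac{\epsilon}{9 m^3}$.

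The main obstacle is the bookkeeping. The single genuine negative contribution $-(d_{k+1}-d_k)$ has size only $1/m^2$, of the same order as the positive errors $\binom{m-1}{m-k-1}\cdot O(1/(m^2 2^m))$ obtained by multiplying the slacks in Lemma~\ref{lem6'} by the binomial, and of the same order as $N(m+1)/c(m)$. The constants $4m^2 2^m$, $3m^2 2^m$ in Lemma~\ref{lem6'} and $c(m)=4m^3 2^m$ in the definition of $\gamma_{iv}$ are engineered precisely so that all these errors are strictly smaller than $1/m^2$, producing a negative bracket with the right scale $\epsilon/m^3$ after multiplication by $\epsilon/(3m)$.
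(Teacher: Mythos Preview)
Your proposal is correct and follows essentially the same approach as the paper's proof: for $\gamma_{iv}\le s(v)$ you treat finite and infinite places separately and bound $\overline{e}_{iv}$, $\overline{d}_i$ via Lemma~\ref{lem6'} exactly as the paper does; for the sum you use the same symmetry identity $\sum_i\overline{e}_{iv}=\binom{m-1}{m-k-1}\sum_j e_{jv}$ (likewise for $\overline{d}_i$), the key observation $\overline{d}_N-\overline{d}_{N-1}=d_{k+1}-d_k\ge 1/m^2$, and the same cancellation $\sum_{v,j}e_{jv}+\sum_j d_j\le \tfrac{7}{12m^2 2^m}$ from Lemma~\ref{lem6'}. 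The final numerical bound $\tfrac{7}{24m^2}+\tfrac{m+1}{4m^3}-\tfrac{1}{m^2}\le -\tfrac{1}{3m^2}$ is exactly the paper's computation.
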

 
 \begin{proof}
 For any finite place $v \in S$, we have $\gamma_{iv} \leq 0 \leq s(v).$ Let $v$ be  an infinite place over $K$. By Lemma \ref{lem6'}, it follows $$ \overline{e}_{iv} = e_{i_1v} + \cdots + e_{i_{m-k}v} \leq (m-k) \dfrac{12s(v)}{10\epsilon} \leq \dfrac{12ms(v)}{10\epsilon}$$ and  $$ \overline{d}_{i} = d_{i_1} + \cdots + d_{i_{m-k}} \leq d_{1} + \cdots + d_{m} -kd_1 \leq 1+ \dfrac{1}{3m^22^m} + \dfrac{12m}{10\epsilon}.$$ By inserting these relations in the expression of $\gamma_{iv}$, we get 
  $$
\begin{array}{lcl}
\gamma_{iv}  &\leq & \dfrac{\epsilon}{3m} \biggl\{ \dfrac{2+m}{c(m) } +  \dfrac{24m}{10\epsilon}  +  1+ \dfrac{1}{3m^22^m} \biggl\}s(v) \\\\
   
                                                                     &\leq&  \dfrac{\epsilon}{3} \biggl\{ 3\epsilon + 2.5  \biggl\} s(v)\\\\
                                                                     
                                                                      &\leq& s(v),\\\\
\end{array}
 $$
where we used the fact that $\epsilon < 1/ 12.$ We know that $\overline{d}_N- \overline{d}_{N-1} = d_{k+1} - d_k,$ so we obtain 
$$
\begin{array}{lcl}
{\displaystyle \sum_{v \in S}^{} \sum_{i=1}^{N} \gamma_{iv}}  &\leq & \dfrac{\epsilon}{3m} \biggl\{  {\displaystyle \sum_{v \in S}^{} \sum_{i=1}^{N} \left( \dfrac{1}{c(m)s } + \overline{e}_{iv} + s(v)\left( \overline{d}_{i} + \dfrac{m}{c(m)} \right) \right)} -(\overline{d}_N- \overline{d}_{N-1} ) \biggl\}\\\\
   
                                                                     &\leq&  \dfrac{\epsilon}{3m} \biggl\{   \dfrac{N}{c(m)} +  {\displaystyle \sum_{v \in S}^{} \sum_{i=1}^{N} \overline{e}_{iv} +   \sum_{i=1}^{N}  \overline{d}_{i} }+ \dfrac{Nm}{c(m)} -(d_{k+1}- d_{k} ) \biggl\}\\\\
                                                                     
                                                                      &\leq&\dfrac{\epsilon}{3m} \biggl\{   \dfrac{(m+1)2^m}{4m^32^m} + \dfrac{7\cdot 2^{m-1}}{12m^32^m} - \dfrac{1}{m^2}  \biggl\}\\\\
                                                                      
                                                                       &\leq& -\dfrac{ \epsilon}{9m^3},
\end{array}
 $$
 where, for the third inequality we have applied Lemma \ref{lem6'} and the equality 
 $$ \sum_{v \in S}^{} \sum_{i=1}^{N} \overline{e}_{iv}=  \binom{m-1}{m-k-1} \sum_{v \in S}^{} \sum_{i=1}^{N}e_{iv} \quad \mbox{and} \quad \sum_{i=1}^{N}  \overline{d}_{i} = \binom{m-1}{m-k-1} \sum_{i=1}^{N} d_i.$$ Hence the proof is complete.
 \end{proof}

Put $Q = \left( R(n)^{1/ \epsilon} \right)^{3m}$. We have proved $(N, ~\overline{\gamma}, ~ \mathcal{L}, Q)$ satisfies the relation \eqref{eq:14'} and  \eqref{eq:15"} of the Theorem \ref{thm2}. Hence we complete the proof of that result by showing there is a vector space $W$ for which \eqref{eq:13} holds. 
\begin{lem} \label{lem7}
Let $v_1, \cdots, v_m$ be linearly independent vectors from Lemma \ref{lema} and $W$  the $K$-vector space generated by  $\{v_1, \cdots, v_k\}$. Then 
$${\rm dim}_K W = k, \quad X_n \in W, \quad f_k(W) = V(N, ~\overline{\gamma}, ~ \mathcal{L}, Q).$$
\end{lem}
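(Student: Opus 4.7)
The statement bundles three claims of increasing difficulty. The first, $\dim_K W = k$, is immediate from the linear independence of $v_1,\ldots,v_m$ furnished by Lemma \ref{lema}. The second, $X_n \in W$, was essentially established in the paragraph preceding this lemma: $X_n$ lies in the span of $v_1,\ldots,v_r$ where $r$ is the largest index with $d_r\le 0$, and since $k$ satisfies $d_{k+1}>0$ by \eqref{eq:e}, we have $r\le k$, so $X_n \in W$.

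All the content is in the third claim $f_k(W)=V(N,\overline\gamma,\mathcal L,Q)$, and my plan is to prove the two inclusions separately. For the forward direction $f_k(W)\subseteq V(N,\overline\gamma,\mathcal L,Q)$, Lemma \ref{lem} together with the characterization of $f_k$ stated right after it yields that $f_k(W)$ is spanned by the $N-1$ vectors $V_i:=v_{i_1}\wedge\cdots\wedge v_{i_{m-k}}$, as the index set $\{i_1<\cdots<i_{m-k}\}\subseteq\{1,\ldots,m\}$ ranges over all size-$(m-k)$ subsets except $\{k+1,\ldots,m\}$. I would verify that each such $V_i$ lies in $\Pi(N,\overline\gamma,\mathcal L,Q)$. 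Integrality $V_i\in\mathcal O_S^N$ is free since $v_j\in\mathcal O_K^m$ by Lemma \ref{lema}. For the inequality $|\overline l_{jv}(V_i)|_v\le Q^{\gamma_{jv}}$, the Laplace identity
\[
\overline l_{jv}(V_i) \;=\; \alpha_{jv}\,\det\bigl(l'_{j_s v}(v_{i_t})\bigr)_{s,t=1,\ldots,m-k}
\]
combined with \eqref{eq:g}, the monotonicity inequality $\sum_{s}d_{\min(i_{\pi(s)},j_s)}\le\min(\overline d_i,\overline d_j)$ valid for every permutation $\pi$ (because $d_1\le\cdots\le d_m$), the normalisation $|\overline l_{jv}|_v=1$, and the choice $Q=R(n)^{3m/\epsilon}$ produces exactly the exponent of $R(n)$ built into $\gamma_{jv}$, with the $1/c(m)$ slack absorbed thanks to $n>\tau(\epsilon)$.

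The reverse inclusion $V(N,\overline\gamma,\mathcal L,Q)\subseteq f_k(W)$ is where the real work lies. Since $\{V_1,\ldots,V_N\}$ is a basis of $K^N$ with $V_N=v_{k+1}\wedge\cdots\wedge v_m$, it suffices to show that any $y=\sum_{i=1}^N c_i V_i\in\Pi(N,\overline\gamma,\mathcal L,Q)$ must have $c_N=0$. The essential mechanism is the jump $d_{k+1}-d_k\ge 1/m^2$ guaranteed by \eqref{eq:e}: applied to $V_N$, the same Laplace computation yields $|\overline l_{Nv}(V_N)|_v$ strictly larger than the corresponding quantities for $V_i$ with $i<N$ by a factor of order $R(n)^{s(v)/m^2}$ at each archimedean $v$. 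Solving for $c_N$ by Cramer's rule, bounding $|c_N|_v$ at every place of $S$ with the above estimate, and then invoking integrality together with the product formula should force $c_N=0$. The main technical hurdle is the careful bookkeeping required to show that the gap $1/m^2$ genuinely dominates all the accumulated slack from the $1/c(m)$ corrections, the factors $(2d|\Delta_K|^{1/2})^{2m}$ in \eqref{eq:g}, and the normalisations of the $\alpha_{jv}$; this is precisely where the threshold encoded in $\tau(\epsilon)$ (through the term $e^{g(\epsilon)}\log(l!)\log(2d|\Delta_K|)$) is used in its strongest form.
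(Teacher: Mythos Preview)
Your proposal is essentially correct and follows the same two-inclusion strategy as the paper. The forward direction is handled exactly as the paper does it: show that each wedge $V_j=v_{j_1}\wedge\cdots\wedge v_{j_{m-k}}$ with $\sigma_j\ne\{k+1,\ldots,m\}$ lies in $\Pi(N,\overline\gamma,\mathcal L,Q)$ via the Laplace identity, the estimate \eqref{eq:g}, and the monotonicity of the $d_i$.

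For the reverse inclusion the paper also uses the determinant/product-formula mechanism you describe, but packaged more cleanly. Given any $\widehat v_0\in\Pi$, one writes $\det(\widehat v_0,V_1,\ldots,V_{N-1})=\det(\overline l_{1v},\ldots,\overline l_{Nv})^{-1}\det(\overline l_{iv}(\widehat v_j))_{i,j}$, bounds the second factor by $(N!)^{s(v)}Q^{\gamma_{1v}+\cdots+\gamma_{Nv}}$ via Leibniz (since \emph{all} $N$ columns satisfy $|\overline l_{iv}(\cdot)|_v\le Q^{\gamma_{iv}}$), and then invokes the inequality $\sum_{i,v}\gamma_{iv}\le -\epsilon/(9m^3)$ already established in Lemma~\ref{lem6}. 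Two small corrections to your sketch: first, the gap $d_{k+1}-d_k\ge 1/m^2$ is not used here through a comparison of $|\overline l_{Nv}(V_N)|_v$ with $|\overline l_{Nv}(V_i)|_v$ (those are upper bounds and do not compare in the direction you need); rather, the gap was already absorbed into the negativity of $\sum\gamma_{iv}$ in Lemma~\ref{lem6}, and that is all one uses. Second, integrality and the product formula are applied to the $S$-integer $\det(V_1,\ldots,V_{N-1},y)$ (the Cramer numerator), not to $c_N$ itself, which need not be an $S$-integer.
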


 \begin{proof}
 It is clear that ${\rm dim}_K W = k$. We have proved at page 18 that $X_n \in V$ where $V$ is the vector space generated by $\{v_1, v_2, \cdots, v_r\}$ with $r$ the largest integer such that $d_r \leq 0$. By the construction of the integer $k$ above, it is immediate that $k \geq r$ which implies $X_n \in W$. We now define the vector $$\widehat{v}_i := \widehat{v}_{\sigma_i}= v_{i_1} \wedge \cdots \wedge  v_{i_{m-k}}, \quad \mbox{where} \quad \sigma_i = \{i_1, \cdots, i_{m-k}.\}$$ 
 By definition, $f_k(W)$ has basis  $\{\widehat{v}_1, \cdots, \widehat{v}_{N-1}\}.$  Furthermore $f_k(W)$ is the $K$- vector space generated by  $\Pi:= \Pi (N, \overline{\gamma},  \mathcal{L}, Q)$. Namely, we first show that $\widehat{v}_1, 
 \cdots, \widehat{v}_{N-1} \in \Pi$ and that any vector  $\widehat{v}_0 \in \Pi$ is linearly dependent on $\widehat{v}_1, \cdots, \widehat{v}_{N-1}$. This gives us what we want. Take $v \in S$ and $i,j \in \{1, \cdots, N\}$. Suppose $\sigma_i, ~~\sigma_j$ given as above. By the Laplace rule we have $$ \vert  \overline{l}_{iv}(\widehat{v}_j) \vert_v  =  \vert \alpha_{iv} \vert_v  \vert {\rm det} (l_{pv}'(v_q))_{p\in \sigma_i,~q \in \sigma_j}\vert_v .$$ From the definition of $\overline{l}_{iv}$ and Lemma \ref{lem'} (apply with $H=1$), relation \eqref{eq:11}, we have  $\vert \alpha_{iv} \vert_v = \vert l_{i_1v}' \wedge \cdots \wedge  l_{i_{m-k}v}' \vert_v^{-1} \leq m!$ and by taking the maximum over all permutations $\kappa$ of $\sigma_j$, ones get from \eqref{eq:g} and Leibniz formula of the determinant that
 $$ \vert  \overline{l}_{iv}(\widehat{v}_j) \vert_v  \leq m!(m!)^{s(v)} \max_{\kappa} \prod_{t=1}^{m-k}  (2d \Delta_K^{1/2})^{2ms(v)} R(n)^{ e_{i_tv} + s(v) \left( d_{\min(i_t, \kappa (j_t))} + \dfrac{1}{c(m)} \right)}.$$ This expression thus yields that $$ \vert  \overline{l}_{iv}(\widehat{v}_j) \vert_v  \leq m! \left(m! (2d \Delta_K^{1/2})^{2m(m-k)}\right)^{s(v)} R(n)^{ \overline{e}_{iv} + s(v) \left( d_0 + \dfrac{m}{c(m)} \right)},$$ where 
 $$d_0 = \max_{\kappa} \sum_{i=1}^{m-k} d_{\min(i_t, \kappa(j_t))} \leq \min\{\overline{d}_i, \overline{d}_j\}.$$  Using the fact that $n> \tau(\epsilon)$, we obtain 
 $$\left(m!^2 (2d \Delta_K^{1/2})^{2m(m-1)}\right) \leq \vert \alpha_1 \vert^{\dfrac{n\epsilon}{sc(m)}} \leq  R(n)^{\dfrac{1}{sc(m)}}.$$ Hence we obtain
 \begin{equation}\label{eq:24}
\vert  \overline{l}_{iv}(\widehat{v}_j) \vert_v  \leq  R(n)^{\biggl\{ \dfrac{1}{c(m) s} +  \overline{e}_{iv} + s(v) \left( \overline{d}_{\min(i,j)} + \dfrac{m}{c(m)} \right) \biggl\}} 
\end{equation}
for $v \in S; ~~ (i,j) \in \{1, \cdots, N\} \times \{1, \cdots, N-1\}.$ Together with the fact that  $Q = \left( R(n)^{1/ \epsilon} \right)^{3m}$ and the definition of $\gamma_{iv}$, we deduce that for infinite place $v$ and $i=1, \cdots, N; ~~j=1, \cdots N-1,$ one get  $$ \vert  \overline{l}_{iv}(\widehat{v}_j) \vert_v  \leq Q^{\gamma_{iv}}.$$ We know that $v_1, \cdots, v_m \in \mathcal{O}_K^m$ and so $\widehat{v}_j  \in \mathcal{O}_K^N.$ For every finite place $v$ and $j= 1, \cdots, N-1,$ one deduce from Lemma \ref{lm}, relation \eqref{eq:7} that
$$\vert  \overline{l}_{iv}(\widehat{v}_j) \vert_v  \leq  \vert   \overline{l}_{iv} \vert_v   \vert   \widehat{v}_{j} \vert_v \leq 1.$$ Together with \eqref{eq:24}, we obtain 
$$\vert \overline{l}_{iv}(\widehat{v}_j) \vert_v  \leq  R(n)^{\min \biggl\{ 0, \dfrac{1}{c(m) s} +  \overline{e}_{Nv} \biggl\}} \leq  Q^{\gamma_{iv}}.$$ Therefore $\widehat{v}_1, \cdots, \widehat{v}_{N-1} \in \Pi .$ Take $\widehat{v}_0 \in \Pi.$ We show that $\widehat{v}_0$ is linearly dependent of $\widehat{v}_1, \cdots, \widehat{v}_{N-1}$. Fix $v \in S.$ Then 
\begin{equation}\label{eq:25}
{\rm det} ( \widehat{v}_0, \cdots, \widehat{v}_{N-1})  = {\rm det} ( \overline{l}_{1v}, \cdots, \overline{l}_{Nv})^{-1} {\rm det} (\overline{l}_{iv}(\widehat{v}_j))_{i,j}.
\end{equation}
Put $f_v =  {\rm det} (\overline{l}_{iv}(\widehat{v}_j))_{i,j}.$ By the relations \eqref{eq:9}, \eqref{eq:10} and \eqref{eq:12}, we have  
\begin{equation}\label{eq:26}
\vert {\rm det} ( \overline{l}_{1v}, \cdots, \overline{l}_{Nv})^{-1} \vert_v  \leq N!(m-1)!^N(m-1)!^{Ns(v)}.
\end{equation}
Since $v_0, \cdots, v_{N-1} \in \Pi$, it follows that
\begin{equation}\label{eq:27}
\begin{array}{lcl}
\vert f_v \vert_v  &\leq & (N!)^{s(v)}  \max_{\kappa} \vert \overline{l}_{1v}(\widehat{v}_{\kappa(0)})  \cdots \overline{l}_{Nv}(\widehat{v}_{\kappa(N-1)})  \vert_v\\\\
   
                                                                     &\leq& ( N!)^{s(v)} \cdot Q^{\gamma_{1v} + \cdots + \gamma_{Nv}}    \\\\
                                                                                                                                                                                       
\end{array}
\end{equation}
By combining  \eqref{eq:25},  \eqref{eq:26} and \eqref{eq:27} , we obtain $$ \vert {\rm det} ( \widehat{v}_0, \cdots, \widehat{v}_{N-1}) \vert_v \leq   N!(m-1)!^{N(1+s(v))} ( N!)^{s(v)} \cdot Q^{\gamma_{1v} + \cdots + \gamma_{Nv}}.$$ 
By taking the product over $v \in S$ and using Lemma \ref{lem1}, we get
$$
\begin{array}{lcl}
\prod_{v \in S}^{} \vert {\rm det} ( \widehat{v}_0, \cdots, \widehat{v}_{N-1}) \vert_v  &\leq & (N! (m!))^{N})^{(s+1)} \cdot Q^{{\displaystyle \sum_{i,v}^{} \gamma_{iv}}} \\\\
   
                                                                     &\leq& ((2^m)! (m!))^{2^m})^{(s+1)} \cdot Q^{-\dfrac{\epsilon}{9m^3}} \\\\
                                                                     
                                                                     &<& 1,
\end{array}
$$
where for the last inequality we used the relation \eqref{eq:15"} and the fact that $n> \tau(\epsilon)$. Since $ \widehat{v}_j \in \mathcal{O}_S^N$ for $j=0, \cdots, N-1,$ whence $ {\rm det} ( \widehat{v}_0, \cdots, \widehat{v}_{N-1}) $ is $S$-integer. By the product 
formula, we deduce that $$ {\rm det} ( \widehat{v}_0, \cdots, \widehat{v}_{N-1})  = 0,$$ hence the proof  is completed.
 \end{proof}

\section{Proof of Theorem \ref{thm3}}\label{sec4}
Let $l, N$ be the integers $\geq 2$. For $h=1, \cdots, l$ denote by $x_h$ the block of $N$ variables $(x_{h1}, \cdots, x_{hN})$ and $\overline{\mathbb{Q}}[x_1, \cdots, x_l]$,  the ring of polynomial in the $lN$ variables $x_{11}, \cdots, x_{lN}$ with coefficients from $\overline{\mathbb{Q}}$. We use \textbf{i} to denote a tuple of non-negative integers $i_{hj} ~:~ h=1, \cdots, l; ~~ j=1, \cdots, N.$ For such a tuple  \textbf{i} 
we define the partial derivative of $F \in \overline{\mathbb{Q}}[x_1, \cdots, x_l]$ by  $$F_{\textbf{i}} = \prod_{h=1}^{l} \prod_{j=1}^{N} \left( \frac{1}{i_{hj}!} \frac{ \partial^ {i_{hj}} }{\partial  x_{hj}^ {i_{hj}} }F \right). $$ Let 
$d:= (d_1, \cdots, d_l)$ be a tuple of positive integers and for a tuple  \textbf{i}  as above, put $$\left( \textbf{i} /d \right) = \sum_{h=1}^{l} \frac{1}{d_h} \biggl\{  i_{h1} + \cdots + i_{hN} \biggl\}.$$

Assume $F \neq 0$. Then the index of $F$ at $x$ which respect to $d$ noted ${\rm Ind}_{x,d}(F)$ is defined as the largest number $\sigma$ such that $F_{\textbf{i}} (x) =0$ for all  $\textbf{i}$ with $\left( \textbf{i} /d \right) \leq \sigma.$ It is well known (see for e.g. Chap. VI,  Lemma 4A in \cite {SB}) that $${\rm Ind}_{x,d}(FG) = {\rm Ind}_{x,d}(F) + {\rm Ind}_{x,d}(G)  \quad \mbox{for} \quad  F, ~G \in  \overline{\mathbb{Q}}[X_1, \cdots, X_l].$$
We say that $F$ is homogeneous of degree $d_h$ in $x_h$ for $h=1, \cdots, l$ if $F$ is linear combination of monomials $$ \prod_{h=1}^{l} \prod_{j=1}^{N} X_{hj}^{i_{hj}} \quad \mbox{with} \quad \sum_{j=1}^{l} i_{hj} = d_h, ~~ h=1, \cdots, l.$$   Let $\Gamma(d)$ be the set of homogeneous polynomial of degree $d_h$ in $x_h$ for $h=1, \cdots, l$  with coefficients in $\mathbb{Z}$ such that their ${\rm \gcd}$ is $1$. We define the height $H(F) := H(a_F)$, where $a_F$ is the vector of coefficients of $F$.
Let $L$ be a number field,  $v$ a place on $L$ and $F \in L[x_1, \cdots, x_l]$. Put  $\vert F \vert_v = \vert a_F \vert_v$ and  $$H(F) = \prod_{v \in M_L}^{} \vert F \vert_v.$$ From the inequality $\binom{a}{b} \leq 2^{a}$ with $a \geq b$ integers, it easy to see that  
\begin{equation}\label{eq:j}
\vert F_{\textbf{i}} \vert_v \leq  2^{(d_1+ \cdots + d_l)s(v)}\vert F\vert_v.
\end{equation}
By taking the product over all places, we obtain 
\begin{equation}\label{eq:k}
H( F_{\textbf{i}})  \leq  2^{(d_1+ \cdots + d_l)}H(F).
\end{equation}
The height of an $(N-1)$-dimensional linear subspace  of  $L^N$ defined $$V = \{x \in L^N~:~~ a_1x_1 + \cdots + a_Nx_N = 0\},$$ with $a= (a_1, \cdots, a_N) \in L^N \setminus \{0\}$ is given by $H(V) = H(a)$.\\

 We need the next lemma which is a generalisation of Roth's Lemma due to  Evertse (\cite{E}, Lemma 24)  and based on  Schmidt's result ( Chap. VI, Theorem 10B, \cite{SB}). 
\begin{lem} \label{lem8''}
Let  $d=(d_1, \cdots, d_l)$ be a tuple of positive integers and $ 0< \theta \leq 1$. Suppose that $$\dfrac{d_h}{d_{h+1}} \geq \dfrac{2l^2}{\theta}  \quad \mbox{with} \quad  h=1, \cdots, l-1.$$ Further, let $F$ be a non-zero polynomial of $\Gamma(d)$ and $V_1, \cdots, V_l$  be a  $(N-1)$-dimensional linear subspaces  of  $L^N$ with $$H(V_h)^{d_h} \geq \biggl\{ e^{d_1 + \cdots + d_l}H(F)\biggl\}^{(N-1) (3l^2 / \theta)^l}  \quad \mbox{with} \quad  h=1, \cdots, l.$$ Then there is $x_h \in V_h$ for $h=1, \cdots, l$ such that for $x=(x_1, \cdots, x_l)$, we have  ${\rm Ind}_{x,d}(F) < l\theta. $
\end{lem}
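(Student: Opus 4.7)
The plan is to reduce Lemma \ref{lem8''} to the classical Roth Lemma (Schmidt, Chap.\,VI, Thm.\,10B of \cite{SB}) by parameterizing each hyperplane $V_h$ through its defining equation. For each $h = 1, \dots, l$, I would write $V_h$ as the kernel of a non-zero linear form $a_h(y) = \sum_{j=1}^{N} a_{h,j} y_j$ with $H(a_h) = H(V_h)$, and pick an index $j_h$ where $\vert a_{h,j_h}\vert_v$ is locally maximal so that $y_{h,j_h} = -a_{h,j_h}^{-1}\sum_{j \neq j_h} a_{h,j} y_{h,j}$. Substituting this into $F$ yields a polynomial $G(y_1, \dots, y_l)$ in $l$ blocks of $N-1$ variables, still homogeneous of degree $d_h$ in the $h$-th block, and with height controllable by $H(G) \leq c_0^{d_1+\cdots+d_l}\,H(F)\prod_h H(V_h)^{d_h}$ for an absolute constant $c_0$ — this follows from \eqref{eq:k} applied to each monomial substitution and the multiplicativity of the height.

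Next I would invoke the block-wise Roth Lemma from Schmidt, Chap.\,VI, Thm.\,10B iteratively, once for each of the $l$ blocks of $G$. Since the gap hypothesis $d_h/d_{h+1} \geq 2l^2/\theta$ matches the standard gap assumption needed in Roth's Lemma, and since our height hypothesis $H(V_h)^{d_h} \geq \bigl\{e^{d_1+\cdots+d_l}H(F)\bigr\}^{(N-1)(3l^2/\theta)^l}$ has precisely the iterated $(3l^2/\theta)^l$ factor that absorbs $l$ applications of the lemma, the procedure produces a point $y = (y_1, \dots, y_l) \in L^{l(N-1)}$ at which the partial indices in each block are smaller than $\theta$. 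The multiplicativity of the index (Lemma 4A, Chap.\,VI of \cite{SB}) then yields ${\rm Ind}_{y,d}(G) < l\theta$. Finally, reading back $x_h \in V_h$ from the parameterization $y_h \mapsto (y_{h,1}, \dots, y_{h,j_h-1}, -a_{h,j_h}^{-1}\sum_{j\neq j_h} a_{h,j}y_{h,j}, y_{h,j_h+1}, \dots, y_{h,N})$ gives the desired point.

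The main obstacles are twofold. The first is bookkeeping the height inflation under the substitution: one has to verify that the $(N-1)$ exponent appearing in the statement correctly accounts for the single coordinate eliminated per block, and that the factor $e^{d_1+\cdots+d_l}$ in the hypothesis dominates the combinatorial factor $c_0^{d_1+\cdots+d_l}$ coming from \eqref{eq:k}. The second and more delicate obstacle is matching the index of $F$ at $(x_1, \dots, x_l)$ with the index of $G$ at $(y_1, \dots, y_l)$: because the substitution is linear and block-diagonal, partial derivatives with respect to $y_{h,j}$ are linear combinations of partial derivatives with respect to $x_{h,j}$ and $x_{h,j_h}$, so each partial derivative of $G$ of multi-order $\mathbf{i}$ is a linear combination of partial derivatives of $F$ of the same total order in each block; consequently ${\rm Ind}_{x,d}(F) \leq {\rm Ind}_{y,d}(G) < l\theta$, giving the conclusion.
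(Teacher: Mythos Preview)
The paper does not prove Lemma~\ref{lem8''} at all: it is quoted verbatim as Evertse's Lemma~24 in \cite{E} (itself a refinement of Schmidt's argument), so there is no ``paper's own proof'' to compare against. Your task, then, is really to reproduce Evertse's argument, and the sketch you give does not do that.

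Your initial move---parametrise each hyperplane $V_h$ by $N-1$ free coordinates and substitute into $F$---is correct and is indeed the first step of the standard proof. But the next paragraph contains two genuine gaps. First, you never confront the possibility that the substituted polynomial $G$ is \emph{identically zero}; this is precisely the heart of the matter, and the height hypothesis on the $V_h$ is there to rule it out (or, more accurately, to bound the order to which this can happen). If $G\equiv 0$ you cannot apply Roth's Lemma to it, and the whole argument collapses. Second, the phrase ``invoke Roth's Lemma iteratively, once for each of the $l$ blocks'' is a misconception: Schmidt's Theorem~10B is a single statement about all $l$ blocks simultaneously, and the $(3l^2/\theta)^l$ in the height hypothesis does not record ``$l$ applications'' but is the exponent appearing in one application of the $l$-block Roth Lemma. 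The factor $(N-1)$ in front of it comes from an induction on $N$ (reducing from $N$ variables per block to $N-1$, down to the base case $N=2$), not from anything you do block-by-block. Your sketch conflates the induction on $N$ with a non-existent iteration over the $l$ blocks, and as written it does not produce a point $y$ with small index---after the substitution you have no distinguished high-height points in $L^{N-1}$ to feed into Roth's Lemma. The correct architecture is: induct on $N$; at each step show (using the height hypothesis) that some low-order derivative $F_{\mathbf i}$ with $(\mathbf i/d)$ small restricts non-trivially to $V_1\times\cdots\times V_l$; then apply the inductive hypothesis to that restriction.
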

 Let $V$ be a $(N-1)$-dimensional linear subspace  of  $L^N$. A grid of size $A$ in $V$ is a set $$\Gamma = \{g_1x_1 + \cdots + g_{N-1}x_{N-1};~~ x_i \in \mathbb{Z}; ~~ \vert x_i \vert \leq A\}$$  where 
 $\{g_1, \cdots, g_{N-1}\}$ is a basis of $V.$ \\ 
 The following Lemma is proven in \cite{E} (See Lemma 26) using Schmidt's argument in the proof of Lemma 8A and 8B, Chap. VI in \cite{SB}.
\begin{lem} \label{lem9}
Let  $d=(d_1, \cdots, d_l),~F,~V_1, \cdots, V_l,~\theta$ having the same meaning of Lemma \ref{lem8''} and satisfy the condition of  Lemma \ref{lem8''}. For $h=1, \cdots, l$ let $\Gamma_h$ be any grid in $V_h$ of size $N /  \theta.$ Then there are $x_1 \in \Gamma_1, \cdots, x_l \in \Gamma_l$ such that for $x=(x_1, \cdots, x_l)$, we have  ${\rm Ind}_{x,d}(F) < 2l\theta.$
\end{lem}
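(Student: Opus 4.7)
The plan is to deduce Lemma~\ref{lem9} from Lemma~\ref{lem8''} by the discretization technique of Schmidt (cf.\ Chap.~VI, Lemmas 8A--8B of \cite{SB}). The doubling of the index bound from $l\theta$ to $2l\theta$ is exactly the overhead needed to absorb the transition from arbitrary points of the subspaces $V_h$ to integer lattice points of the grids $\Gamma_h$.

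First I would pass to coordinates adapted to the grids. For each $h=1,\ldots,l$ let $\{g_{h,1},\ldots,g_{h,N-1}\}$ be the basis of $V_h$ defining $\Gamma_h$, introduce the block of parameters $z_h = (z_{h,1},\ldots,z_{h,N-1})$, and set
\[
G(z_1,\ldots,z_l) \;:=\; F\!\left(\sum_{j} z_{1,j}\, g_{1,j},\; \ldots,\; \sum_{j} z_{l,j}\, g_{l,j}\right).
\]
Since each substitution $x_h \leftarrow \sum_j z_{h,j}\, g_{h,j}$ is linear, $G$ is again homogeneous of degree $d_h$ in the block $z_h$, and at corresponding points the chain rule (applied block by block, with no mixing of the $z_h$) gives $\mathrm{Ind}_{z,d}(G)=\mathrm{Ind}_{x,d}(F)$. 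Under this change of coordinates the grid $\Gamma_h$ becomes the integer box $B_h = \{z_h \in \mathbb{Z}^{N-1} : |z_{h,j}| \leq N/\theta\}$.

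Next I would argue by contradiction. Suppose $\mathrm{Ind}_{x,d}(F) \geq 2l\theta$ at every $x \in \Gamma_1 \times \cdots \times \Gamma_l$, i.e.\ every partial derivative $G_{\mathbf{i}}$ with $(\mathbf{i}/d) < 2l\theta$ vanishes at every integer point of $B_1 \times \cdots \times B_l$. The heart of the proof is a Lagrange-type interpolation step: treating the blocks one at a time, each fixed derivative $G_{\mathbf{i}}$, viewed as a polynomial in the block $z_h$ alone, has degree at most $d_h$ in $z_h$ and is forced by its vanishing at the $2\lfloor N/\theta\rfloor{+}1$ integer points along each axis (a count that dominates $d_h$ thanks to the rapid-decrease hypothesis $d_h/d_{h+1} \geq 2l^2/\theta$ together with $d_l \geq 1$) to vanish on a whole affine subspace. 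Iterating this block-by-block and carefully bookkeeping multiplicities produces an $L$-rational point $z^* \in L^{l(N-1)}$ at which $\mathrm{Ind}_{z^*,d}(G) \geq l\theta$; a budget of $\theta$ per block is spent during each one-variable interpolation, accounting for exactly half of the initial index. Transporting back via $x^*_h := \sum_j z^*_{h,j}\, g_{h,j} \in V_h$ yields $x^* \in V_1 \times \cdots \times V_l$ with $\mathrm{Ind}_{x^*,d}(F) \geq l\theta$, contradicting Lemma~\ref{lem8''}, which supplies at least one point of strictly smaller index.

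The main obstacle is this interpolation step: one has to track precisely how the index drops, from $2l\theta$ at integer points down to $l\theta$ at arbitrary algebraic points, as one ``de-discretizes'' the vanishing hypothesis one block at a time. The factor of $2$ in the conclusion of Lemma~\ref{lem9} and the degree-ratio hypothesis of Lemma~\ref{lem8''} both enter exactly here; everything else (the change of basis and the transport back to $V_h$) is routine linear algebra.
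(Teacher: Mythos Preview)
The paper does not prove this lemma in detail; it only cites Evertse \cite[Lemma~26]{E} and Schmidt \cite[Chap.~VI, Lemmas~8A--8B]{SB}. So the comparison is to the standard argument, and your sketch diverges from it in ways that create genuine gaps.

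First, your central mechanism is incorrect. You claim that the $2\lfloor N/\theta\rfloor+1$ integer points along each axis form ``a count that dominates $d_h$ thanks to the rapid-decrease hypothesis $d_h/d_{h+1}\ge 2l^2/\theta$ together with $d_l\ge 1$''. This is false: the decrease hypothesis forces $d_1\ge(2l^2/\theta)^{l-1}$, which is typically far larger than $2N/\theta+1$. The grid size does \emph{not} exceed the degree, so a polynomial of degree $d_h$ that vanishes at all $2\lfloor N/\theta\rfloor+1$ integer abscissae need not vanish identically, and your ``forced to vanish on a whole affine subspace'' step collapses.

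Second, the logical structure of your contrapositive is broken. Lemma~\ref{lem8''} asserts the \emph{existence} of a point in $V_1\times\cdots\times V_l$ with index $<l\theta$. Producing a single point $z^*$ with index $\ge l\theta$ does not contradict this; you would need index $\ge l\theta$ at \emph{every} point. Your sketch never establishes that.

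The actual argument runs in the opposite direction. Start from the point $y=(y_1,\ldots,y_l)\in V_1\times\cdots\times V_l$ with $\mathrm{Ind}_{y,d}(F)<l\theta$ supplied by Lemma~\ref{lem8''}, so some $F_{\mathbf{i}}$ with $(\mathbf{i}/d)<l\theta$ is nonzero at $y$. Pass to grid coordinates as you describe. Now replace the coordinates one at a time by integers in $[-N/\theta,N/\theta]$: viewing the current nonzero derivative as a one-variable polynomial of degree $\le d_h$ in the coordinate $z_{h,j}$, its total order of vanishing over the $2\lfloor N/\theta\rfloor+1$ integers in the box is at most $d_h$, so at some integer abscissa the order of vanishing is at most $d_h\theta/(2N)$. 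Differentiating that many more times in $z_{h,j}$ keeps a nonzero value and increases $(\mathbf{i}/d)$ by at most $\theta/(2N)$. Iterating over all $l(N-1)$ coordinates costs less than $l\theta$ in total, landing at a grid point with index $<2l\theta$. The grid size $N/\theta$ is calibrated precisely so that the per-variable loss is $\theta/(2N)$ and the aggregate loss over $l(N-1)<lN$ variables stays below $l\theta$; the degree-ratio hypothesis is used only inside Lemma~\ref{lem8''}, not in this discretization step.
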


In what follow,  we fix $(N, \overline{\gamma}, \mathcal{L})$ as defined in the proof of  Theorem \ref{thm2} meaning that $N=\binom{m}{t}$ with some integer $t$ and $m$ is the number of distinct roots of the characteristic polynomial $P$ of $\{u_n\}$, $K$ be the splitting field of $P$ and  $S$ be the set  containing all the prime ideals above $\alpha_i's$ and  all the archimedean  places over $K$ and $s$ its cardinality,
$ \overline{\gamma}=(\gamma_{iv}:~~v \in S;~~ i=1,\cdots N)$ with $\gamma_{iv} \leq s(v)$, by setting $\delta = \dfrac{\epsilon}{9m^3}$, we also have 
$$\sum_{i,v}^{} \gamma_{iv} \leq - \delta \quad \mbox{and} \quad \mathcal{L} =\{\overline{l}_{iv} :~~v \in S;~~ i=1,\cdots N \}$$ with $\vert\overline{l}_{iv} \vert_v =1$. We keep fixed $\Pi(Q),~~V(Q)$ for $\Pi(N, \overline{\gamma}, \mathcal{L},Q), ~~V(N, \overline{\gamma}, \mathcal{L},Q)$ respectively. We assume that $Q$ satisfy relation \eqref{eq:13'}. The next auxiliary result gives the relation between $Q$ and the height $H(V(Q))$ of $V(Q)$.

\begin{lem} \label{lem9"}
There is an $(N-1)$-dimensional linear subspace  of  $K^N$ with the following property. For every $Q$  with   \eqref{eq:13'}, we have 
$$V(Q) = V \quad \mbox{or} \quad H(V(Q)) \geq Q^{\dfrac{\delta}{3s}},$$ where $V$ is a fixed  $(N-1)$-dimensional linear subspace  of  $K^N.$
\end{lem}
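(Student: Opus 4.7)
The plan is to follow the strategy of Evertse's Lemma 27 in \cite{E}: isolate a distinguished $(N-1)$-dimensional subspace $V \subset K^N$ and show that any competitor $V(Q)\neq V$ must carry a height bounded below by $Q^{\delta/(3s)}$.

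The first step is to apply the adelic Minkowski theorem (Lemma \ref{lem1}) to the convex body $\Pi(Q) \subset V_K^N$. Since each $\overline l_{iv}$ has unit $v$-adic norm, the volume $\beta^N(\Pi(Q))$ is proportional to $Q^{d\sum_{v,i}\gamma_{iv}}|\Delta_K|^{-N/2}$, and combined with $\sum_{v,i}\gamma_{iv}\leq -\delta$ this yields successive minima $\lambda_1(Q)\leq\cdots\leq\lambda_N(Q)$ with $\prod_i\lambda_i(Q)\asymp R(Q):=Q^{-\sum_{v,i}\gamma_{iv}}\geq Q^{\delta}$, up to absolute constants involving $|\Delta_K|^{N/(2d)}$. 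Under \eqref{eq:13'}, we have $\lambda_{N-1}(Q)\leq 1<\lambda_N(Q)$, and hence the crucial estimate $\lambda_N(Q)\geq c_1 Q^{\delta}$: any $\mathcal O_S^N$-lattice vector outside $V(Q)$ requires scaling at least $\lambda_N(Q)$ to fit inside $\Pi(Q)$.

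The second step is to distinguish $V$ by setting $V := V(Q_\ast)$ where $Q_\ast$ is chosen so that $H(V(Q_\ast))$ is minimal among all values of $V(Q)$ with $Q$ satisfying \eqref{eq:13'}. Given any other $Q$ with $V(Q)\neq V$, a dimension count yields a nonzero $z\in V\cap\mathcal O_K^N$ with $z\notin V(Q)$; applying Siegel's lemma inside $V$, $z$ may be chosen with $H(z)$ controlled by a power of $H(V)$. Since $z\notin V(Q)$, the smallest $\lambda$ with $z\in\lambda\Pi(Q)$ is at least $\lambda_N(Q)\geq c_1 Q^{\delta}$, which at some place $v_0\in S$ and index $i_0$ forces
\[
|\overline l_{i_0 v_0}(z)|_{v_0}\;\geq\; c_1^{s(v_0)}\,Q^{\gamma_{i_0 v_0}+\delta s(v_0)}.
\]

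The third step combines this place-wise lower bound with the product formula applied to the nonzero scalar $a(Q)\cdot z\in K^{\times}$, where $a(Q)=(y_1\wedge\cdots\wedge y_{N-1})^*$ is the defining vector of $V(Q)$ obtained from generators $y_1,\dots,y_{N-1}\in\Pi(Q)\cap\mathcal O_S^N$ realizing the first $N-1$ minima. The wedge and scalar-product inequalities of Lemmas \ref{lm} and \ref{lem'} turn the place-wise imbalance into a global lower bound on $H(a(Q))=H(V(Q))$ of the form $Q^{\delta/(3s)}$. The exponent $\delta/(3s)$ reflects the distribution of the Minkowski gain $Q^{\delta}$ across the $s$ places of $S$, together with a universal factor of $3$ absorbed into the bookkeeping of the $\gamma_{iv}$ and the dual bases.

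The main technical obstacle is preserving the $Q^{\delta}$ advantage through all the place-wise estimates. Because $\gamma_{iv}$ can be positive at archimedean places (bounded above by $s(v)$ with $\sum_{v\mid\infty}s(v)=1$), the naive bounds $|y|_v\leq C_v Q^{s(v)}$ for $y\in\Pi(Q)$ introduce a factor growing polynomially in $Q$, and the delicate balancing step is to ensure that this growth does not swallow the $Q^{\delta}$ gain from Lemma \ref{lem1} when one multiplies over all $v\in S$ and divides through in the product formula identity for $a(Q)\cdot z$.
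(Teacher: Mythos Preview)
Your outline departs substantially from the paper's proof, and the central step of your argument is not actually carried out.

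\textbf{How the paper argues.} The paper does not invoke Minkowski's theorem at all for this lemma. Instead it works directly with the defining vector $g^*=(g_1\wedge\cdots\wedge g_{N-1})^*$ of $V(Q)$ and the scalars $D_{kv}=l_{kv}^*(g^*)=\det(\overline{l}_{iv}(g_j))_{i\neq k}$, which satisfy $|D_{kv}|_v\leq (N-1)!^{s(v)}Q^{\sum_i\gamma_{iv}-\gamma_{kv}}$. The dichotomy is purely combinatorial: either there exists a tuple $(i_v)_{v\in S}$ with all $D_{i_v,v}\neq 0$ and $\sum_v\gamma_{i_v,v}\geq -\delta/2$, in which case the product formula applied to the $D_{i_v,v}$ themselves yields $H(V(Q))\geq Q^{\delta/(3s)}$; or no such tuple exists, in which case the relations $D_{kv}=0$ for $k\notin I_v$ pin down $g^*$ to a hyperplane defined by a vector $h$ of bounded height produced via Lemma~\ref{lem1''}. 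Crucially, the exceptional subspace $V=\{x:x\cdot h=0\}$ is constructed intrinsically from the data $(\overline{l}_{iv},\gamma_{iv})$ and is visibly independent of $Q$.

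\textbf{Where your sketch breaks.} Your step~5 asserts that the product formula applied to $a(Q)\cdot z$, together with the single place-wise lower bound $|\overline{l}_{i_0v_0}(z)|_{v_0}\geq c_1^{s(v_0)}Q^{\gamma_{i_0v_0}+\delta s(v_0)}$ from step~4, yields $H(V(Q))\geq Q^{\delta/(3s)}$. But that inference is not justified. The scalar $a(Q)\cdot z=\det(z,g_1,\ldots,g_{N-1})$ involves $z$ through the values $\overline{l}_{kv}(z)$ at \emph{all} places and indices, and your lower bound controls only one of them; at the remaining places you have only the crude estimate $|\overline{l}_{kv}(z)|_v\leq N^{s(v)}|z|_v$, which introduces factors of $H(z)$ but no power of $Q$. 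Running the product formula then yields at best $H(a(Q))\geq c/H(z)$, an inequality independent of $Q$. The ``delicate balancing step'' you flag as the main obstacle is precisely this missing link, and your sketch does not resolve it. Separately, defining $V$ as $V(Q_*)$ for a height-minimizing $Q_*$ presupposes the conclusion (that there is at most one small-height $V(Q)$) and is not the intrinsic construction needed; the paper's $V$ depends only on $(\overline{\gamma},\mathcal{L})$, never on any particular $Q$.
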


 \begin{proof}
Fix $Q$  as in the relation  \eqref{eq:13'}. The first expression in relation \eqref{eq:13'} implies there are linearly independent vectors $g_1, \cdots, g_{N-1} \in \Pi(Q).$ Put $g^* = (g_1 \wedge \cdots \wedge g_{N-1})^*,$ where $x^*$ is defined at the page 6.  It clear from \eqref{eq:6} that 
$$V(Q) = \{ x \in K^N~:~~ g^* \cdot x =0\}.$$ Define the linear forms $$l_{kv}^* = \left(\overline{l}_{1v} \wedge \cdots \wedge \overline{l}_{k-1v} \wedge \overline{l}_{k+1v} \wedge \cdots \wedge \overline{l}_{Nv}\right)^*$$ and put
$$D_{kv} =  l_{kv}^* (g^*) \quad \mbox{for} \quad v \in S;~~ i=1,\cdots N.$$ By Laplace identity \eqref{eq:5} we have  $$D_{kv} =  {\rm det}(\overline{l}_{iv}(g_j)_{i\neq k,j})$$ and using the fact  that  $g_1, \cdots, g_{N-1} \in \Pi(Q)$, one deduces 
$\vert l_{iv}(g_j) \vert_v \leq Q^{\gamma_{iv}}$ for $v \in S,~~i=1, \cdots, N; ~~j=1, \cdots, N-1.$ Therefore, it follows from Leibniz formula of the determinant that
\begin{equation}\label{eq:28}
\vert D_{kv} \vert_v  \leq  (N-1)!^{s(v)} Q^{\gamma_{1v} + \cdots +\gamma_{Nv}- \gamma_{kv}}.
\end{equation}
Suppose for the moment there is a tuple $(i_v; ~~v \in S)$ with $i_v \in \{1, \cdots, N\}$  such that
\begin{equation}\label{eq:29}
D_{i_v,v} \neq 0.
\end{equation}
  and 
\begin{equation}\label{eq:30}
\sum_{v}^{} \gamma_{i_v,v} \geq - \frac{\delta}{2}.
\end{equation}
 Then, one get 
\begin{equation}\label{eq:31}
\prod_{v \in S}^{} \vert D_{i_v,v}  \vert_v \leq (N-1)! Q^{-\delta /2}.          
\end{equation} 
Fix $v \in S$ and $k:=i_v$. The product formula applied to $D_{kv}$ and the relation \eqref{eq:7} give us 
$$
\begin{array}{lcl}
1 &= & \prod_{w \in M_K}^{} \vert D_{kv}  \vert_w\\\\
   
   &\leq&  \vert D_{kv}  \vert_v \left( \prod_{w \neq v}^{} N^{s(w)} \vert  l_{kv}^* \vert_w \vert  g^* \vert_w \right) \\\\\\
                                                                     
  &\leq& \dfrac{ \vert D_{kv}  \vert_v }{ N^{s(v)} \vert  l_{kv}^* \vert_v \vert  g^* \vert_v}   \left( \prod_{w \in M_K}^{} N^{s(w)} \vert  l_{kv}^* \vert_w \vert  g^* \vert_w \right)\\\\\
  
   &\leq& \dfrac{ \vert D_{kv}  \vert_v }{ N^{s(v)-1} \vert  l_{kv}^* \vert_v \vert  g^* \vert_v}   H (l_{kv}^*) H( g^* ).
\end{array}
$$
This implies $$\vert D_{kv}  \vert_v  \geq N^{s(v)-1} \vert  l_{kv}^* \vert_v \vert  g^* \vert_v  H (l_{kv}^*)^{-1} H( g^* )^{-1}.$$ Together with  Lemma \ref{lem'} (relation \eqref{eq:11} with $H=m!$), we obtain  
$$\vert  l_{kv}^* \vert_v  \geq \frac{(m!)^{1-N}}{(N-1)!}\quad \mbox{and} \quad H (l_{kv}^*) \leq (N-1)! (m!)^{N-1}.$$  Hence $$ \prod_{v \in S}^{} \vert D_{kv}  \vert_v \geq  N(m!)^{-2s(N-1)} (N)!^{-2s} H(V(Q))^{-s} \cdot \prod_{v \in S}^{} \vert g^*  \vert_v .$$
By combining this with \eqref{eq:31}, we obtain $$H(V(Q))^{s} \geq  N(m!)^{-2s(N-1)} (N)!^{-2s-1} Q^{\delta /2}  \geq Q^{\delta /3},$$ where, for the last inequality we have used the fact that $N=\binom{m}{t} \leq 2^m$ and $$Q^{-\delta /6} < (9m!)^{-\dfrac{\delta e^{2C"}}{6}} < N(m!)^{-2s(N-1)} (N)!^{-1}$$ with $C"$ defined into the Theorem \ref{thm3}.  Now assume there is no tuple  $(i_v; ~~v \in S)$ with $i_v \in \{1, \cdots, N\}$ satisfying  \eqref{eq:29} and \eqref{eq:30}. We show there is a fixed $(N-1)$-dimensional linear subspace $V$ of $K^N,$ independent of $Q$ such that $V(Q) = V.$ Fix $v \in S$. Put $$I_v = \{i \in \{1, \cdots, N \} ~~:~~D_{iv} \neq 0\}.$$ We have 
\begin{equation}\label{eq:32}
l_{kv}^*(g^*) = 0 \quad \mbox{for} \quad v \in S, ~~ k \in \{1, \cdots, N\} \setminus  I_v          
\end{equation} 
 If  $I_v = \{1, \cdots, N\}$ for each $v \in S$, take $h=(1, 0, \cdots, 0)$ and we define  $$V := \{x \in K^N~:~~ x \cdot h =0\}.$$ Otherwise,
by the relation \eqref{eq:10} and the definition of $\overline{l}_{iv}$ in the proof of theorem \ref{thm2}, we have $$H(l_{kv}^*) \leq  (N-1)! (m!)^{N-1} \quad \mbox{for} \quad v \in S, ~~ i = 1, \cdots, N.$$ Since the rank of the set of such $l_{kv}^*$ is less than $N-1$ (by the definition of $g^*$), it follows from  Lemma \ref{lem1''} (with $H=  (N-1)! (m!)^{N-1}$) the existence of  a non-zero vector $h \in K^N$ such that
\begin{equation}\label{eq:33}
l_{kv}^*(h) = 0 \quad \mbox{for} \quad v \in S, ~~ i \in \{1, \cdots, N\}\setminus I_v          
\end{equation} 
with $$H(h) \leq (N-1)!^{N} (m!)^{(N-1)^2}.$$ Therefore, we define $$V := \{x \in K^N~:~~ x \cdot h =0\}.$$
We now want to show that $V(Q) =V.$ Since $V(Q)$
 is the vector space generated by $\Pi(Q)$ and $V(Q)$ and $V$ have dimension $N-1$, it suffices to show that $\Pi(Q) \subset V.$ Let $x \in  \Pi(Q)$, we will prove that $x\cdot h = 0$. Put $\Delta_v = {\rm det}(\overline{l}_{1v}, \cdots, \overline{l}_{Nv}).$ Using the fact that $\{\overline{l}_{1v}, \cdots, \overline{l}_{Nv}\}$ is a basis of $K^N$, we obtain $$x \cdot h = \Delta_v^{-1} \sum_{i=1}^{N} \overline{l}_{iv}(x) l_{iv}^*(h).$$ In view of the relation \eqref{eq:33}, $$x \cdot h = \Delta_v^{-1} \sum_{i\in I_v}^{} \overline{l}_{iv}(x) l_{iv}^*(h).$$ From Lemma \ref{lem1''} (the relation \eqref{eq:12} with $H=m!$), we have 
\begin{equation}\label{eq:34}
\vert \Delta_v\vert_v^{-1}  \leq N!(m!)^N \quad \mbox{for} \quad v \in S.          
\end{equation} 
Further, by applying the relation \eqref{eq:7} and \eqref{eq:9}, we get  
\begin{equation}\label{eq:h}
\vert l_{iv}^*(h) \vert_v \leq \vert \overline{l}_{1v} \wedge \cdots \wedge \overline{l}_{i-1v} \wedge \overline{l}_{i+1v} \wedge \cdots \wedge \overline{l}_{Nv} \vert_v \cdot \vert h \vert_v \leq  \left( N!(m!)^{N-1}\right)^{s(v)} \vert h \vert_v.
\end{equation}
If there exists $v \in S$ such that $I_v = \emptyset$ then, using the above equality and \eqref{eq:33}, ones deduce $x \cdots h =0$, therefore  the desired result follows. Otherwise; for $v \in S,$ choose $j_v \in I_v$ such that $\gamma_{j_v,v} = \max_{i \in I_v} \gamma_{iv}$. Since $x \in \Pi(Q)$, it follows $$\vert  \overline{l}_{iv}(x) \vert_v \leq  Q^{\gamma_{j_v,v}} \quad \mbox{for} \quad v \in S,~~ i \in I_v.$$ Combining this inequality  with   \eqref{eq:33} and  \eqref{eq:h}, we deduce 
\begin{equation}\label{eq:i}
\begin{array}{lcl}
\vert x\cdot h \vert_v &= & \vert \Delta_v\vert_v^{-1}  \big|  { \displaystyle \sum_{i\in I_v}^{} \overline{l}_{iv}(x) l_{iv}^*(h)} \big|_v \\
   
   &\leq&  N!(m!)^N \cdot N^{s(v)} \cdot  \left( N!(m!)^{N-1}\right)^{s(v)} \vert h \vert_v Q^{\gamma_{j_v,v}}.                                                                     
\end{array}
\end{equation}
Further $j_v \in I_v$ for $v \in S,$ the tuple $(j_v,~~ v\in S)$ satisfy \eqref{eq:29}, so by assumption, it does not satisfy  \eqref{eq:30}. Together with \eqref{eq:i} and the product formula, if  $\vert x\cdot h \vert_v \neq 0,$ then

$$
\begin{array}{lcl}
1 &= & \prod_{v \in S} \vert x \cdot h \vert_v \cdot   \prod_{v \notin S} \vert x \cdot h \vert_v \\\\
   
   &\leq&  (m!)^{Ns} \cdot N!^s N \cdot  \left( N!(m!)^{N-1}\right) Q^{-\delta / 2} H(h)\\\\
    
    &\leq&  (m!)^{Ns + (N-1)^2 +(N-1)} \cdot (N!)^{N+s+1}  Q^{-\delta / 2}                                                                
\end{array}
$$
which contradicts the fact that $Q> (9m!)^{e^{2C"}}$ by using the fact that $N \leq 2^m$.
 \end{proof}
We need the following gap principle.
\begin{lem} \label{lem8}
Let $a,~b$ be reals with $b>a>(9m!)^{e^{2C"}}.$ There is a collection of $(N-1)$-dimensional linear subspaces of $K^N$ of cardinality at most $$C(a,b) = 1 + 4 \delta^{-1} \log \left( \dfrac{\log b}{\log a} \right)$$ such that for every $Q$ satisfying \eqref{eq:13'} and $a \leq Q \leq b,$ the vector space $V(Q)$ belongs to this collection. 
\end{lem}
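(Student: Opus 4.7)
The plan is to apply the dichotomy in Lemma \ref{lem9"} together with a standard gap principle and then a geometric subdivision of $[a,b]$. By Lemma \ref{lem9"}, for every $Q$ satisfying \eqref{eq:13'} one either has $V(Q) = V$ (the fixed $(N-1)$-dimensional subspace of $K^N$) or $H(V(Q)) \geq Q^{\delta/(3s)}$. The subspace $V$ accounts for the $+1$ in $C(a,b)$, so it suffices to bound the number of distinct $V(Q) \neq V$ arising from $Q \in [a,b]$ satisfying \eqref{eq:13'}.

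The heart of the proof is the following gap principle: if $a \leq Q_1 < Q_2 \leq b$ both satisfy \eqref{eq:13'} and $V(Q_1) \neq V(Q_2)$, then $Q_2 \geq Q_1^{\,1+\delta/2}$. To establish it, pick a basis $g_1, \dots, g_{N-1} \in \Pi(Q_1)$ of $V(Q_1)$; since $\dim V(Q_1) = \dim V(Q_2) = N-1$ and the two spaces are distinct, one can choose $y \in \Pi(Q_2) \setminus V(Q_1)$, so $D := \det(y, g_1, \dots, g_{N-1}) \neq 0$ and the product formula yields $\prod_{v \in M_K} |D|_v = 1$. For $v \notin S$, the inclusions $y, g_j \in \mathcal{O}_S^N$ together with $s(v)=0$ give $|D|_v \leq 1$. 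For $v \in S$, write $D = \det(\overline{l}_{1v}, \dots, \overline{l}_{Nv})^{-1}\, \det(\overline{l}_{iv}(\mathrm{col}_j))_{i,j}$ and expand the inner determinant by Leibniz: in each permutation exactly one factor involves the column $y$ (contributing $Q_2^{\gamma_{i^*v}}$) while the other $N-1$ involve $g_1, \dots, g_{N-1}$ (contributing at most $Q_1^{\sum_i \gamma_{iv} - \gamma_{i^*v}}$), so using $\gamma_{i^*v} \leq s(v)$ and the bound \eqref{eq:34} on the outer determinant one finds
$$|D|_v \;\leq\; |\det(\overline{l}_{1v},\dots,\overline{l}_{Nv})|_v^{-1}\,(N!)^{s(v)}\,Q_1^{\sum_i \gamma_{iv}}\,(Q_2/Q_1)^{s(v)}.$$
Multiplying over $v \in S$ and invoking $\sum_{v,i}\gamma_{iv} \leq -\delta$ and $\sum_{v \in S} s(v) = 1$ gives $1 \leq C_0\, Q_1^{-\delta}(Q_2/Q_1)$ with $C_0$ depending only on $m$ and $s$. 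The explicit value $C'' = 2^{11m+34}s^2\epsilon^{-4}m^{12}$ makes the hypothesis $a > (9m!)^{e^{2C''}}$ strong enough to guarantee $C_0 \leq Q_1^{\delta/2}$, whence $Q_2 \geq Q_1^{\,1 + \delta/2}$.

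The count then follows by a geometric subdivision. Cover $[a,b]$ by slabs $[Q_k, Q_k^{1+\delta/2}]$ with $Q_0 = a$; by the gap principle, within any single slab all admissible $Q$ with $V(Q) \neq V$ must yield the \emph{same} subspace. The number of slabs needed is at most $\lceil \log(\log b/\log a)/\log(1+\delta/2)\rceil$, and since $\log(1+\delta/2) \geq \delta/4$ for the (small) values of $\delta = \epsilon/(9m^3)$ considered, this is bounded by $4\delta^{-1}\log(\log b/\log a)$. Adding the fixed subspace $V$ yields the claimed cardinality $C(a,b)$.

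The main obstacle is the determinant bookkeeping in the gap principle: one must arrange for exactly \emph{one} of the $N$ columns of $D$ to produce a factor involving $Q_2$, while the remaining $N-1$ produce $Q_1$, so that the total extra factor over all $v$ is only $(Q_2/Q_1)^{\sum_{v \in S} s(v)} = Q_2/Q_1$. Combined with the negative total $\sum_{v,i}\gamma_{iv} \leq -\delta$, this is precisely what forces a factor $Q_1^{\delta}$ out of the product formula; the absolute constant $C_0$ is then absorbed into $Q_1^{\delta/2}$ by the lower bound on $a$, which is why $C''$ enters in the doubly-exponential form $e^{2C''}$.
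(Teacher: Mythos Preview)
Your argument is correct and is essentially the standard gap principle that the paper has in mind when it writes ``similar to Lemma~28 in \cite{E}'': pick $N-1$ independent vectors from $\Pi(Q_1)$ and one from $\Pi(Q_2)$ outside $V(Q_1)$, bound the nonzero determinant place by place using $|\overline l_{iv}(g_j)|_v\le Q_1^{\gamma_{iv}}$, $|\overline l_{iv}(y)|_v\le Q_2^{\gamma_{iv}}$ and $\gamma_{iv}\le s(v)$, and then use the product formula and $\sum_{i,v}\gamma_{iv}\le -\delta$ to force $Q_2\ge Q_1^{1+\delta/2}$ once the absolute constant is absorbed by the hypothesis $a>(9m!)^{e^{2C''}}$; the geometric subdivision then gives the count. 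One small remark: your appeal to Lemma~\ref{lem9"} at the outset is actually unnecessary, since your gap principle already applies to \emph{any} two $Q_1<Q_2$ with $V(Q_1)\ne V(Q_2)$ (you never use $V(Q_h)\ne V$ in the determinant estimate), so the slab count alone already yields at most $\lceil \log(\log b/\log a)/\log(1+\delta/2)\rceil\le 1+4\delta^{-1}\log(\log b/\log a)=C(a,b)$ subspaces.
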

\begin{proof}
The proof is  similar to Lemma 28 in \cite{E} so let to the reader
\end{proof}

Let  us consider $F \in \Gamma(d)$. For each $v \in S,$ we put $$U_{hiv} =  \overline{l}_{iv}(x_h) \quad h=1, \cdots, l;  ~~ i=1, \cdots, N$$ and 
$$F_{\textbf{i}} = \sum_{\textbf{j}}^{} C(\textbf{i},\textbf{j},v) U_{11v}^{j_{11}} \cdots U_{lNv}^{j_{lN}}$$ where the sum is taken over tuples  $\textbf{j}$ with $$\sum_{k=1}^{N} j_{hk} = d_h - \sum_{k=1}^{N} i_{hk} \quad \mbox{for} \quad h=1, \cdots l.$$

\begin{lem} \label{lem10}
Let $\theta$ be real with $\theta < 1/N$. Assume $l > 4\theta^{-2} \log (2Ns)$ and $d=(d_1, \cdots, d_l)$ any $l$-tuple of positive integers. Then there is a polynomial $F \in \Gamma(d)$ with the following properties:
\begin{itemize}
\item  $H(F) \leq (8^Nm!)^{d_1 + \cdots + d_l}.$
\item For all tuple $v \in S$ and all $\textbf{i} ,~\textbf{j} $ with $\left( \textbf{i} /d \right) \leq 2l\theta$,  $$ C(\textbf{i}, \textbf{j},v) \neq 0 \quad \mbox{implies} \quad   \max_{k} \Bigg| \sum_{h=1}^{l} \frac{j_{hk}}{d_h} - \frac{l}{N} \bigg| < 3lN \theta.$$
\item for all tuple $\textbf{i}$,we have $$\prod_{v \in S}^{} \max_{\textbf{j}} \vert C(\textbf{i}, \textbf{j}, v)\vert_v \leq \left( 2 \cdot 16^N m! N!^{s+1}\right)^{ d_1 + \cdots + d_l}.$$
\end{itemize}
\end{lem}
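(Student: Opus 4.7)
The plan is to apply the Bombieri--Vaaler version of Siegel's lemma (Lemma \ref{lem1}) to produce $F\in\Gamma(d)$ whose partial derivatives $F_{\textbf{i}}$ vanish, for every $v\in S$ and every $\textbf{i}$ with $(\textbf{i}/d)\le 2l\theta$, at all ``extremal'' monomials in the $U_{hiv}$-expansion. This mirrors the construction in \cite[Lem.\,29]{E}, which I would adapt to the present adelic setup using the linear forms $\overline{l}_{iv}$ in place of standard coordinates. Let $\mathcal{V}$ denote the $\mathbb{Q}$-vector space of polynomials that are homogeneous of degree $d_h$ in $x_h$ for each $h$; its dimension is $D=\prod_{h=1}^{l}\binom{N+d_h-1}{N-1}$. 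Since $\{\overline{l}_{1v},\ldots,\overline{l}_{Nv}\}$ is a $K$-basis of $K^N$ for every $v\in S$, the change of variables $U_{hkv}=\overline{l}_{kv}(x_h)$ is invertible, and expanding $F_{\textbf{i}}$ in the $U_{hkv}$ uniquely determines the coefficients $C(\textbf{i},\textbf{j},v)$.

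The conditions to impose are $C(\textbf{i},\textbf{j},v)=0$ for every $v\in S$, every $\textbf{i}$ with $(\textbf{i}/d)\le 2l\theta$, and every $\textbf{j}$ (subject to $\sum_k j_{hk}=d_h-\sum_k i_{hk}$) which is \emph{bad} in the sense that $\max_k|\sum_{h=1}^l j_{hk}/d_h - l/N|\ge 3lN\theta$. The key combinatorial input is a Chebyshev/large-deviation estimate: for any fixed admissible $\textbf{i}$, the proportion of bad $\textbf{j}$ among all admissible ones is at most $2Ne^{-cl\theta^{2}}$ for an absolute constant $c>0$, obtained by applying a one-sided moment inequality to each of the $l$ blocks independently. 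The hypothesis $l>4\theta^{-2}\log(2Ns)$ is then calibrated so that the total number of imposed vanishing equations, summed over $v\in S$ and over admissible $\textbf{i}$, stays strictly below $D/2$. Bombieri--Vaaler applied to the resulting system of linear equations yields a nonzero $F\in\Gamma(d)$ with height bounded by $(8^{N}m!)^{d_1+\cdots+d_l}$, the constant $8^{N}m!$ absorbing a $2^{N}$-type factor from Siegel's lemma, a $2^{N}$ factor from the binomial coefficients $\binom{d_h}{i_{h1},\ldots,i_{hN}}$ appearing in $F_{\textbf{i}}$, and an $m!$ factor from the bounds on $|\overline{l}_{iv}|_v$ and their duals furnished by Lemma \ref{lm}.

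To establish the third assertion, I would invert the change of variables to write each $C(\textbf{i},\textbf{j},v)$ as a $K$-linear combination of coefficients of $F_{\textbf{i}}$, with coefficients given by products of the forms dual to $\overline{l}_{1v},\ldots,\overline{l}_{Nv}$. Applying inequality \eqref{eq:7} of Lemma \ref{lm} and the upper bound in Lemma \ref{lem'} (with $H=m!$), each $|C(\textbf{i},\textbf{j},v)|_v$ is controlled by a product of at most $(N!(m-1)!^{N})^{s(v)}$ times $|F_{\textbf{i}}|_v$. Taking the product over $v\in S$, using $|F_{\textbf{i}}|_v\le 2^{(d_1+\cdots+d_l)s(v)}|F|_v$ from \eqref{eq:j}, the product formula to pass from the $v\in S$ part of $H(F_{\textbf{i}})$ to the global height bounded by $2^{d_1+\cdots+d_l}H(F)$, and finally the first bullet's bound on $H(F)$, gives the desired estimate $\prod_{v\in S}\max_{\textbf{j}}|C(\textbf{i},\textbf{j},v)|_v\le (2\cdot 16^{N}m!\,N!^{s+1})^{d_1+\cdots+d_l}$ after grouping constants.

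The main obstacle I anticipate is the sharpness of the large-deviation estimate: one must make the bound $2Ne^{-cl\theta^{2}}$ small enough not just per $(v,\textbf{i})$ but also after summing the number of vanishing constraints over all $v\in S$ and all admissible $\textbf{i}$, and this is precisely what the hypothesis $l>4\theta^{-2}\log(2Ns)$ is designed for. A second subtlety is to route the accounting so that the final height bound $(8^{N}m!)^{d_1+\cdots+d_l}$ is uniform in the block sizes $d_h$ (no hidden factor depending on $\max d_h$), which requires careful use of the homogeneity of $F$ and of the explicit basis of $\Gamma(d)$ in both the Siegel step and the passage through \eqref{eq:j}--\eqref{eq:k}.
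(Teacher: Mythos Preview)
Your approach is correct but takes a genuinely different route from the paper. The paper does not build $F$ from scratch via Siegel's lemma and a large-deviation count; instead it observes that among the $\overline{l}_{iv}$ there are at most $t\le Ns$ pairwise non-proportional forms, so that the hypothesis $l>4\theta^{-2}\log(2Ns)\ge 4\theta^{-2}\log(2t)$ is exactly the input to Theorem~6A of \cite[Chap.\,VI]{SB}, which is invoked as a black box to produce $F\in\Gamma(d)$ with $\gcd$-one integer coefficients and the height bound $(8^{N}m!)^{d_1+\cdots+d_l}$. The second bullet is then read off directly from the Polynomial Theorem (Theorem~7A of \cite[Chap.\,VI]{SB}). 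Your plan---impose $C(\textbf{i},\textbf{j},v)=0$ for all bad $\textbf{j}$, bound the number of such constraints by a Chebyshev estimate, and solve by Bombieri--Vaaler---is precisely how one \emph{proves} Schmidt's Theorem~6A, so you are re-deriving a result the paper simply cites. Both routes are valid; the paper's is much shorter, while yours would be self-contained and make the role of the hypothesis on $l$ visible at the level of the constraint count. For the third bullet the two arguments coincide in substance: the paper inverts the change of variables by Cramer's rule, writing $x_{hi}=\sum_k\eta_{ik}U_{hkv}$ with $|\eta_{ik}|_v\le (N-1)!^{s(v)}N!$ from \eqref{eq:12}, substitutes into $F_{\textbf{i}}$, and collects; your ``dual forms'' formulation is the same computation phrased through Lemma~\ref{lem'} instead of an explicit cofactor bound, and leads to the same constant after taking the product over $v\in S$.
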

\begin{proof}
By definition of $\overline{l}_{iv}$, its coefficients are rational numbers. Let $t$ be the maximal number of pairwise non-proportional linear form among  $\overline{l}_{iv}~~(v \in S~~i=1, \cdots, N).$ Then $t \leq Ns$ and we 
have $l > 4\theta^{-2} \log (2t)$. This is precisely the condition of Theorem 6A, Chap.VI in \cite{SB}. Then there exists a polynomial $F \in \Gamma(d)$ having the coefficients of ${\rm \gcd}$ equals $1.$ For 
$v \in S$ is infinite, we have $\vert \overline{l}_{iv} \vert_v \leq m!.$ Together with Theorem 6A, Chap.VI  in \cite{SB}, the first point of our Lemma is proven and by the Polynomial Theorem (Theorem 7A,  Chap.VI in \cite{SB}) we have our second point. Fix $v \in S$. Since the coefficients of  $F$ have ${\rm gcd} ~~1$, it follows that  $$ \vert F \vert_v = H(F)^{s(v)} \leq (8^N m!)^{s(v) (d_1 + \cdots + d_l)}.$$ Together with \eqref{eq:j} give us 
\begin{equation}\label{eq:34'}
 \vert F_{\textbf{i}} \vert_v   \leq   (2\cdot8^N m!)^{s(v) (d_1 + \cdots + d_l)} 
\end{equation} 

Fix $v \in S$ and consider the matrix $A_v = (\overline{l}_{1v}, \cdots, \overline{l}_{Nv})$. By Cramer's rule, we have 
\begin{equation}\label{eq:35'}
x_{hi} = \sum_{k=1}^{N} \eta_{ik} U_{hkv}  \quad \mbox{for} \quad h=1, \cdots, l;~~i=1, \cdots, N
\end{equation} 
 where $\eta_{ik} = \pm \Delta_{ik} \Delta_{v}^{-1},$ with $\Delta_{ik}$ is the determinant of the matrix  obtained by removing the $i^{th}$ row and $k^{th}$ column of $A_v$  and $\Delta_v = {\rm det}(A_v)$. Using Leibniz formula for the determinant and the fact that $\vert \overline{l}_{iv} \vert_v =1$, we obtain $\vert \Delta_{ij} \vert_v \leq (N-1)!^{s(v)}$ for $i,j = 1,~\cdots, N.$ Together with \eqref{eq:12} and the fact that the coordinates of $\overline{l}_{iv}$ are $0$ or $1$, we get 
\begin{equation}\label{eq:35}
\vert \eta_{ij} \vert_v \leq (N-1)!^{s(v)} \cdot N!.
\end{equation} 
Write $$F_{\textbf{i}}(x_1, \cdots, x_n) = \sum_{\textbf{j}}^{} P(\textbf{i}, \textbf{j})x_{11}^{j_{11}} \cdots x_{lN}^{j_{lN}},$$ where $x_i=(x_{i1}, \cdots, x_{iN})$. By inserting  \eqref{eq:35'}, we get 
$$F_{\textbf{i}}(x_1, \cdots, x_n) = \sum_{\textbf{j}}^{} P(\textbf{i}, \textbf{j}) \prod_{h=1}^{l} \prod_{k=1}^{N} \left( \sum_{j=1}^{N} \eta_{kj} U_{hjv}  \right)^{j_{hk}}.$$ Put 
$$ \vert F_{\textbf{i}} \vert_v   = \max_{\textbf{j}} \vert  P(\textbf{i}, \textbf{j}) \vert_v.$$ We finally obtain $$F_{\textbf{i}}(x_1, \cdots, x_l) = \sum_{\textbf{p}}^{} C(\textbf{i} ,\textbf{p} ,v) U_{11v}^{p_{11}} \cdots U_{lNv}^{p_{lN}},$$ where, for $v$  infinite place over $K$,  $\vert \cdot \vert_v^{1/s(v)}$ is the usual absolute value and so  

$$
\begin{array}{lcl}
\vert C(\textbf{i}, \textbf{p},v)  \vert_v^{\frac{1}{s(v)}} &\leq& \bigg| {\displaystyle \sum_{\textbf{j}}^{} P(\textbf{i}, \textbf{j}) \prod_{h=1}^{l} \prod_{k=1}^{N} \left( \sum_{j=1}^{N} \eta_{kj}  \right)^{j_{hk}} } \bigg|_v^{\frac{1}{s(v)}}\\\\
   
   &\leq& {\displaystyle \sum_{\textbf{j}}^{}  \vert F_{\textbf{i}} \vert_v^{\frac{1}{s(v)}}  \Bigg|  \prod_{h=1}^{l} \prod_{k=1}^{N} \left( \sum_{j=1}^{N} \eta_{kj}  \right)^{j_{hk}} } \bigg| _v^{\frac{1}{s(v)}} \\\\
    
    &\leq& {\displaystyle \sum_{\textbf{j}}^{}  \vert F_{\textbf{i}} \vert_v^{\frac{1}{s(v)}}    \prod_{h=1}^{l} \prod_{k=1}^{N} \left(N! N!^{\frac{1}{s(v)}} \right)^{j_{hk}}}.                                                                
\end{array}
$$
Together with \eqref{eq:34'}, we deduce $$\vert C(\textbf{i},\textbf{p},v)  \vert_v \leq \left( 2 \cdot 16^N m! \right) ^{s(v) (d_1 + \cdot + d_l)} \left( N!^{s(v)} (N!) \right)^{d_1 + \cdots + d_l}$$ for any $v \in S$. By taking the product over $v \in S$, we finally get 

$$
\begin{array}{lcl}
\prod_{v \in S}^{} \max_{\textbf{p}} \vert C(\textbf{i}, \textbf{p}, v)\vert_v &\leq& \left( 2 \cdot 16^N m! \right) ^{ d_1 + \cdot + d_l} \left( N!^{s+1}  \right)^{d_1 + \cdots + d_l} \\\\
   
                                                                       &\leq&  \left( 2 \cdot 16^N  m!N!^{s+1}\right)^{ d_1 + \cdots + d_l}.

\end{array}
$$
 
\end{proof}

Now we present the last step of the proof of Theorem \ref{thm3}.
 Let $(N, \overline{\gamma}, \mathcal{L})$ as constructed in the proof of  Theorem \ref{thm2} meaning that $N=\binom{m}{t}$ with some integer $t$ and $m$ is the number of distinct roots of the characteristic polynomial of $\{u_n\}$ and  $ \overline{\gamma}=(\gamma_{iv}:~~v \in S;~~ i=1,\cdots N)$ with $\gamma_{iv} \leq s(v)$, by setting $\delta = \dfrac{\epsilon}{9m^3}$, we also have 
$$\sum_{i,v}^{} \gamma_{iv} \leq - \delta \quad \mbox{and} \quad \mathcal{L} =\{\overline{l}_{iv} :~~v \in S;~~ i=1,\cdots N \}$$ with $\vert\overline{l}_{iv} \vert_v =1$.  Put $\theta = \delta/ (30N^3)$ and we recall that $C" =  2^{11m+34}s^2 \epsilon^{-4} m^{12}$. Let $l$ be the smallest integer such that $l> 4 \theta^{-2} \log (2Ns)$. So $l < 5 \theta^{-2} \log (2Ns)$. We assume that the number of subspaces $V(Q)$ satisfying  \eqref{eq:13'} has cardinality $> C"$ and will derive a contradiction from that.  By denoting $\lfloor \cdot \rfloor$ the floor function, this number of subspaces is more than 
$$2+ (l-1)t, \quad \mbox{with} \quad t= 2 + \lfloor 4 \delta^{-1} \log(4l^2 \theta^{-1}) \rfloor $$ since, using the fact that $N<2^m$ and $l< 5 \theta^{-2} \log (2Ns)$, we get $2+ (l-1)t < C"$. By Lemma \ref{lem9"}, there are real $Q_1',~ \cdots,~ Q_{1+(l-1)t}'$ with  the relation \eqref{eq:13'} and 
$$ Q_1' <  Q_2' <  \cdots  < Q_{1+(l-1)t}' $$  such that the spaces  $V(Q_1'), \cdots, V(Q_{1+(l-1)t}')$ are all distinct and different from $V$, therefore 
\begin{equation}\label{eq:l}
H(V(Q_i')) \geq Q_i'^{\dfrac{\delta}{3s}} \quad \mbox{for} \quad i=1, \cdots, 1+t(l-1).
\end{equation}
We set $$Q_1 =Q_1',~ Q_2=Q_{1+t}',~ \cdots,~ Q_l = Q_{1+t(l-1)}'.$$ and 
$$V_h = V(Q_h) \quad \mbox{for} \quad h= 1, \cdots, l.$$ There are $t$ different spaces $V(Q)$ with $Q_h \leq Q \leq Q_{h+1}$. By applying Lemma \ref{lem8} with $a=Q_{h+1}$ and $b=Q_h$, we obtain  $$Q_{h+1} \geq Q_{h}^{4l^2 \theta^{-1}} \quad \mbox{for} \quad h= 1, \cdots, l-1.$$ We define  $$d_1 = 1 + \Bigl\lfloor \frac{\log Q_l}{\theta \log Q_1} \Bigr\rfloor.$$ Let us define $d_h$ as an integer satisfying the relation 
\begin{equation}\label{eq:36}
d_1\log Q_1 \leq d_h \log Q_h <(1+ \theta) d_1 \log Q_1  \quad \mbox{for} \quad h= 2, \cdots, l.
\end{equation}
Furthermore, we may take $$d_h = 1 + \Bigl\lfloor \frac{d_1 \log Q_1}{ \log Q_h} \Bigr\rfloor \quad \mbox{for} \quad h= 2, \cdots, l.$$
In what follows, we denote by $F$ the polynomial obtained from Lemma \ref{lem10}. In view of the definition of $d_h$,  one gets $$\dfrac{d_h}{d_{h+1}} =  \dfrac{d_h \log Q_h}{d_{h+1} \log Q_{h+1}} \cdot \dfrac{\log Q_{h+1}}{\log Q_h} \geq \dfrac{2l^2}{\theta}.$$  From \eqref{eq:l}, we deduce  $$H(V(Q_h))^{d_h} \geq Q_h^{\frac{d_h \delta}{3s}} > Q_1^{\frac{d_1 \delta}{3s}}.$$ On the other hand, $d_1 + \cdots + d_l  \leq ld_1$ and  by Lemma \ref{lem10}, we infer

$$
\begin{array}{lcl}
\biggl\{ e^{d_1 + \cdots + d_l}H(F)\biggl\}^{(N-1) (3l^2 / \theta)^l} &\leq& \left( e^{ld_1}(8^Nm!)^{ld_1} \right) ^{(N-1) (3l^2 / \theta)^l}  \\\\
   
                                                                       &\leq& \left( e8^Nm! \right) ^{ld_1(N-1) (3l^2 / \theta)^l}    \\\\
                                                                       
                                                                        &\leq& \left( 9m! \right) ^{ld_1N^2 (3l^2 / \theta)^l}  \\\\
                                                                        
                                                                        &<& Q_1^{\dfrac{d_1 \delta}{3s}} 
                                                                                                                                   
\end{array}
$$
where for the last inequality, we used the fact that $l< 5 \theta^{-2} \log (2Ns)$ and  $Q_1 > (9m!)^{e^{C"}}$. Therefore   $$H(V_h)^{d_h} \geq \biggl\{ e^{d_1 + \cdots + d_l}H(F)\biggl\}^{(N-1) (3l^2 / \theta)^l}$$ and so  $l, N, \theta,  d_1, \cdots, d_l, F, V_1, \cdots, V_l$ satisfy the  conditions of Lemma \ref{lem8''}.  For $h = 1, \cdots, l,$ choose a linearly independent set of vectors $$\{g_{h1}, \cdots, g_{hN-1}\}$$ from $\Pi(Q_h)$ and let $\Gamma_h$ be the grid of size $N\theta^{-1},$ given by   $$\Gamma_h = \{g_{h1}u_1 + \cdots + g_{hN-1}u_{N-1};~~ u_i \in \mathbb{Z}; ~~ \vert u_i \vert \leq N\theta^{-1}\}.$$ By Lemma \ref{lem9},   there are $x_1 \in \Gamma_1, \cdots, x_l \in \Gamma_l$ such that for $x=(x_1, \cdots, x_l)$, we have  ${\rm Ind}_{x,d}(F) < 2l\theta$ and so  
\begin{equation}\label{eq:m}
f:= F_{\textbf{i}}(x_1, \cdots, x_l) \neq 0,
\end{equation}
 for some $\textbf{i} = (i_{11}, \cdots, i_{lN})$ with $(\textbf{i}/d)= {\rm Ind}_{x,d}(F).$ Since $g_{hj} \in \Pi(Q_h)$, it follows that $x_h \in \mathcal{O}_S^N$ for $h=1, \cdots, N.$ Further, the fact that $ F_{\textbf{i}}$ has coefficients in $\mathbb{Z}$ and the product formula yield $\prod_{v \in S}^{} \vert f \vert_v \geq 1.$ Below we show that 
$\prod_{v \in S}^{} \vert f \vert_v  < 1.$ Fix $v \in S.$ Put $u_{hiv} = \overline{l}_{iv}(x_h)$ for  $h=1, \cdots, l; ~~ i=1, \cdots, N.$ Since  $x_h \in \Gamma_h$  of size $N \theta^{-1}$, we have 
\begin{equation}\label{eq:39}
\vert u_{hiv} \vert_v \leq (N^2 \theta^{-1})^{s(v)} Q_h^{\gamma_{iv}} \quad \mbox{for} \quad h=1, \cdots, l; ~~ i=1, \cdots, N.
\end{equation}
From Lemma \ref{lem10} we may write 
\begin{equation}\label{eq:40}
f = \sum_{\textbf{j}}^{} C(\textbf{i},\textbf{j},v) u_{11v}^{p_{11}} \cdots u_{lNv}^{j_{lN}}
\end{equation}
with 
\begin{equation}\label{eq:41}
 \max_{k} \Bigg| \sum_{h=1}^{l} \frac{j_{hk}}{d_h} - \frac{l}{N} \bigg| < 3lN \theta \quad \mbox{for}\quad  C(\textbf{i},\textbf{j},v) \neq 0.
\end{equation}
The number of possibilities  of  $\textbf{j}$ is at most $(2^N)^{(d_1 + \cdots + d_l)}$ and we know that $d_1 + \cdots + d_l \leq ld_1.$ Thus, we have 
$$
\begin{array}{lcl}
\vert f \vert_v &\leq& (2^N)^{s(v) (d_1 + \cdots + d_l)} \cdot \max_{\textbf{j}} \vert C(\textbf{i},\textbf{j},v) \vert_v \vert u_{11v}\vert_v^{j_{11}} \cdots \vert u_{lNv}\vert_v^{j_{lN}} \\\\
   
                                                                       &\leq&  (2^NN^2\theta^{-1})^{s(v) (d_1 + \cdots + d_l)} \max_{\textbf{j}} \vert C(\textbf{i},\textbf{j},v) \vert_v Q_1^{{\displaystyle \sum_{k}^{} \gamma_{kv} j_{1k}}} \cdots  Q_l^{{\displaystyle \sum_{k}^{} \gamma_{kv}j_{lk}}}

\end{array}
$$
where for the last inequality, we used \eqref{eq:39}. However
\begin{equation}\label{eq:o}
\begin{array}{lcl}
{\displaystyle \sum_{h, k}^{} \gamma_{kv} \dfrac{j_{hk}}{d_h} \cdot \dfrac{d_h \log Q_h}{d_1 \log Q_1}}&\leq& {\displaystyle \sum_{k=1}^{N}  [ (\gamma_{kv}-s(v)) + s(v)] \sum_{h=1}^{l}  \dfrac{j_{hk}}{d_h}  \cdot \dfrac{d_h \log Q_h}{d_1 \log Q_1} }\\\\

                                                                       &\leq& (1+\theta) \left( {\displaystyle \sum_{k=1}^{N}  l\gamma_{kv} \left( \frac{1}{N} - 3N\theta \right) +  
                                              s(v)(3lN^2 \theta - l)  + s(v)(l+3lN^2\theta)} \right) \\\\
                                                                       
                                                                       &\leq& l (1+\theta) \left(6N^2\theta s(v)+ \left(\frac{1}{N} - 3N\theta \right) {\displaystyle  \sum_{k=1}^{N}  \gamma_{kv}} \right).

\end{array}
\end{equation}

 From the equality  $$Q_h^{{\displaystyle \sum_{k}^{} \gamma_{kv} j_{hk}}} =  Q_1^{\dfrac{\log Q_h}{\log Q_1} {\displaystyle \sum_{k}^{} \gamma_{kv} j_{hk}}}$$ together with \eqref{eq:40} and   \eqref{eq:41}, it follows that

$$
\begin{array}{lcl}
\prod_{v \in S}^{} \vert f \vert_v  &\leq&  (2^NN^2\theta^{-1})^{(ld_1)} {\displaystyle \prod_{v \in S} \max_{\textbf{j}} \vert C(i,\textbf{j},v) \vert_v} \cdot Q_1^{ld_1(1+\theta) \left(6N^2\theta + \left(\dfrac{1}{N} - 3N\theta \right) {\displaystyle  \sum_{k,v}^{}  \gamma_{kv}} \right)} \\\\

                                                                       &\leq& (2^{5N+1}N^2\theta^{-1} m! (N!)^{s+1})^{ld_1} \cdot Q_1^ {1.25 ld_1 \left( \dfrac{\delta^2}{10N^2}+ \dfrac{\delta}{5N} -  \dfrac{\delta}{N} \right)}\\\\
                                                                       
                                                                       &\leq&  (2^{5N+1}N^2\theta^{-1} m! (N!)^{s+1})^{ld_1} \cdot Q_1^ {-3ld_1\delta/5N}\\\\
                                                                       
                                                                        &\leq&  \left( 2^{5N+1}N^2\theta^{-1} m! (N!)^{s+1} \cdot Q_1^{- 3\delta/5N} \right) ^ {ld_1}\\\\
                                                                        
                                                                         &<& 1

\end{array}
$$
where for the last inequality, we used the fact that $N \leq 2^m$ and $Q_1 > (9m!)^{e^{2C"}},~~ \theta = \delta / 30N^3$ and ${\displaystyle \sum_{k,v}^{} \gamma_{kv} \leq - \delta} $ with $\delta = \epsilon/9m^3$ and $C" =  2^{11m+34}s^2 \epsilon^{-4} m^{12}$. Hence the proof of Theorem \ref{thm3} is complete.
\\\\

\noindent \textbf{Acknowledgement}. 
The author would like to thank Prof. Pieter Moree for his valuable suggestions on the first version of this paper, as well as Prof. Yann Bugeaud for the clarification on how to refine the bounds of Theorems \ref {thm} and \ref {thm1} in the previous draft.

\bibliographystyle{plain}

\end{document}